\newtheorem{theorem}{Theorem}[section]
\newtheorem{corollary}[theorem]{Corollary}
\theoremstyle{theorem}
\newtheorem{lemma}[theorem]{Lemma}
\theoremstyle{definition}
\numberwithin{equation}{section} \makeatletter
\begin{document}
\title[On the construction of l-equienergetic graphs]{ON THE CONSTRUCTION OF L-EQUIENERGETIC GRAPHS}
\author{S. Pirzada and Hilal A Ganie}

\address{Department of Mathematics \\
    University of Kashmir \\
   Srinagar, Hazratbal 190006\
   \\ India}
 \email{pirzadasd@kashmiruniversity.ac.in; sdpirzada@yahoo.co.in}
 \email{hilahmad1119kt@gmail.com}
\date{}

\subjclass[2010]{0340-6253}
\begin{abstract}
\indent\indent For a graph with $n$ vertices and $m$ edges, having Laplacian spectrum $\mu_1, \mu_2, \cdots,\mu_n$ and signless Laplacian spectrum $\mu^+_1,\mu^+_2, \cdots,\mu^+_n$, the Laplacian energy and signless Laplacian energy of $G$ are respectively, defined as $LE(G)=\sum_{i=1}^{n}|\mu_i-\frac{2m}{n}|$ and $LE^+(G)=\sum_{i=1}^{n}|\mu^+_i-\frac{2m}{n}|$. Two graphs $G_1$ and $G_2$ of same order are said to be $L$-equienergetic if $LE(G_1)=LE(G_2)$ and $Q$-equienergetic if $LE^{+}(G_1)=LE^{+}(G_2)$. The problem of constructing graphs having same Laplacian energy has been considered by Stevanovic for threshold graphs and by Liu and Liu for those graphs whose order is $n\equiv 0$ (mod 7). In general the problem of constructing $L$-equienergetic graphs from any pair of given graphs is still not solved, and  this work is an attempt in that direction. We construct sequences of non-cospectral (Laplacian, signless Laplacian) $L$-equienergetic and $Q$-equienergetic graphs from any pair of graphs having same number of vertices and edges.
\end{abstract}
\keywords{Laplacian energy, spectra, signless Laplacian energy $L$-equienergetic, $Q$-equienergetic}
\subjclass[2010]{0340-6253}
\maketitle{}

\section{Introduction}\label{sec1}
\indent Let $G$ be finite, undirected, simple graph with $n$ vertices and $m$ edges having vertex set $V(G)=\{v_1, v_2, \cdots, v_n\}$. Throughout this paper we denote such a graph by $G(n,m)$. The adjacency matrix $A=(a_{ij})$ of $G$ is a $(0, 1)$-square matrix of order $n$ whose $(i,j)$-entry is equal to one if $v_i$ is adjacent to $v_j$ and equal to zero, otherwise. The adjacency spectrum of $G$ is the spectrum of its adjacency matrix. The energy of $G$ is the sum of the absolute values of the adjacency eigenvalues of $G$ (see \cite{g}). This quantity introduced by Gutman has noteworthy chemical applications (see \cite{gp}).\\
\indent Let $D(G)={diag}(d_1, d_2, \cdots, d_n)$ be the diagonal matrix associated to $G$, where $d_i$ is the degree of vertex $v_i$. The matrices $L(G)$=$D(G)$-$A(G)$ and $L^+(G)$=$D(G)$+$A(G)$ are called Laplacian and signless Laplacian matrices and the spectrum of the matrices $L(G)$ and $L^+(G)$ are called Laplacian spectrum ($L$-spectrum) and signless Laplacian spectrum ($Q$-spectrum) of $G$, respectively. Being real symmetric, positive semi-definite matrices, let $0=\mu_n\leq\mu_{n-1}\leq\cdots\leq\mu_1$ and $0\leq\mu^+_n\leq\mu^+_{n-1}\leq\cdots\leq\mu^+_1$ be the $L$-spectrum and $Q$-spectrum of $G$, respectively. It is well known $\mu_n$=0 with multiplicity equal to number of connected components of $G$ (see \cite{f}). Fiedler \cite{f} showed that a graph $G$ is connected if and only if its second smallest Laplacian eigenvalue is positive and called this as the algebraic connectivity of the graph $G$. Also it is well known that for a bipartite graph the $L$-spectra and $Q$-spectra coincides (see \cite{cs}). The Laplacian energy of a graph $G$ as put forward by Gutman and Zhou (see \cite{gz}) is defined as $LE(G)=\sum\limits_{i=1}^{n}|\mu_i-\frac{2m}{n}|$. This quantity, which is an extension of graph-energy concept has found remarkable chemical applications beyound the molecular orbital theory of conjugated molecules (see \cite{rg}). In analogy to Laplacian energy the signless Laplacian energy of $G$ is defined as $LE^+(G)=\sum\limits_{i=1}^{n}|\mu^+_i-\frac{2m}{n}|$. Both these graph-energy extensions have been extensively studied and can be seen in the literature (see \cite{a, acg, wh, zg, z} and the references therein). It is easy to see that $tr(L(G))=\sum\limits_{i=1}^{n}\mu_i=\sum\limits_{i=1}^{n-1}\mu_i=2m$ and $tr(LE^+(G))=\sum\limits_{i=1}^{n}\mu^+_i=2m$.\\
\indent Two graphs $G_1$ and $G_2$ of same order are said to be equienergetic if $E(G_1)$=$E(G_2)$, (see \cite{b} and \cite{rgra}). In analogy to this two graphs $G_1$ and $G_2$ of same order are said to $L$-equienergetic if $LE(G_1)$=$LE(G_2)$ and $Q$-equienergetic if $LE^+(G_1)$=$LE^+(G_2)$. In \cite{ll} and \cite{st}, the families of $L$-equienergetic graphs have been constructed for a particular class of graphs. As per the existing literature the problem of constructing $L$-equienergetic graphs from a general pair of given graphs is still open and present work is aimed in this direction. In this paper, we will show how sequences of $L$-equienergetic ($Q$-equienergetic) non-cospectral (Lapalcian, signless Laplacian) graphs can be constructed from any pair of graphs having same number of vertices and edges.\\
\indent We denote the complement of graph $G$ by $\bar{G}$, the complete graph on $n$ vertices by $K_n$, the empty graph by $\bar{K_n}$ and the complete bipartite graph with cardinalities of partite sets $q$, $r$ by $K_{q,r}$. The rest of the paper is organised as, in section 1 some preliminary results which are important through out the paper are presented, in section 2 construction of families of $L$-equienergetic ($Q$-equienergetic) by using the graph operation like, union, join and complement is given and in section 3 product of graphs is used for the constrution of $L$-equienergetic ($Q$-equienergetic) graphs.

\section{Preliminaries}
In this section, we define some graph operations with terminology from (see \cite{p}) together with their $L$-spectra and $Q$-spectra (see \cite{cds}) which are used throughout the paper.\\
\indent For the graphs $G_1$ and $G_2$ with disjoint vertex sets $V(G_1)$ and $V(G_2)$ the cartesian product is a graph $G=G_1\times G_2$ with vertex set $V(G_1)\times V(G_2)$ and an edge $((u_1, v_1), (u_2, v_2))$ if and only if $u_1=u_2$ and $(v_1, v_2)$ is an edge of $G_2$ or $v_1=v_2$
and $(u_1, u_2)$ is an edge of $G_1$. Following Lemma gives the $L$-spectra ($Q$-spectra) of the cartesian product of graphs.

\begin{lemma}\cite{cds}
Let $G_1(n_1,m_1)$ and $G_2(n_2,m_2)$ be two graphs having L-spectra (Q-spectra) $\mu_1, \mu_2, \cdots, \mu_{n_1}$ and $\sigma_1, \sigma_2, \cdots, \sigma_{n_2}$, respectively, then the L-spectra  (Q-spectra) of $G=G_1\times G_2$ is $\mu_i+\sigma_j$ where $i=1,2,\cdots,n_1$ and $j=1,2,\cdots,n_2$.
\end{lemma}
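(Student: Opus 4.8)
The plan is to realize the Laplacian (and, in exactly the same way, the signless Laplacian) of the Cartesian product as a Kronecker sum of the corresponding matrices of the two factors, and then to write down an eigenbasis by hand. First I would fix a vertex ordering of $V(G_1)\times V(G_2)$, say the lexicographic one, so that an $n_1n_2\times n_1n_2$ matrix indexed by pairs can be read off in $n_1\times n_1$ blocks of size $n_2$. Unwinding the definition of the edge set of $G_1\times G_2$ then gives at once $A(G)=A(G_1)\otimes I_{n_2}+I_{n_1}\otimes A(G_2)$, where $\otimes$ is the Kronecker product, and since the degree of a vertex $(u,v)$ in $G$ equals $d_{G_1}(u)+d_{G_2}(v)$ we also have $D(G)=D(G_1)\otimes I_{n_2}+I_{n_1}\otimes D(G_2)$. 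Subtracting (respectively adding) these two identities yields
\[
L(G)=L(G_1)\otimes I_{n_2}+I_{n_1}\otimes L(G_2),\qquad
L^+(G)=L^+(G_1)\otimes I_{n_2}+I_{n_1}\otimes L^+(G_2).
\]

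Next I would invoke the spectral theorem for real symmetric matrices: choose an orthonormal basis $x_1,\dots,x_{n_1}$ of eigenvectors of $L(G_1)$ with $L(G_1)x_i=\mu_i x_i$, and an orthonormal basis $y_1,\dots,y_{n_2}$ of eigenvectors of $L(G_2)$ with $L(G_2)y_j=\sigma_j y_j$. Using the mixed-product rule $(P\otimes Q)(R\otimes S)=(PR)\otimes(QS)$ one computes
\[
\bigl(L(G_1)\otimes I_{n_2}+I_{n_1}\otimes L(G_2)\bigr)(x_i\otimes y_j)=(L(G_1)x_i)\otimes y_j+x_i\otimes(L(G_2)y_j)=(\mu_i+\sigma_j)(x_i\otimes y_j),
\]
so $x_i\otimes y_j$ is an eigenvector of $L(G)$ with eigenvalue $\mu_i+\sigma_j$; the identical computation with $L^+$ in place of $L$ handles the signless Laplacian.

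Finally I would note that the $n_1n_2$ vectors $x_i\otimes y_j$ are pairwise orthogonal, since $\langle x_i\otimes y_j,\,x_k\otimes y_l\rangle=\langle x_i,x_k\rangle\langle y_j,y_l\rangle$, hence they form a complete orthonormal system in the $n_1n_2$-dimensional space on which $L(G)$ (respectively $L^+(G)$) acts. Consequently the multiset $\{\mu_i+\sigma_j:1\le i\le n_1,\ 1\le j\le n_2\}$ exhausts the spectrum, which is the assertion of the lemma. I do not foresee a genuine difficulty here: everything after the first step is the routine Kronecker-sum argument. The one place that deserves a careful line is the opening reduction — verifying that the combinatorial description of the edges of $G_1\times G_2$ is exactly encoded by $A(G)=A(G_1)\otimes I_{n_2}+I_{n_1}\otimes A(G_2)$ for the chosen ordering — after which the rest is purely formal.
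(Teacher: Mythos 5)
Your argument is correct and complete: the paper states this lemma without proof, citing Cvetkovi\'c--Doob--Sachs, and your Kronecker-sum decomposition $L(G)=L(G_1)\otimes I_{n_2}+I_{n_1}\otimes L(G_2)$ (and likewise for $L^+$, since $D$ and $A$ both split additively over the Cartesian product) followed by the eigenbasis $x_i\otimes y_j$ is precisely the standard proof of the cited result. Nothing is missing; the completeness of the $n_1n_2$ orthonormal products $x_i\otimes y_j$ is exactly what guarantees the multiset $\{\mu_i+\sigma_j\}$ is the whole spectrum.
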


\indent The conjunction (Kronecker product) of $G_1$ and $G_2$ is a graph $G=G_1\otimes G_2$ with vertex set $V(G_1)\times V(G_2)$ and an edge $((u_1,v_1), (u_2, v_2))$ if and only if $(u_1, u_2)$ and $(v_1, v_2)$ are edges in $G_1$ and $G_2$, respectively. Following Lemma gives the $L$-spectra ($Q$-spectra) of the Kronecker product of graphs.
\begin{lemma}\cite{cds}
Let $G_1(n_1,m_1)$ and $G_2(n_2,m_2)$ be two graphs having L-spectra (Q-spectra) $\mu_1, \mu_2, \cdots, \mu_{n_1}$ and $\sigma_1, \sigma_2, \cdots, \sigma_{n_2}$, respectively, then the L-spectra  (Q-spectra) of $G=G_1\otimes G_2$ is $\mu_i\sigma_j$ where $i=1,2,\cdots,n_1$ and $j=1,2,\cdots,n_2$.
\end{lemma}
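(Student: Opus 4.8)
\medskip
\noindent\textbf{Proof proposal.} The plan is to reduce the statement to the elementary spectral theory of matrix Kronecker products. First I would read the matrices of $G=G_1\otimes G_2$ off the definition of the conjunction: ordering the vertex set $V(G_1)\times V(G_2)$ lexicographically, the rule ``$(u_1,v_1)\sim(u_2,v_2)$ iff $u_1\sim u_2$ in $G_1$ and $v_1\sim v_2$ in $G_2$'' says precisely that $A(G)=A(G_1)\otimes A(G_2)$ (Kronecker product of matrices), while the neighbours of $(u,v)$ are exactly the pairs built from a neighbour of $u$ in $G_1$ and a neighbour of $v$ in $G_2$, so that the degree of $(u,v)$ in $G$ equals $d_{G_1}(u)\,d_{G_2}(v)$ and hence $D(G)=D(G_1)\otimes D(G_2)$. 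Consequently $L(G)=D(G_1)\otimes D(G_2)-A(G_1)\otimes A(G_2)$ and $L^{+}(G)=D(G_1)\otimes D(G_2)+A(G_1)\otimes A(G_2)$.

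Next I would invoke the classical spectral fact about Kronecker products: by the mixed-product property, if $Mx=\alpha x$ and $Ny=\beta y$ then $(M\otimes N)(x\otimes y)=\alpha\beta\,(x\otimes y)$, and if $\{x_i\}_{i=1}^{n_1}$ and $\{y_j\}_{j=1}^{n_2}$ are bases of eigenvectors of $M$ and $N$ respectively, then $\{x_i\otimes y_j\}_{i,j}$ is a basis of eigenvectors of $M\otimes N$; in particular the spectrum of $M\otimes N$ is the multiset of products $\alpha_i\beta_j$. Applied to $M=A(G_1)$, $N=A(G_2)$ this computes the adjacency spectrum of the conjunction, the classical special case. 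The point is then to push the same eigenvector bookkeeping through for $L(G)$ and $L^{+}(G)$: list the $n_1n_2$ vectors $x_i\otimes y_j$, check that each is again an eigenvector of the matrix at hand, and identify the corresponding eigenvalue with $\mu_i\sigma_j$ (respectively $\mu_i^{+}\sigma_j^{+}$), tracking multiplicities so that all $n_1n_2$ eigenvalues are accounted for.

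The step I expect to be the real obstacle is precisely this last identification for the Laplacian and signless Laplacian. In contrast to the cartesian-product lemma above, $L(G)$ equals the combination $D(G_1)\otimes D(G_2)-A(G_1)\otimes A(G_2)$ rather than an honest Kronecker product $L(G_1)\otimes L(G_2)$, so the Kronecker eigenvalue theorem does not apply on the nose; one must argue more delicately that the vectors $x_i\otimes y_j$ still deliver exactly the products $\mu_i\sigma_j$, and this is the point where I would rely on the structural computation of \cite{cds}. Finally I would note that the signless Laplacian statement follows from the identical argument with each occurrence of $-A$ replaced by $+A$, which is also consistent with the coincidence of the $L$- and $Q$-spectra recorded earlier when the factors (hence the conjunction) are bipartite.
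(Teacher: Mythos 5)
Your reduction to matrix Kronecker products is set up correctly: indeed $A(G_1\otimes G_2)=A(G_1)\otimes A(G_2)$, $D(G_1\otimes G_2)=D(G_1)\otimes D(G_2)$, hence $L(G_1\otimes G_2)=D(G_1)\otimes D(G_2)-A(G_1)\otimes A(G_2)$, and the product rule for eigenvalues of $M\otimes N$ settles the adjacency case. But the step you candidly flag as ``the real obstacle'' and defer to \cite{cds} is a genuine gap, and it cannot be closed: the vectors $x_i\otimes y_j$ built from Laplacian eigenvectors are not eigenvectors of $D(G_1)\otimes D(G_2)-A(G_1)\otimes A(G_2)$ in general, and the conclusion itself is false as a statement about $L$- and $Q$-spectra. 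Take $G_1=G_2=K_2$; then $G_1\otimes G_2$ is the disjoint union of two copies of $K_2$, whose $L$-spectrum (and $Q$-spectrum) is $2,2,0,0$, whereas the products $\mu_i\sigma_j$ of the Laplacian eigenvalues $2,0$ of $K_2$ give $4,0,0,0$. Even under a regularity hypothesis the correct eigenvalues are $r_2\mu_i+r_1\sigma_j-\mu_i\sigma_j$ (with $r_1,r_2$ the degrees of regularity), not $\mu_i\sigma_j$.

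For comparison: the paper supplies no argument at all for this lemma --- it is quoted from \cite{cds} --- and the theorem actually proved there (and classically) is the adjacency-spectrum statement, exactly the special case your Kronecker argument does establish. So your instinct that $L(G_1\otimes G_2)\neq L(G_1)\otimes L(G_2)$ blocks the naive transfer was exactly right; the repair is not a more delicate eigenvector computation but a restriction of the claim (to adjacency spectra, or to a corrected eigenvalue formula under additional hypotheses such as regularity), and any later result in the paper relying on the $L$/$Q$ version of this lemma needs to be re-examined on that basis.
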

The join (complete product) of $G_1$ and $G_2$ is a graph $G=G_1\vee G_2$ with vertex set $V(G_1)\cup V(G_2)$ and an edge set consisting of all the edges of $G_1$ and $G_2$ together with the edges joining each vertex of $G_1$ with every vertex of $G_2$. The $L$-spectra of join of graphs is given by the following result.
\begin{lemma}\cite{cds}
Let $G_1(n_1,m_1)$ and $G_2(n_2,m_2)$ be two graphs having $L$-spectra respectively as $\mu_1,\mu_2,\cdots,\mu_{n_1-1},\mu_{n_1}=0$ and $\sigma_1, \sigma_2, \cdots, \sigma_{n_2-1}, \sigma_{n_2}=0$, then the $L$-spectra of $G=G_1\vee G_2$ is $n_1+n_2, n_1+\sigma_1, n_1+\sigma_2, \cdots, n_1+\sigma_{n_2-1}, n_2+\mu_1, n_2+\mu_2, \cdots, n_2+\mu_{n_1-1}, 0$.
\end{lemma}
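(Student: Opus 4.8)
The plan is to compute the spectrum of $L(G_1 \vee G_2)$ directly from the block structure of the Laplacian, producing an explicit eigenbasis. Ordering the vertices of $G = G_1 \vee G_2$ with those of $G_1$ first, and noting that forming the join raises the degree of every vertex of $G_1$ by $n_2$ and of every vertex of $G_2$ by $n_1$, the Laplacian matrix becomes
$$L(G) = \begin{pmatrix} L(G_1) + n_2 I_{n_1} & -J_{n_1 \times n_2} \\ -J_{n_2 \times n_1} & L(G_2) + n_1 I_{n_2} \end{pmatrix},$$
where $J_{p\times q}$ is the all-ones $p\times q$ matrix and $I_k$ the identity of order $k$.

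First I would handle the eigenvalues inherited from the factors. Since $L(G_1)$ is real symmetric with $L(G_1)\mathbf{1}_{n_1} = 0$, the eigenvectors for $\mu_1,\dots,\mu_{n_1-1}$ can be chosen orthogonal to $\mathbf{1}_{n_1}$; if $x$ is such a vector then $J_{n_2\times n_1}x = 0$, and a one-line block multiplication gives $L(G)\begin{pmatrix} x \\ 0 \end{pmatrix} = (\mu_i + n_2)\begin{pmatrix} x \\ 0 \end{pmatrix}$. The symmetric computation on $G_2$ gives $L(G)\begin{pmatrix} 0 \\ y \end{pmatrix} = (\sigma_j + n_1)\begin{pmatrix} 0 \\ y \end{pmatrix}$ for each eigenvector $y$ of $L(G_2)$ orthogonal to $\mathbf{1}_{n_2}$. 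This yields $n_1+n_2-2$ mutually orthogonal eigenvectors realizing the eigenvalues $n_2+\mu_i$ $(1\le i\le n_1-1)$ and $n_1+\sigma_j$ $(1\le j\le n_2-1)$.

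The remaining two eigenvalues lie on the orthogonal complement of the span of those vectors, which is exactly the two-dimensional space spanned by $\begin{pmatrix} \mathbf{1}_{n_1} \\ 0 \end{pmatrix}$ and $\begin{pmatrix} 0 \\ \mathbf{1}_{n_2} \end{pmatrix}$; this space is $L(G)$-invariant because $L(G_i)\mathbf{1}_{n_i} = 0$ and $J_{p\times q}\mathbf{1}_q = q\,\mathbf{1}_p$. Writing $L(G)$ in this basis reduces the task to diagonalizing $\begin{pmatrix} n_2 & -n_2 \\ -n_1 & n_1 \end{pmatrix}$, whose trace is $n_1+n_2$ and whose determinant is $0$; hence its eigenvalues are $0$ (with eigenvector $\mathbf{1}_{n_1+n_2}$, as expected) and $n_1+n_2$. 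Listing all $n_1+n_2$ eigenvalues found reproduces the multiset in the statement, and since we have exhibited a full orthogonal eigenbasis, nothing is missing.

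I expect the only delicate point to be the completeness bookkeeping: one must argue that the eigenvectors coming from $G_1$ and $G_2$ really can be taken orthogonal to the relevant all-ones vectors and that, together with $\begin{pmatrix} \mathbf{1}_{n_1} \\ 0 \end{pmatrix}$ and $\begin{pmatrix} 0 \\ \mathbf{1}_{n_2} \end{pmatrix}$, they span $\mathbb{R}^{n_1+n_2}$. This is precisely where the normalization in the hypothesis — that $0$ occurs as $\mu_{n_1}$ and as $\sigma_{n_2}$ — is used, and it is what turns the list of eigenvalues we have produced into the entire spectrum rather than a proper sublist.
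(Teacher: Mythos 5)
Your argument is correct: the block form of $L(G_1\vee G_2)$ is right, the vectors $\bigl(\begin{smallmatrix}x\\0\end{smallmatrix}\bigr)$ and $\bigl(\begin{smallmatrix}0\\y\end{smallmatrix}\bigr)$ with $x\perp\mathbf{1}_{n_1}$, $y\perp\mathbf{1}_{n_2}$ do kill the off-diagonal all-ones blocks and give the shifted eigenvalues $n_2+\mu_i$ and $n_1+\sigma_j$, and the restriction of $L(G)$ to the invariant two-dimensional complement has trace $n_1+n_2$ and determinant $0$, yielding the remaining eigenvalues $n_1+n_2$ and $0$; the completeness bookkeeping you flag is handled correctly (the eigenbasis of each $L(G_i)$ can be taken to contain $\mathbf{1}_{n_i}$, and this is exactly where the convention $\mu_{n_1}=\sigma_{n_2}=0$ enters, including the disconnected case where $0$ has higher multiplicity). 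Note, however, that the paper offers no proof of this statement at all: it is quoted as a known result of Cvetkovi\'c, Doob and Sachs, so there is no in-paper argument to compare against. The derivation usually given (and the one most in keeping with the paper's own toolkit) is shorter: since $G_1\vee G_2=\overline{\overline{G_1}\cup\overline{G_2}}$, one applies the complement rule (Lemma 2.5) to each $G_i$, the disjoint-union rule (Lemma 2.4), and the complement rule once more on $n_1+n_2$ vertices, which sends $n_1-\mu_i\mapsto n_2+\mu_i$, $n_2-\sigma_j\mapsto n_1+\sigma_j$, one of the two zeros to $n_1+n_2$, and keeps the last zero. Your direct eigenvector computation is more self-contained (it does not presuppose the complement lemma) and has the added benefit of exhibiting an explicit orthogonal eigenbasis, at the cost of the invariant-subspace bookkeeping; both routes are legitimate.
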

\indent The disjoint union of graphs $G_1$ and $G_2$ is a graph $G$ with vertex set and edge set, respectively, equal to union of vertex sets and edge sets of $G_1$ and $G_2$. The complement of a graph $G$ is the graph $\bar{G}$ with vertex set same as $G$ and with two vertices adjacent in $\bar{G}$ if and only if they are non-adjacent in $G$. Lemmas (2.4) and (2.5) (see\cite{cds}) gives the spectra of disjoint union of graphs and $L$-spectra of complement of a graph.
\begin{lemma}
Let $G_1$ and $G_2$ be two vertex disjoint graphs, then the spectra (adjacency, Laplacian, signless Laplacian) of graph $G=G_1\cup G_2$ is the union of spectra of $G_1$ and $G_2$.
\end{lemma}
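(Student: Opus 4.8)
The plan is to reduce the statement to the elementary fact that the spectrum of a block-diagonal matrix is the union (with multiplicities) of the spectra of its diagonal blocks. First I would fix an ordering of $V(G)=V(G_1)\cup V(G_2)$ in which all vertices of $G_1$ precede all vertices of $G_2$. With respect to this ordering, since in the disjoint union there are no edges between $V(G_1)$ and $V(G_2)$, the adjacency matrix takes the block form
\[
A(G)=\begin{pmatrix} A(G_1) & 0 \\ 0 & A(G_2)\end{pmatrix}.
\]
Moreover, the degree of each vertex of $G_i$ inside $G$ equals its degree inside $G_i$, so the degree matrix is also block-diagonal, $D(G)=\mathrm{diag}(D(G_1),D(G_2))$. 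Consequently
\[
L(G)=D(G)-A(G)=\begin{pmatrix} L(G_1) & 0 \\ 0 & L(G_2)\end{pmatrix},\qquad
L^+(G)=D(G)+A(G)=\begin{pmatrix} L^+(G_1) & 0 \\ 0 & L^+(G_2)\end{pmatrix}.
\]

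Next I would invoke the standard determinant identity for block-triangular (in particular block-diagonal) matrices: if $M=\mathrm{diag}(M_1,M_2)$ then $\det(xI-M)=\det(xI-M_1)\cdot\det(xI-M_2)$. Applying this with $M=A(G)$, $M=L(G)$ and $M=L^+(G)$ respectively shows that the characteristic polynomial of each of these matrices factors as the product of the corresponding characteristic polynomials of the two summands. Hence the multiset of roots — i.e. the adjacency, Laplacian, resp. signless Laplacian spectrum of $G$ — is exactly the union of the corresponding multiset of roots for $G_1$ and for $G_2$, which is the claim.

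There is essentially no substantive obstacle here: the only point that requires a remark is the block-diagonal structure, and that is immediate from the definition of the disjoint union (no cross edges, and degrees are preserved). One could equally argue eigenvector-wise — extending an eigenvector of a summand by zeros on the other block produces an eigenvector of $G$ with the same eigenvalue, and dimension count over both blocks accounts for all $n_1+n_2$ eigenvalues — but the characteristic-polynomial factorization is the cleanest route and handles the multiplicities automatically.
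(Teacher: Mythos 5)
Your proof is correct and complete: the block-diagonal form of $A$, $D$, and hence $L$ and $L^+$ under a vertex ordering separating $V(G_1)$ from $V(G_2)$, followed by the factorization of the characteristic polynomial of a block-diagonal matrix, is exactly the standard argument for this fact. The paper itself offers no proof of this lemma — it simply quotes it from Cvetkovi\'c, Doob and Sachs — so there is nothing to compare against; your write-up (including the remark that an eigenvector-extension argument would also work, with the characteristic-polynomial route handling multiplicities automatically) is a perfectly adequate justification of the cited result.
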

\begin{lemma}
Let $G(n,m)$ be a graph having L-spectra $\mu_1, \mu_2, \cdots, \mu_n=0$, then the L-spectra of complement $\bar{G}$ of $G$ is $n-\mu_1, n-\mu_2, \cdots, n-\mu_{n-1}, \mu_n=0$.
\end{lemma}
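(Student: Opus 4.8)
The plan is to express the Laplacian of $\bar G$ algebraically in terms of $L(G)$ and then exploit a shared eigenvector. First I would record the two elementary identities
$A(G)+A(\bar G)=J-I$ and $D(G)+D(\bar G)=(n-1)I$, where $J$ denotes the all-ones matrix of order $n$ (the first because two vertices are adjacent in exactly one of $G,\bar G$ unless they coincide; the second because $d_i(G)+d_i(\bar G)=n-1$). Subtracting the second from the first gives
\[
L(\bar G)=D(\bar G)-A(\bar G)=(n-1)I-(J-I)-\big(D(G)-A(G)\big)=nI-J-L(G).
\]

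Next, since $L(G)$ is real symmetric and positive semidefinite with every row summing to zero, the all-ones vector $\mathbf 1$ lies in its kernel; as $\mu_n=0$, we may fix an orthonormal eigenbasis $\{x_1,\dots,x_{n-1},x_n\}$ of $L(G)$ with $L(G)x_i=\mu_i x_i$ for $1\le i\le n-1$ and $x_n=\tfrac{1}{\sqrt n}\mathbf 1$. The crucial observation is that $J$ is diagonalized by this same basis: each $x_i$ with $1\le i\le n-1$ is orthogonal to $\mathbf 1$, so $Jx_i=0$, while $J\mathbf 1=n\mathbf 1$.

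Applying the formula $L(\bar G)=nI-J-L(G)$ to the basis vectors then finishes the proof immediately: for $1\le i\le n-1$,
\[
L(\bar G)x_i=n x_i-Jx_i-L(G)x_i=n x_i-0-\mu_i x_i=(n-\mu_i)x_i,
\]
and $L(\bar G)\mathbf 1=n\mathbf 1-n\mathbf 1-0=0$. Thus $\{x_1,\dots,x_{n-1},x_n\}$ is an orthonormal eigenbasis of $L(\bar G)$ with corresponding eigenvalues $n-\mu_1,\dots,n-\mu_{n-1},0$, which is exactly the asserted $L$-spectrum of $\bar G$.

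There is no genuine obstacle here; the only point meriting care is the simultaneous diagonalizability of $L(G)$ and $J$, which is guaranteed because $J$ acts as multiplication by $n$ on the line spanned by $\mathbf 1$ and as $0$ on its orthogonal complement, and $\mathbf 1$ may always be taken as the $L(G)$-eigenvector for the eigenvalue $0$. As a consistency check one may note $\sum_{i=1}^{n-1}(n-\mu_i)=n(n-1)-2m=2m(\bar G)$, in agreement with the trace identity for $L(\bar G)$.
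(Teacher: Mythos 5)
Your proof is correct: the identity $L(\bar G)=nI-J-L(G)$ combined with the observation that $\mathbf 1$ can be taken as the zero-eigenvector and that every other eigenvector of $L(G)$ is annihilated by $J$ is exactly the standard argument, and you have handled the only delicate point (simultaneous diagonalizability, including the disconnected case where $0$ has higher multiplicity) correctly. The paper itself does not prove this lemma but simply cites it from Cvetkovi\'c--Doob--Sachs, and your argument is the classical proof of that cited result, so there is nothing further to reconcile.
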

\indent If $G$ is any proper subgraph of $K_n$ then the $L$-spectra of $G$ (see \cite{fa}) is given by the following result.
\begin{lemma}
\indent Let $G(s,m)$ be a subgraph of $K_n$, then the $L$-spectra of the graph $K_n-E(G)$ is $n-\mu_1, n-\mu_2, \cdots, n-\mu_s, n ((n-s-1)-times), 0$.
\end{lemma}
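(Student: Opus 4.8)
The plan is to realize $K_n-E(G)$ as the complement, taken inside $K_n$, of a suitable $n$-vertex graph, and then to read off its $L$-spectrum from Lemma 2.4 (spectrum of a disjoint union) and Lemma 2.5 ($L$-spectrum of a complement). Identify $V(G)$ with the first $s$ vertices of $K_n$ (whose vertex set is $\{v_1,\dots,v_n\}$) and set $G'=G\cup\overline{K_{n-s}}$, the $n$-vertex graph obtained from $G$ by appending the $n-s$ isolated vertices $v_{s+1},\dots,v_n$. A pair $\{v_i,v_j\}$ is an edge of $K_n-E(G)$ exactly when it is an edge of $K_n$ but not of $G$, which is exactly when it is an edge of $\overline{G'}$; hence $K_n-E(G)=\overline{G'}$, and the whole problem reduces to finding the $L$-spectrum of $\overline{G'}$.

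First I would compute the $L$-spectrum of $G'$. The $L$-spectrum of $\overline{K_{n-s}}$ consists of $0$ with multiplicity $n-s$, so Lemma 2.4 gives that the $L$-spectrum of $G'=G\cup\overline{K_{n-s}}$ is $\mu_1,\mu_2,\dots,\mu_s$ together with $n-s$ further copies of $0$, where $\mu_1\ge\mu_2\ge\cdots\ge\mu_s=0$ is the $L$-spectrum of $G$. Written in non-increasing order this is $\mu_1,\dots,\mu_{s-1},0,\dots,0$, with $0$ occurring at least $n-s+1$ times (more if $G$ is disconnected, which is harmless below).

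Next I would apply Lemma 2.5 to $G'$: writing the $L$-spectrum of $G'$ as $\nu_1\ge\cdots\ge\nu_{n-1}\ge\nu_n=0$, the $L$-spectrum of $\overline{G'}$ is $n-\nu_1,\dots,n-\nu_{n-1},0$. The entries $\nu_1,\dots,\nu_{n-1}$ are precisely $\mu_1,\dots,\mu_{s-1}$ together with $0$ of multiplicity $n-s$, so $n-\nu_1,\dots,n-\nu_{n-1}$ are $n-\mu_1,\dots,n-\mu_{s-1}$ together with $n$ of multiplicity $n-s$. Since $\mu_s=0$ gives $n-\mu_s=n$, the $L$-spectrum of $K_n-E(G)=\overline{G'}$ is
\[ n-\mu_1,\ n-\mu_2,\ \dots,\ n-\mu_s,\ \underbrace{n,\dots,n}_{(n-s-1)\text{-times}},\ 0, \]
which is the asserted list; as a check, the number of eigenvalues is $s+(n-s-1)+1=n$.

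I do not expect a genuine obstacle here, since the argument is just two applications of results already available. The one point that demands care is the bookkeeping of the zero eigenvalue: Lemma 2.5 singles out the $0$ eigenvalue when passing to the complement, so one must keep exactly one $0$ aside and correctly account for how the surplus zeros of $G'$ (coming from the appended isolated vertices, together with any arising from $G$ being disconnected) become copies of the eigenvalue $n$ in $K_n-E(G)$.
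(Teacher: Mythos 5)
Your proof is correct: the identification $K_n-E(G)=\overline{G\cup\overline{K_{n-s}}}$ followed by Lemma 2.4 and Lemma 2.5, with the careful bookkeeping of the zero eigenvalue and the rewriting $n-\mu_s=n$, gives exactly the stated spectrum. Note that the paper offers no proof of this lemma (it is quoted from the cited work of Fath-Tabar and Ashrafi), so there is nothing to compare against; your argument is a natural self-contained justification using only the paper's own preliminary lemmas, and it is sound (assuming, as the paper's surrounding text does, that $G$ is a proper subgraph with $s\le n-1$ so that the multiplicity $n-s-1$ is nonnegative).
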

\section{Construction of $L$-equienergetic ($Q$-equienergetic) graphs by means of operations, union, join and complement}
\indent In this section we will construct sequences of $L$-equienergetic and $Q$-equienergetic non-cospectral (Laplacian, signless Laplacian) graphs from a given pair of connected graphs having same number of vertices and edges by using the well known graph operations like, disjoint union, join and complement of graphs.\\
\indent Theorems 3.1 and 3.2 give the construction of a sequence of $L$-equienergetic ($Q$-equienergetic) graph pairs which are disconnected from a given pair of connected graphs.
\begin{theorem}
Let $G_1$ and $G_2$ be two connected graphs having $L$-spectra $0=\mu_n<\mu_{n-1}\leq\cdots\leq\mu_1$ and $0=\lambda_n<\lambda_{n-1}\leq\cdots\leq\lambda_1$, respectively. For a positive integer $p$ such that  $$\frac{2m}{p+n}<\min(\mu_{n-1}, \lambda_{n-1}),$$ we have $LE(G_1\cup\bar{K_p})=LE(G_2\cup\bar{K_p})$.
\end{theorem}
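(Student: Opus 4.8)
The plan is to compute $LE(G_1\cup\bar{K_p})$ in closed form and observe that it depends only on $n$, $m$ and $p$, so that the very same computation applied to $G_2$ (which has the same $n$, the same $m$, and satisfies the symmetric inequality $\frac{2m}{n+p}<\lambda_{n-1}$) yields the identical value.

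First I would apply Lemma 2.4: the Laplacian spectrum of $G_1\cup\bar{K_p}$ is the multiset union of the $L$-spectrum of $G_1$, namely $\mu_1\geq\cdots\geq\mu_{n-1}>\mu_n=0$, together with the $L$-spectrum of $\bar{K_p}$, which is $0$ with multiplicity $p$. Hence $G_1\cup\bar{K_p}$ is a graph on $n+p$ vertices and $m$ edges whose Laplacian eigenvalues are the $n-1$ positive numbers $\mu_1,\dots,\mu_{n-1}$ together with $0$ of multiplicity $p+1$, and the quantity subtracted in the definition of $LE$ is $\frac{2m}{n+p}$.

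Next, since $\mu_{n-1}$ is the smallest of the positive eigenvalues $\mu_1,\dots,\mu_{n-1}$, the hypothesis $\frac{2m}{n+p}<\min(\mu_{n-1},\lambda_{n-1})$ gives $\frac{2m}{n+p}<\mu_i$ for every $i\leq n-1$, so that $\bigl|\mu_i-\frac{2m}{n+p}\bigr|=\mu_i-\frac{2m}{n+p}$ for these indices, while $\bigl|0-\frac{2m}{n+p}\bigr|=\frac{2m}{n+p}$ for each of the $p+1$ zero eigenvalues. Summing and using $\sum_{i=1}^{n-1}\mu_i=tr(L(G_1))=2m$ I obtain
$$LE(G_1\cup\bar{K_p})=\Bigl(2m-(n-1)\tfrac{2m}{n+p}\Bigr)+(p+1)\tfrac{2m}{n+p}=\frac{4m(p+1)}{n+p}.$$
The right-hand side involves only $n$, $m$ and $p$. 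Because $G_2$ shares the same $n$ and $m$, and the hypothesis also forces $\frac{2m}{n+p}<\lambda_i$ for all $i\leq n-1$, the identical computation gives $LE(G_2\cup\bar{K_p})=\frac{4m(p+1)}{n+p}$, establishing the equality.

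There is no real obstacle here; the only points needing care are accounting for the multiplicity $p+1$ (not $p$) of the zero eigenvalue after the union, and noticing that the single inequality in the hypothesis, placed on the algebraic connectivities $\mu_{n-1}$ and $\lambda_{n-1}$, simultaneously controls all $n-1$ positive eigenvalues of each graph. It is worth remarking that the argument in fact shows that $LE(G\cup\bar{K_p})$ is the same for every connected $G(n,m)$ with algebraic connectivity exceeding $\frac{2m}{n+p}$, which is exactly what produces the promised sequences of $L$-equienergetic pairs as $p$ ranges over admissible values.
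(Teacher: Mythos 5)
Your proof is correct and follows essentially the same route as the paper: apply Lemma 2.4 to get the spectrum $\mu_1,\dots,\mu_{n-1}$ together with $0$ of multiplicity $p+1$, use the hypothesis on the algebraic connectivity to remove the absolute values, and sum via $\sum_{i=1}^{n-1}\mu_i=2m$ to obtain an expression depending only on $n$, $m$, $p$. Your simplified value $\frac{4m(p+1)}{n+p}$ equals $2m+(p-n+2)\frac{2m}{p+n}$, which in fact exposes a harmless sign slip in the paper's displayed constant $2m+(p-n-2)\frac{2m}{p+n}$; the equality $LE(G_1\cup\bar{K_p})=LE(G_2\cup\bar{K_p})$ is unaffected.
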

\begin{proof}
By Lemma 2.4, the $L$-spectra of the graphs $G_1\cup \bar{K_p}$ and $G_2\cup \bar{K_p}$ are respectively $\mu_1, \mu_2, \cdots, \mu_{n-1}, 0 ((p+1)-times)$ and $\lambda_1, \lambda_2, \cdots, \lambda_{n-1}, 0 ((p+1)-times)$, with average vertex degree $\dfrac{2m^{\prime}}{n^{\prime}}=\dfrac{2m}{p+n}.$
Therefore,
\begin{align*}
LE(G_1\cup\bar{K_p})&=\sum\limits_{i=1}^{n-1}|\mu_i-\frac{2m^{\prime}}{n^{\prime}}|+(p+1)|0-\frac{2m^{\prime}}{n^{\prime}}|\\&
=\sum\limits_{i=1}^{n-1}(\mu_i-\frac{2m}{n+p})+(p+1)\frac{2m}{p+n}\\&=2m+(p-n-2)\frac{2m}{p+n}.
\end{align*}
Also,
\begin{align*}
LE(G_2\cup\bar{K_p})&=\sum\limits_{i=1}^{n-1}|\lambda_i-\frac{2m^{\prime}}{n^{\prime}}|+(p+1)|0-\frac{2m^{\prime}}{n^{\prime}}|\\&
=\sum\limits_{i=1}^{n-1}(\lambda_i-\frac{2m}{n+p})+(p+1)\frac{2m}{p+n}\\&=2m+(p-n-2)\frac{2m}{p+n}.
\end{align*}
Clearly $LE(G_1\cup\bar{K_p})=LE(G_2\cup\bar{K_p})$.
\end{proof}
\indent A result similar to Theorem 3.1 can be put forward for the graphs $G_1$ and $G_2$ for the $Q$-spectra in the following way.
\begin{theorem}
Let $G_1$ and $G_2$ be two connected graphs having $Q$-spectra $0<\mu^+_n<\mu^+_{n-1}\leq\cdots\leq\mu^+_1$ and $0<\lambda^+_n<\lambda^+_{n-1}\leq\cdots\leq\lambda^+_1$, respectively. For a positive integer $p$ such that  $$\frac{2m}{p+n}<\min(\mu^+_{n-1}, \lambda^+_{n-1}),$$ we have  $LE^+(G_1\cup\bar{K_p})=LE^+(G_2\cup\bar{K_p})$.
\end{theorem}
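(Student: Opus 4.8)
The plan is to transcribe the proof of Theorem 3.1, substituting signless Laplacian data for Laplacian data. First I would note that the empty graph $\bar{K_p}$ has no edges, so $D(\bar{K_p})=A(\bar{K_p})=0$, whence $L^+(\bar{K_p})=0$ and its $Q$-spectrum is $0$ with multiplicity $p$. Applying Lemma 2.4 to the disjoint unions, the $Q$-spectrum of $G_1\cup\bar{K_p}$ consists of $\mu^+_1,\mu^+_2,\dots,\mu^+_n$ together with $p$ copies of $0$, and that of $G_2\cup\bar{K_p}$ consists of $\lambda^+_1,\lambda^+_2,\dots,\lambda^+_n$ together with $p$ copies of $0$. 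Both of these graphs have $n'=n+p$ vertices and $m'=m$ edges, so the quantity subtracted in $LE^+$ is in each case $\frac{2m'}{n'}=\frac{2m}{n+p}$.

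Next I would open the absolute values. Using the hypothesis on $p$ (together with $G_1,G_2$ connected and non-bipartite, so that their smallest $Q$-eigenvalues are strictly positive) to guarantee that every positive $Q$-eigenvalue of $G_i$ exceeds $\frac{2m}{n+p}$, one gets $\bigl|\mu^+_i-\frac{2m}{n+p}\bigr|=\mu^+_i-\frac{2m}{n+p}$ for $i=1,\dots,n$ and $\bigl|0-\frac{2m}{n+p}\bigr|=\frac{2m}{n+p}$. Summing and invoking the trace identity $\sum_{i=1}^{n}\mu^+_i=tr(L^+(G_1))=2m$ gives
\[
LE^+(G_1\cup\bar{K_p})=\sum_{i=1}^{n}\Bigl(\mu^+_i-\frac{2m}{n+p}\Bigr)+p\cdot\frac{2m}{n+p}=2m+(p-n)\frac{2m}{n+p},
\]
an expression depending only on $m$, $n$ and $p$. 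Repeating the computation verbatim with the $\lambda^+_i$ — which uses nothing about $G_2$ beyond $\sum_{i=1}^{n}\lambda^+_i=2m$ and the same multiplicity $p$ of the zero eigenvalue — yields $LE^+(G_2\cup\bar{K_p})=2m+(p-n)\frac{2m}{n+p}$ as well, so the two are equal.

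The one genuinely new point, compared with Theorem 3.1, and the step I expect to be the main obstacle, is precisely the sign resolution above. In Theorem 3.1 connectivity forces a single zero eigenvalue, leaving exactly $n-1$ positive Laplacian eigenvalues, each beaten by $\frac{2m}{n+p}$ thanks to the condition on $\mu_{n-1}$. In the signless setting the smallest eigenvalue $\mu^+_n$ is itself positive and must be shown to exceed $\frac{2m}{n+p}$; the condition as literally written bounds $\frac{2m}{p+n}$ only by $\mu^+_{n-1}$, so one should really read the hypothesis as $\frac{2m}{p+n}<\min(\mu^+_n,\lambda^+_n)$ — equivalently, take $p$ large enough, which is permissible precisely because $\mu^+_n,\lambda^+_n>0$. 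Should $\mu^+_n\le\frac{2m}{n+p}$, that term flips sign and $LE^+(G_1\cup\bar{K_p})$ picks up an extra $2\bigl(\frac{2m}{n+p}-\mu^+_n\bigr)$, so equality would then also require $\mu^+_n=\lambda^+_n$. Once the threshold dominates all nonzero eigenvalues of both graphs, the trace identity finishes the argument with no further work.
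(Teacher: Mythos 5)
Your argument is correct and coincides with the paper's own treatment of this theorem, which simply asserts that the computation of Theorem 3.1 carries over to the $Q$-spectrum; your evaluation $LE^+(G_i\cup\bar{K_p})=2m+(p-n)\frac{2m}{p+n}$ is exactly what that computation gives, using only $\sum_i\mu^+_i=\sum_i\lambda^+_i=2m$ and the $p$ zero eigenvalues contributed by $\bar{K_p}$. Your side remark is also apt: since a connected non-bipartite graph has no zero $Q$-eigenvalue, the sign of $\mu^+_n-\frac{2m}{p+n}$ is not controlled by the stated bound against $\mu^+_{n-1}$, so the hypothesis should really be $\frac{2m}{p+n}<\min(\mu^+_n,\lambda^+_n)$ (attainable by taking $p$ large, as you note), a point the paper glosses over when it says the proof is the same as Theorem 3.1.
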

\begin{proof}
The proof follows by the same argument as in Theorem 3.1.
\end{proof}
\indent For any two connected graphs with same number of vertices and edges, the next result gives the construction of connected graphs having the same $L$-energy.
\begin{theorem}
Let $G_1$ and $G_2$ be two connected graphs having $L$-spectra respectively $0=\mu_n<\mu_{n-1}\leq\cdots\leq\mu_1$ and $0=\lambda_n<\lambda_{n-1}\leq\cdots\leq\lambda_1$, with algebraic connectivity greater than one. For a positive integer $p$ such that  $$\frac{2m}{p+n}<\min(\mu_{n-1}-1, \lambda_{n-1}-1),$$ we have $LE(\bar{G_1}\vee K_p)$=$LE(\bar{G_2}\vee K_p)$.
\end{theorem}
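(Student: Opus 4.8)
The plan is to write the Laplacian spectrum of $\bar{G_i}\vee K_p$ down explicitly and to verify that $LE(\bar{G_i}\vee K_p)$ is a function of $n$, $m$ and $p$ alone, hence the same for $i=1$ and $i=2$ (recall $G_1$ and $G_2$ are assumed to have the same number $n$ of vertices and the same number $m$ of edges). By Lemma 2.5 the $L$-spectrum of $\bar{G_1}$ is $n-\mu_1,\dots,n-\mu_{n-1},0$, and the $L$-spectrum of $K_p$ is $p$ with multiplicity $p-1$ together with a single $0$; likewise for $\bar{G_2}$ with the $\lambda_j$ in place of the $\mu_j$. Applying Lemma 2.3 to the join with $n_1=n$ and $n_2=p$, the $L$-spectrum of $\bar{G_1}\vee K_p$ is
\[
\underbrace{n+p,\ \dots,\ n+p}_{p\ \text{times}},\quad p+n-\mu_1,\ p+n-\mu_2,\ \dots,\ p+n-\mu_{n-1},\quad 0 .
\]

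Next I would record that $H_1:=\bar{G_1}\vee K_p$ has $n'=n+p$ vertices and $m'=\binom{n}{2}-m+\binom{p}{2}+np$ edges, so that
\[
\frac{2m'}{n'}=\frac{(n+p)(n+p-1)-2m}{n+p}=n+p-1-\frac{2m}{n+p},
\]
and the same formulas hold for $H_2:=\bar{G_2}\vee K_p$ since $G_1$ and $G_2$ share $n$ and $m$.

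Finally I would compute $LE(H_1)=\sum\bigl|\,\text{eigenvalue}-\tfrac{2m'}{n'}\,\bigr|$ eigenvalue by eigenvalue. Each eigenvalue $n+p$ contributes $1+\tfrac{2m}{n+p}$; the eigenvalue $0$ contributes $n+p-1-\tfrac{2m}{n+p}$, which is positive because $\tfrac{2m}{n+p}<\mu_{n-1}-1\le n-1$; and each $p+n-\mu_i$ contributes $\bigl|\,1+\tfrac{2m}{n+p}-\mu_i\,\bigr|$, which equals $\mu_i-1-\tfrac{2m}{n+p}$ because the hypothesis $\tfrac{2m}{p+n}<\mu_{n-1}-1$ and $\mu_i\ge\mu_{n-1}$ give $1+\tfrac{2m}{n+p}<\mu_i$. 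Adding these and using $\sum_{i=1}^{n-1}\mu_i=2m$ yields
\[
LE(H_1)=p\Bigl(1+\tfrac{2m}{n+p}\Bigr)+\Bigl(n+p-1-\tfrac{2m}{n+p}\Bigr)+\sum_{i=1}^{n-1}\Bigl(\mu_i-1-\tfrac{2m}{n+p}\Bigr)=2m+2p+\frac{2m(p-n)}{n+p},
\]
which does not involve the individual $\mu_i$. The identical calculation for $H_2$ gives the same value, so $LE(\bar{G_1}\vee K_p)=LE(\bar{G_2}\vee K_p)$.

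The only delicate point is the sign bookkeeping inside the absolute values: the hypothesis that the algebraic connectivity exceeds $1$, together with the bound on $p$, is exactly what guarantees that every eigenvalue $p+n-\mu_i$, as well as $0$, lies strictly below the average degree $\tfrac{2m'}{n'}$, so that each absolute value opens with a definite sign. Once that is settled, the conclusion follows from the trace identity $\sum_{i=1}^{n-1}\mu_i=2m$ and elementary arithmetic.
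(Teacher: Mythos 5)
Your proof is correct and follows essentially the same route as the paper: use Lemmas 2.3 and 2.5 to write the $L$-spectrum of $\bar{G_i}\vee K_p$ as $(n+p)$ with multiplicity $p$, $p+n-\mu_i$ for $i=1,\dots,n-1$, and $0$, compute the average degree $p+n-1-\frac{2m}{p+n}$, use the hypothesis $\frac{2m}{p+n}<\mu_{n-1}-1$ to fix the signs inside the absolute values, and sum using $\sum_{i=1}^{n-1}\mu_i=2m$. Incidentally, your closed form $2m+2p+\frac{2m(p-n)}{n+p}$ is the accurate value; the paper's displayed simplification $(n-p)\frac{2m'}{n'}+(p+n)(p-n+1)$ drops a $2m$ term, but since the same expression is obtained for both graphs this does not affect the equienergeticity conclusion.
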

\begin{proof}
The $L$-spectra of $K_p$ is $0, p ((p-1)-times)$, therefore by Lemmas (2.3, 2.5) it follows that the $L$-spectra of graphs $\bar{G_1}\vee K_p$ and $\bar{G_2}\vee K_p$ is $p+n-\mu_1, p+n-\mu_2, \cdots, p+n-\mu_{n-1}, (p+n) (p-times), 0$ and $p+n-\lambda_1, p+n-\lambda_2, \cdots, p+n-\lambda_{n-1}, (p+n) (p-times), 0$, respectively, with average vertex degree $\dfrac{2m^{\prime}}{n^{\prime}}=p+n-1-\dfrac{2m}{p+n}.$ So, for $i=1, 2, \cdots, n-1$, we have
$$p+n-\mu_i-\dfrac{2m^{\prime}}{n^{\prime}}=-(\mu_i-1-\dfrac{2m}{p+n})\leq0.$$
Similarly, $p+n-\lambda_i-\dfrac{2m^{\prime}}{n^{\prime}}\leq0.$ Therefore,
\begin{align*}
LE(\bar{G_1}\vee K_p)&=\sum\limits_{i=1}^{n-1}|p+n-\mu_i-\frac{2m^{\prime}}{n^{\prime}}|+p|p+n-\frac{2m^{\prime}}{n^{\prime}}|+|0-\dfrac{2m^{\prime}}{n^{\prime}}|\\&
=(n-p)(\dfrac{2m^{\prime}}{n^{\prime}})+(p+n)(p-n+1).
\end{align*}
Also,
\begin{align*}
LE(\bar{G_2}\vee K_p)&=\sum\limits_{i=1}^{n-1}|p+n-\lambda_i-\frac{2m^{\prime}}{n^{\prime}}|+p|p+n-\frac{2m^{\prime}}{n^{\prime}}|+|0-\dfrac{2m^{\prime}}{n^{\prime}}|\\&
=(n-p)(\dfrac{2m^{\prime}}{n^{\prime}})+(p+n)(p-n+1).
\end{align*}
Clearly, $LE(\bar{G_1}\vee K_p)$=$LE(\bar{G_2}\vee K_p)$.
\end{proof}
\indent Since the graph $G\vee K_p$ is the complement of the graph $G\cup\bar {K_p}$, we combine the Theorem 3.3 and 3.4 to obtain the following result.
\begin{theorem}
Let $G_1$ and $G_2$ be two connected graphs having $L$-spectra respectively$0=\mu_n<\mu_{n-1}\leq\cdots\leq\mu_1$ and $0=\lambda_n<\lambda_{n-1}\leq\cdots\leq\lambda_1$, with algebraic connectivity greater than one. For a positive integer $p$ such that  $$\frac{2m}{p+n}<\min(\mu_{n-1}-1, \lambda_{n-1}-1),$$ we have $LE(\bar{G_1}\vee K_p)$=$LE(\bar{G_2}\vee K_p)$ and $LE(G_1\cup\bar{K_p})=LE(G_2\cup\bar{K_p})$.
\end{theorem}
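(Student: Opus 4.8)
The plan is to obtain this statement simply by combining the two results already established: the union result (Theorem 3.1) for the equality $LE(G_1\cup\bar{K_p})=LE(G_2\cup\bar{K_p})$, and the join result (Theorem 3.3) for the equality $LE(\bar{G_1}\vee K_p)=LE(\bar{G_2}\vee K_p)$. The only point that needs attention is that the single hypothesis imposed here is strong enough to invoke both of those theorems; once this is checked, nothing further has to be computed, since the relevant $L$-spectra and their averages were already determined in the proofs of Theorems 3.1 and 3.3.

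For the join equality there is essentially nothing to do: the hypotheses of the present statement, namely that $G_1$ and $G_2$ are connected with algebraic connectivity greater than one and that $\frac{2m}{p+n}<\min(\mu_{n-1}-1,\lambda_{n-1}-1)$, are exactly the hypotheses of Theorem 3.3, so that theorem yields $LE(\bar{G_1}\vee K_p)=LE(\bar{G_2}\vee K_p)$ at once, with the common value exhibited in its proof.

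For the union equality I would note the elementary inequality $\min(\mu_{n-1}-1,\lambda_{n-1}-1)=\min(\mu_{n-1},\lambda_{n-1})-1<\min(\mu_{n-1},\lambda_{n-1})$. Hence the assumed bound $\frac{2m}{p+n}<\min(\mu_{n-1}-1,\lambda_{n-1}-1)$ immediately forces $\frac{2m}{p+n}<\min(\mu_{n-1},\lambda_{n-1})$, which is precisely the hypothesis of Theorem 3.1. Applying Theorem 3.1 then gives $LE(G_1\cup\bar{K_p})=LE(G_2\cup\bar{K_p})$, completing the argument.

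I do not expect any genuine obstacle: the whole content of the statement is already contained in Theorems 3.1 and 3.3, and the proof reduces to the observation that the threshold $\min(\mu_{n-1}-1,\lambda_{n-1}-1)$ needed for the join construction is smaller than the threshold $\min(\mu_{n-1},\lambda_{n-1})$ needed for the union construction. The reason for packaging the two equalities together is the remark made just before the statement: $\bar{G_i}\vee K_p$ is the complement of $G_i\cup\bar{K_p}$, both graphs having $n+p$ vertices, so for each admissible $p$ one obtains simultaneously a disconnected $L$-equienergetic pair and the connected $L$-equienergetic pair formed by their complements.
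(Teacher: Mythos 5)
Your proposal is correct and matches the paper's own treatment: the paper gives no separate proof, merely remarking that since $\bar{G_i}\vee K_p$ is the complement of $G_i\cup\bar{K_p}$ the statement follows by combining the earlier union and join theorems. Your only added (and valid) observation—that the threshold $\min(\mu_{n-1}-1,\lambda_{n-1}-1)$ is smaller than $\min(\mu_{n-1},\lambda_{n-1})$, so the single hypothesis suffices for both—is exactly the implicit step the paper relies on.
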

\begin{corollary}
Let $G_1(s,m)$ and $G_2(s,m)$ be two connected proper subgraphs of the complete graph $K_n$ having $L$-spectra $0=\mu_s<\mu_{s-1}\leq\cdots\leq\mu_1$ and $0=\lambda_s<\lambda_{s-1}\leq\cdots\leq\lambda_1$, respectively, with algebraic connectivities greater than one. For a positive integer $p$ such that  $$\frac{2m}{p+n}<\min(\mu_{s-1}-1, \lambda_{s-1}-1),$$ we have $LE(\tilde{G_1})$=$LE(\tilde{G_2})$, where $\tilde{G_i}=(K_n-E(G_i))\vee K_p$ for $i=1, 2$.
\end{corollary}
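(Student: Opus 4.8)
The plan is to observe that $\tilde{G_i}$ is already of the form handled in Theorem 3.4, so the corollary becomes a mere change of parameters. The key identity is
\[
K_n-E(G_i)=\bar{G_i}\vee K_{n-s},
\]
where $\bar{G_i}$ denotes the complement of $G_i$ regarded as a graph on its own $s$ vertices: inside $V(G_i)$ the graph $K_n-E(G_i)$ induces $K_s-E(G_i)=\bar{G_i}$, inside the remaining $n-s$ vertices it induces $K_{n-s}$, and every one of the $s(n-s)$ cross pairs is an edge. Since the join is associative and $K_{n-s}\vee K_p=K_{n-s+p}$,
\[
\tilde{G_i}=(K_n-E(G_i))\vee K_p=\bar{G_i}\vee K_{n-s}\vee K_p=\bar{G_i}\vee K_{p+n-s}.
\]

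Write $q:=p+n-s$, a positive integer since $p\ge1$ and $s\le n$. I would then apply Theorem 3.4 to the connected graphs $G_1,G_2$ on $s$ vertices, with $L$-spectra $0=\mu_s<\mu_{s-1}\le\cdots\le\mu_1$ and $0=\lambda_s<\lambda_{s-1}\le\cdots\le\lambda_1$, both of algebraic connectivity greater than one, together with the integer $q$. The hypothesis that Theorem 3.4 requires is $\frac{2m}{q+s}<\min(\mu_{s-1}-1,\lambda_{s-1}-1)$; but $q+s=p+n$, so this is precisely the assumption $\frac{2m}{p+n}<\min(\mu_{s-1}-1,\lambda_{s-1}-1)$ of the corollary. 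Theorem 3.4 then gives $LE(\bar{G_1}\vee K_q)=LE(\bar{G_2}\vee K_q)$, that is, $LE(\tilde{G_1})=LE(\tilde{G_2})$.

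A direct route --- presumably the intended one, since Lemma 2.6 is stated just before the corollary --- works as well. By Lemma 2.6 the $L$-spectrum of $K_n-E(G_i)$ is $n-\mu_1,\ldots,n-\mu_s,\ n\ ((n-s-1)\text{-times}),\ 0$; combining it through the join formula of Lemma 2.3 with the $L$-spectrum $p\ ((p-1)\text{-times}),\,0$ of $K_p$ gives the $L$-spectrum of $\tilde{G_i}$ as $p+n$ (with multiplicity $p+n-s$), $p+n-\mu_i$ for $i=1,\ldots,s-1$, and $0$, and the average vertex degree is $\frac{2m'}{n'}=(p+n-1)-\frac{2m}{p+n}$. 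Then $p+n-\frac{2m'}{n'}=1+\frac{2m}{p+n}>0$, while for $1\le i\le s-1$ one has $p+n-\mu_i-\frac{2m'}{n'}=-\bigl(\mu_i-1-\frac{2m}{p+n}\bigr)\le0$, the last inequality being exactly where the numerical hypothesis is used (via $\mu_i\ge\mu_{s-1}$). Summing the absolute values and using $\sum_{i=1}^{s-1}\mu_i=2m$ collapses $LE(\tilde{G_i})$ to an expression in $n,p,s,m$ only, and the identical computation for $G_2$ closes the argument. There is no genuine obstacle: the only care needed is the bookkeeping of multiplicities when chaining Lemmas 2.6 and 2.3, and checking that the hypothesis on $\frac{2m}{p+n}$ forces all the $p+n-\mu_i$ to the same side of the average vertex degree --- which is exactly what makes $LE(\tilde{G_i})$ independent of the internal structure of $G_i$.
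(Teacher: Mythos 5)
Your second (``direct'') route is exactly the paper's own proof: the paper disposes of this corollary by citing the argument of the join theorem together with Lemma 2.6, i.e.\ the spectrum $n-\mu_1,\dots,n-\mu_s,\ n\ ((n-s-1)\text{-times}),\ 0$ of $K_n-E(G_i)$ and the average degree $p+n-1-\frac{2m}{p+n}$, which is precisely your computation, and your sign analysis and bookkeeping are correct. Your first route --- rewriting $K_n-E(G_i)=\bar{G_i}\vee K_{n-s}$, using associativity of the join to get $\tilde{G_i}=\bar{G_i}\vee K_{p+n-s}$, and invoking the earlier join theorem with $q=p+n-s$ (noting $q+s=p+n$, so the numerical hypothesis transfers verbatim) --- is a valid alternative the paper does not make explicit; it buys you the corollary as a literal special case of the earlier theorem with no new computation, at the cost only of checking the structural identity and the trivial edge cases $s=n$, $q\geq 1$. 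Either way the argument is sound.
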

\begin{proof}
The result can be proved by using the same argument as in Theorem 3.4 and the fact that the $L$-spectra of $K_n-E(G_1)$ and $K_n-E(G_2)$ are $n-\mu_1, n-\mu_2, \cdots, n-\mu_s, n ((n-s-1)-times), 0$ and $n-\lambda_1, n-\lambda_2, \cdots, n-\lambda_s, n ((n-s-1)-times), 0$, respectively, with average vertex degree $\dfrac{2m^{\prime}}{n^{\prime}}=p+n-1-\dfrac{2m}{p+n}$.
\end{proof}
\indent If $k$ is the first value of positive integer $p$ satisfying the condition given in the Theorems 3.1 to 3.5, then every integer greater than $k$ also satisfies this condition, hence we will obtain sequences of graphs having same $L$-energy ($Q$-energy). In Theorems 3.6 and 3.8, we present another way of construction of graphs having the same $L$-energy from a given pair of connected graphs.
\begin{theorem}
Let $G_1$ and $G_2$ be two connected graphs having $L$-spectra respectively $0=\mu_n<\mu_{n-1}\leq\cdots\leq\mu_1$ and $0=\lambda_n<\lambda_{n-1}\leq\cdots\leq\lambda_1$,.
For a positive integer $p\geq n$ such that
$$\frac{2m}{p+n}<\min(\mu_{n-1}, \lambda_{n-1}),$$ we have $LE(G_1\vee \bar{K_p})$=$LE(G_2\vee \bar{K_p}).$
\end{theorem}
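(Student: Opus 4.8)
The plan is to write down the $L$-spectrum of $G_i\vee\bar{K_p}$ explicitly, locate the average degree among its eigenvalues, and then evaluate $LE$ as a sum of the resulting absolute values, arriving at a quantity that depends only on $n$, $m$ and $p$.

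First I would use that $\bar{K_p}$ has no edges, so $L(\bar{K_p})$ is the zero matrix and the $L$-spectrum of $\bar{K_p}$ is $0$ with multiplicity $p$. Feeding $G_1$ (on $n$ vertices) and $\bar{K_p}$ (on $p$ vertices) into Lemma 2.3, the $L$-spectrum of $G_1\vee\bar{K_p}$ becomes
$$n+p,\qquad p+\mu_1,\ p+\mu_2,\ \dots,\ p+\mu_{n-1},\qquad n\ \big((p-1)\text{-times}\big),\qquad 0,$$
and the analogous list with $\lambda_i$ in place of $\mu_i$ for $G_2\vee\bar{K_p}$. Next I would note that $G_1\vee\bar{K_p}$ has $n'=n+p$ vertices and $m'=m+np$ edges, so its average vertex degree is $d:=\frac{2m'}{n'}=\frac{2(m+np)}{n+p}$.

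The hard part is the sign analysis of the terms $x-d$ as $x$ runs over the spectrum. Writing $p-d=\frac{p^2-np-2m}{n+p}$, one gets $p+\mu_i-d=\mu_i-\frac{2m}{n+p}+\frac{p(p-n)}{n+p}$, which is strictly positive for every $i$: the last summand is $\ge 0$ because $p\ge n$, while $\mu_i\ge\mu_{n-1}>\frac{2m}{n+p}$ by hypothesis. Likewise $n-d=\frac{n(n-p)-2m}{n+p}<0$ and $-d<0$, while $n+p-d=\frac{n^2+p^2-2m}{n+p}>0$ since $2m\le n(n-1)<n^2+p^2$. With all signs settled,
\begin{align*}
LE(G_1\vee\bar{K_p})=(n+p-d)+\sum_{i=1}^{n-1}\big(p+\mu_i-d\big)+(p-1)(d-n)+d,
\end{align*}
and substituting $\sum_{i=1}^{n-1}\mu_i=2m$ and collecting terms (the coefficient of $d$ turning out to be $p-n$ and the remaining constant $2m+2n$) gives
$$LE(G_1\vee\bar{K_p})=2m+2n+(p-n)\,\frac{2(m+np)}{n+p}.$$

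Since this expression contains only $m$, $n$ and $p$, and since the very same sign analysis applies verbatim to $G_2\vee\bar{K_p}$ (using $\frac{2m}{p+n}<\lambda_{n-1}$ for the middle block of eigenvalues), the computation for $G_2$ produces the identical value, whence $LE(G_1\vee\bar{K_p})=LE(G_2\vee\bar{K_p})$. The only step that really needs care is the placement of the eigenvalues $p+\mu_i$ relative to $d$; this is precisely where both hypotheses $p\ge n$ and $\frac{2m}{p+n}<\min(\mu_{n-1},\lambda_{n-1})$ are used, and once that is pinned down the collapse of the sum to a function of $m,n,p$ is routine bookkeeping.
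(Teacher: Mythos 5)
Your proof is correct and follows essentially the same route as the paper: apply the join-spectrum lemma to get the eigenvalues $n+p$, $p+\mu_i$, $n$ (with multiplicity $p-1$) and $0$, use $p\ge n$ together with $\frac{2m}{p+n}<\min(\mu_{n-1},\lambda_{n-1})$ to fix the signs relative to the average degree, and collapse the sum to $2(m+n)+(p-n)\frac{2(m+np)}{n+p}$, which matches the paper's value $(p-n)\frac{2m'}{n'}+2(m+n)$. Your sign analysis is in fact slightly more explicit than the paper's (and your spectrum listing corrects a harmless multiplicity typo in the paper's proof), but the argument is the same.
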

\begin{proof}
The $L$-spectra of the graphs $G_1\vee \bar{K_p}$ and $G_2\vee \bar{K_p}$ (by Lemma 2.3) are
$p+\mu_1, p+\mu_2, \cdots, p+\mu_{n-1}, (p+n) ((p-1)-times), 0$ and
$p+\lambda_1, p+\lambda_2, \cdots, p+\lambda_{n-1}, (p+n) ((p-1)-times), 0$
respectively, with average vertex degree $\dfrac{2m^{\prime}}{n^{\prime}}=\dfrac{2m}{p+n}+\dfrac{2pn}{p+n}.$ So, for $i=1, 2, \cdots, n-1$, we have\\
$$p+\mu_i-\dfrac{2m^{\prime}}{n^{\prime}}=\dfrac{p(p-n)}{p+n}+\mu_i-\dfrac{2m}{p+n}\geq 0.$$
Similarly, $p+\lambda_i-\dfrac{2m^{\prime}}{n^{\prime}}\geq 0.$ Therefore,
\begin{align*}
LE(G_1\vee \bar{K_p})&=\sum\limits_{i=1}^{n-1}|p+\mu_i-\frac{2m^{\prime}}{n^{\prime}}|+(p-1)|n-\frac{2m^{\prime}}{n^{\prime}}|\\&+|p+n-\dfrac{2m^{\prime}}{n^{\prime}}|+|0-\dfrac{2m^{\prime}}{n^{\prime}}|\\&
=(p-n)(\dfrac{2m^{\prime}}{n^{\prime}})+2(m+n).
\end{align*}
Also,
\begin{align*}
LE(G_2\vee \bar{K_p})&=\sum\limits_{i=1}^{n-1}|p+\lambda_i-\frac{2m^{\prime}}{n^{\prime}}|+(p-1)|n-\frac{2m^{\prime}}{n^{\prime}}|\\&+|p+n-\dfrac{2m^{\prime}}{n^{\prime}}|+|0-\dfrac{2m^{\prime}}{n^{\prime}}|\\&
=(p-n)(\dfrac{2m^{\prime}}{n^{\prime}})+2(m+n).
\end{align*}
Clearly, $LE(G_1\vee \bar{K_p})$=$LE(G_2\vee \bar{K_p})$.
\end{proof}
\indent If in Theorem 3.6, the complete graph $K_p$ is replaced by the graph $K_{p,p}$, we will obtain another family of $L$-equienergetic graphs as seen in the following observation.
\begin{corollary}
Let $G_1$ and $G_2$ be two connected graphs having $L$-spectra respectively $0=\mu_n<\mu_{n-1}\leq\cdots\leq\mu_1$ and $0=\lambda_n<\lambda_{n-1}\leq\cdots\leq\lambda_1$.
For a positive integer $p\geq n$ such that
$$\frac{2m}{p+n}<\min(\mu_{n-1}, \lambda_{n-1}),$$ we have $LE(G_1\vee \bar{K_{p,p}})$=$LE(G_2\vee \bar{K_{p,p}})$.
\end{corollary}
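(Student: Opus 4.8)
The plan is to follow the proof of Theorem~3.6 essentially verbatim, with $\bar{K_p}$ replaced by $\bar{K_{p,p}}$. The first step is to note that $\bar{K_{p,p}}$, the complement of the complete bipartite graph $K_{p,p}$, is precisely the disjoint union $K_p\cup K_p$ of two copies of the complete graph on $p$ vertices; hence by Lemma~2.4 it has $2p$ vertices and $L$-spectrum consisting of $p$ with multiplicity $2p-2$ together with $0$ with multiplicity $2$. Feeding this into Lemma~2.3 (with $n_1=n$, $n_2=2p$), the $L$-spectrum of $G_1\vee\bar{K_{p,p}}$ comes out to be
\[
n+2p,\ \underbrace{n+p,\ \dots,\ n+p}_{2p-2},\ n,\ 2p+\mu_1,\ 2p+\mu_2,\ \dots,\ 2p+\mu_{n-1},\ 0,
\]
and similarly for $G_2\vee\bar{K_{p,p}}$ with each $\mu_i$ replaced by $\lambda_i$. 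The one point demanding attention here is that one of the two zero eigenvalues of $\bar{K_{p,p}}$ is absorbed into the final $0$ prescribed by Lemma~2.3, so it surfaces as the eigenvalue $n$ with multiplicity exactly one. Both graphs have $n'=n+2p$ vertices and $m'=m+p(p-1)+2pn$ edges, hence the common average vertex degree $\frac{2m'}{n'}=\frac{2m+2p(p-1)+4pn}{n+2p}$.

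Next I would record the signs of $x-\frac{2m'}{n'}$ over these eigenvalues. The only graph-dependent ones are the $2p+\mu_i$ (respectively $2p+\lambda_i$), and a short rearrangement gives
\[
2p+\mu_i-\frac{2m'}{n'}=\mu_i-\frac{2m}{n+2p}+\frac{2p(p-n+1)}{n+2p}.
\]
Since $p\geq n$ forces $p-n+1\geq 1$, the last term is nonnegative, while the hypothesis $\frac{2m}{p+n}<\mu_{n-1}\leq\mu_i$ together with $p+n\leq n+2p$ gives $\mu_i>\frac{2m}{p+n}\geq\frac{2m}{n+2p}$; thus each $2p+\mu_i-\frac{2m'}{n'}$ is strictly positive, and the same argument works with $\lambda_i$. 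For the remaining eigenvalues $n+2p$, $n+p$, $n$ and $0$, the sign of $x-\frac{2m'}{n'}$ is determined by $n$, $m$, $p$ alone, which are shared by the two constructions, so the corresponding summands of $LE(G_1\vee\bar{K_{p,p}})$ and $LE(G_2\vee\bar{K_{p,p}})$ agree term by term without my needing to pin those signs down.

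It then remains to sum. By the trace identity $\sum_{i=1}^{n-1}\mu_i=\sum_{i=1}^{n-1}\lambda_i=2m$, the contribution of the graph-dependent block is
\[
\sum_{i=1}^{n-1}\left|2p+\mu_i-\frac{2m'}{n'}\right|=\sum_{i=1}^{n-1}\left(2p+\mu_i-\frac{2m'}{n'}\right)=2p(n-1)+2m-(n-1)\frac{2m'}{n'},
\]
and the identical formula in the $\lambda_i$ shows this block contributes equally to $LE(G_2\vee\bar{K_{p,p}})$. Combining with the common contributions of the eigenvalues $n+2p$, $n+p$ (multiplicity $2p-2$), $n$ and $0$ yields $LE(G_1\vee\bar{K_{p,p}})=LE(G_2\vee\bar{K_{p,p}})$; one may, if desired, consolidate everything into a single closed-form expression in $n$, $m$, $p$ as was done after Theorem~3.6. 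The genuine obstacle is confined to the first two steps — extracting the join spectrum with the correct multiplicities from Lemma~2.3, and checking $2p+\mu_i-\frac{2m'}{n'}\geq 0$ from the hypotheses; everything after that is a routine cancellation.
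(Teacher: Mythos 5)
Your proposal is correct and follows essentially the same route as the paper: the paper's own proof of this corollary is just the remark that it "follows by the same argument as in Theorem 3.6," and your write-up carries out exactly that argument with $\bar{K_{p,p}}=K_p\cup K_p$, correctly identifying the join spectrum (including the extra zero of $\bar{K_{p,p}}$ surfacing as the eigenvalue $n$) and verifying $2p+\mu_i-\frac{2m'}{n'}>0$ from $p\geq n$ and the hypothesis on $\frac{2m}{p+n}$. The observation that the graph-independent eigenvalues contribute identically without needing their signs is a clean way to finish and is consistent with the paper's computation.
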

\begin{proof}
The proof follows by the same argument as in Theorem 3.6.
\end{proof}
\begin{theorem}
Let $G_1(n,m)$ and $G_2(n,m)$ be two connected graphs having $L$-spectra respectively $0=\mu_n<\mu_{n-1}\leq\cdots\mu_1$ and $0=\lambda_n<\lambda_{n-1}\leq\cdots\lambda_1$.
For a positive integer $p\geq n+4$ such that
$$\dfrac{2m}{p+n}<\min(\mu_{n-1}, \lambda_{n-1}),$$
we have $LE(\bar{G_1}\cup K_p)=LE(\bar{G_2}\cup K_p)$.
\end{theorem}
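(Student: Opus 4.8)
The plan is to follow the template of Theorems 3.1--3.6: write down the $L$-spectrum of $\bar{G_1}\cup K_p$ and of $\bar{G_2}\cup K_p$ explicitly, compute the common average degree $\frac{2m'}{n'}$, fix the sign of each term $\mu-\frac{2m'}{n'}$, and then use $\sum_{i=1}^{n-1}\mu_i=2m$ to see that the resulting closed form for $LE$ depends only on $n,m,p$, hence is the same for $G_1$ and $G_2$. First I would assemble the spectrum: by Lemma 2.5 the $L$-spectrum of $\bar{G_1}$ is $n-\mu_1,\dots,n-\mu_{n-1},0$, and the $L$-spectrum of $K_p$ is $p$ (with multiplicity $p-1$) together with $0$; so by Lemma 2.4 the $L$-spectrum of $\bar{G_1}\cup K_p$ is $n-\mu_1,\dots,n-\mu_{n-1}$, $p$ (with multiplicity $p-1$), $0$ (with multiplicity $2$), on $n'=n+p$ vertices. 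Since this graph has $m'=\binom{n}{2}-m+\binom{p}{2}$ edges,
$$\frac{2m'}{n'}=\frac{n(n-1)-2m+p(p-1)}{n+p}=(n+p)-1-\frac{2np+2m}{n+p},$$
and the same formulas hold for $\bar{G_2}\cup K_p$ with $\mu_i$ replaced by $\lambda_i$.

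Next I would pin down the signs. Since $\binom{p}{2}\ge 1$ we have $\frac{2m'}{n'}>0$, so each zero eigenvalue contributes $\frac{2m'}{n'}$. For the eigenvalue $p$, the inequality $p>\frac{2m'}{n'}$ rearranges to $p(n+1)>n(n-1)-2m$, which holds since $p\ge n+4$ gives $p(n+1)\ge(n+4)(n+1)>n^2-n\ge n(n-1)-2m$. For the eigenvalues $n-\mu_i$ with $1\le i\le n-1$: because $n-\mu_{n-1}=\max_{1\le i\le n-1}(n-\mu_i)$, it is enough to show $n-\mu_{n-1}<\frac{2m'}{n'}$, which rearranges to $\frac{2np}{n+p}+\frac{2m}{n+p}<p-1+\mu_{n-1}$; using the hypothesis $\frac{2m}{n+p}<\mu_{n-1}$, this follows once $\frac{2np}{n+p}\le p-1$, i.e. $n\le\frac{p(p-1)}{p+1}=p-2+\frac{2}{p+1}$, which holds because $p\ge n+4$ (indeed $p\ge n+2$ already suffices for this inequality). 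The identical bound with $\lambda_i$ gives $n-\lambda_i<\frac{2m'}{n'}$ for all $i$.

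Having fixed the signs, I would collect the terms:
$$LE(\bar{G_1}\cup K_p)=\sum_{i=1}^{n-1}\Big(\frac{2m'}{n'}-(n-\mu_i)\Big)+(p-1)\Big(p-\frac{2m'}{n'}\Big)+2\cdot\frac{2m'}{n'},$$
and substituting $\sum_{i=1}^{n-1}\mu_i=2m$ turns the right-hand side into $(n-p+2)\frac{2m'}{n'}+p(p-1)+2m-n(n-1)$, an expression depending only on $n$, $m$ and $p$. Running the same computation for $\bar{G_2}\cup K_p$ produces the identical value, so $LE(\bar{G_1}\cup K_p)=LE(\bar{G_2}\cup K_p)$.

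The step I expect to be the main obstacle is the sign analysis for the eigenvalues $n-\mu_i$: one must check that the hypothesis $\frac{2m}{p+n}<\min(\mu_{n-1},\lambda_{n-1})$ together with $p\ge n+4$ really forces every $n-\mu_i$ and every $n-\lambda_i$ strictly below the average degree $\frac{2m'}{n'}$, since this uniform sign statement is precisely what allows the absolute values to be removed in the same way for both graphs. Everything after that is the same bookkeeping, via the trace identity $\sum_{i=1}^{n-1}\mu_i=2m$, that was used in Theorems 3.1--3.6.
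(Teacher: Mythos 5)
Your proposal is correct and follows essentially the same route as the paper's proof: assemble the $L$-spectrum of $\bar{G_i}\cup K_p$ from Lemmas 2.4 and 2.5, compute $\frac{2m'}{n'}=\frac{n^2+p^2-2m}{n+p}-1$, show $n-\mu_i-\frac{2m'}{n'}\le 0$ (resp.\ with $\lambda_i$), and reduce $LE$ via $\sum_{i=1}^{n-1}\mu_i=2m$ to an expression in $n,m,p$ only. In fact you are slightly more thorough than the paper, which merely asserts the sign inequalities and leaves the term $(p-1)\left|p-\frac{2m'}{n'}\right|$ unresolved (harmless, since it is graph-independent), whereas you verify both explicitly using $p\ge n+4$.
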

\begin{proof}
The $L$-spectra of the graphs $\bar{G_1}\cup K_p$ and $\bar{G_2}\cup K_p$ (by Lemma 2.4, 2.5) are
$n-\mu_1, n-\mu_2, \cdots, n-\mu_{n-1}, p((p-1)-times), 0, 0$ and
$n-\lambda_1, n-\lambda_2, \cdots, n-\lambda_{n-1}, p((p-1)-times), 0, 0,$ respectively,
with average vertex degree $\dfrac{2m^{\prime}}{n^{\prime}}=\dfrac{n^2+p^2}{p+n}-\dfrac{2m}{p+n}-1.$ So, for $i=1, 2, \cdots, n-1$, we have
$$n-\mu_i-\dfrac{2m^{\prime}}{n^{\prime}}=\dfrac{p(n-p)}{p+n}-(\mu_i-1-\dfrac{2m}{p+n})\leq0.$$
Similarly, $n-\lambda_i-\dfrac{2m^{\prime}}{n^{\prime}}=\dfrac{p(n-p)}{p+n}-(\lambda_i-1-\dfrac{2m}{p+n})\leq0.$ Therefore,
\begin{align*}
LE(\bar{G_1}\cup K_p)&=\sum\limits_{i=1}^{n-1}|n+\mu_i-\frac{2m^{\prime}}{n^{\prime}}|+(p-1)|p-\frac{2m^{\prime}}{n^{\prime}}|+2|0-\frac{2m^{\prime}}{n^{\prime}}|\\&
=(n-1)(\dfrac{2m^{\prime}}{n^{\prime}}-n)+(p-1)|p-\frac{2m^{\prime}}{n^{\prime}}|+\dfrac{4m^{\prime}}{n^{\prime}}-2m.
\end{align*}
Also,
\begin{align*}
LE(\bar{G_2}\cup K_p)&=\sum\limits_{i=1}^{n-1}|n+\lambda_i-\frac{2m^{\prime}}{n^{\prime}}|+(p-1)|p-\frac{2m^{\prime}}{n^{\prime}}|+2|0-\frac{2m^{\prime}}{n^{\prime}}|\\&
=(n-1)(\dfrac{2m^{\prime}}{n^{\prime}}-n)+(p-1)|p-\frac{2m^{\prime}}{n^{\prime}}|+\dfrac{4m^{\prime}}{n^{\prime}}-2m \\&=LE(\bar{G_1}\cup K_p).
\end{align*}
\end{proof}
\indent The following observation is an easy consequence of Theorem 3.8.
\begin{corollary}
Let $G_1(s,m)$ and $G_2(s,m)$ be two connected proper subgraphs of the complete graph $K_n$ having $L$-spectra respectively $0=\mu_s <\mu_{s-1}\leq\cdots\leq\mu_1$ and $0=\lambda_s<\lambda_{s-1}\leq\cdots\leq\lambda_1$, with algebraic connectivity greater than one. For a positive integer $p\geq n$ such that  $$\frac{2m}{p+n}<\min(\mu_{s-1}-1, \lambda_{s-1}-1),$$ we have $LE(\tilde{G_1})=LE(\tilde{G_2})$, where $G_i=(K_n-E(G_i))\cup K_p$ for $i=1,2$.
\end{corollary}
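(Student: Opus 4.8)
The plan is to mirror the proof of Theorem 3.8 almost verbatim, the only change being that Lemma 2.6 replaces Lemma 2.5 so as to furnish the $L$-spectrum of $K_n-E(G_i)$ in place of that of an ordinary complement. Fix one of the two graphs, say $G$ (on $s$ vertices and $m$ edges), with $L$-spectrum $0=\mu_s<\mu_{s-1}\le\cdots\le\mu_1$, and write $\tilde G=(K_n-E(G))\cup K_p$; the other graph is handled identically with $\lambda$'s in place of $\mu$'s. By Lemma 2.6 the $L$-spectrum of $K_n-E(G)$ is $n-\mu_1,\dots,n-\mu_s$ together with $n$ repeated $n-s-1$ times and one $0$; since $G$ is connected, $\mu_s=0$ is its only zero eigenvalue, so this list is $n-\mu_1,\dots,n-\mu_{s-1}$, the value $n$ with multiplicity $n-s$, and a single $0$. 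As the $L$-spectrum of $K_p$ is $p$ with multiplicity $p-1$ together with a single $0$, Lemma 2.4 gives the $L$-spectrum of $\tilde G$ as
\[
n-\mu_1,\ \dots,\ n-\mu_{s-1},\quad n\ ((n-s)\text{-times}),\quad p\ ((p-1)\text{-times}),\quad 0,\ 0 .
\]
Hence $\tilde G$ has $n'=n+p$ vertices and $m'=\binom{n}{2}-m+\binom{p}{2}$ edges, and its average degree $\frac{2m'}{n'}=\frac{n^2+p^2}{n+p}-1-\frac{2m}{n+p}$ depends only on $n,m,p$.

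Next I would carry out the sign analysis exactly as in Theorem 3.8. For $1\le j\le s-1$,
\[
n-\mu_j-\frac{2m'}{n'}=\frac{p(n-p)}{n+p}-\Bigl(\mu_j-1-\frac{2m}{n+p}\Bigr)\le 0 ,
\]
since $p\ge n$ forces the first term to be $\le 0$, while the hypothesis $\frac{2m}{p+n}<\mu_{s-1}-1\le\mu_j-1$ forces the bracketed quantity to be $\ge 0$ — the assumption that the algebraic connectivity exceeds $1$ is precisely what makes $\mu_{s-1}-1$ positive, so that an admissible $p$ exists. The eigenvalue blocks at $n$, $p$ and $0$ contribute $(n-s)\bigl|n-\frac{2m'}{n'}\bigr|$, $(p-1)\bigl|p-\frac{2m'}{n'}\bigr|$ and $\frac{4m'}{n'}$ respectively, and I would leave the first two absolute values unresolved since they already involve only $n,m,p,s$. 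Collecting everything,
\[
LE(\tilde G)=\sum_{j=1}^{s-1}\Bigl(\frac{2m'}{n'}-n+\mu_j\Bigr)+(n-s)\Bigl|n-\frac{2m'}{n'}\Bigr|+(p-1)\Bigl|p-\frac{2m'}{n'}\Bigr|+\frac{4m'}{n'} .
\]

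Finally I would invoke the trace identity $\sum_{j=1}^{s-1}\mu_j=\operatorname{tr}L(G)=2m$ (valid because $\mu_s=0$), which turns the first sum into $(s-1)\bigl(\frac{2m'}{n'}-n\bigr)+2m$, so that the entire right-hand side is a function of $n,m,p,s$ alone. Since $G_1$ and $G_2$ share the same $n$, $m$ and $s$ and we take the same $p$, repeating the computation for $\tilde G_2$ with $\sum_{j=1}^{s-1}\lambda_j=2m$ produces the same value, and therefore $LE(\tilde G_1)=LE(\tilde G_2)$. I expect the only points needing care, both mild, to be the bookkeeping of the multiplicities coming out of Lemma 2.6 — in particular the block of eigenvalues equal to $n$, which has no counterpart in the plain-complement setting of Theorem 3.8 — and checking that the sign inequality for the $n-\mu_j$ block still goes through under the hypothesis $p\ge n$. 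Once the spectrum is written out correctly, the argument collapses just as in Theorem 3.8, the trace identity $\sum\mu_j=2m$ being exactly what erases all dependence on the individual Laplacian eigenvalues.
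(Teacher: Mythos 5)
Your proof is correct and follows exactly the route the paper intends: the paper states this corollary without a written proof, calling it an easy consequence of Theorem 3.8, and your argument is precisely that adaptation, with Lemma 2.6 supplying the spectrum of $K_n-E(G_i)$, the multiplicity of the eigenvalue $n$ correctly tallied as $n-s$, the sign analysis under $p\geq n$ carried out as in Theorem 3.8, and the trace identity $\sum_{j=1}^{s-1}\mu_j=\sum_{j=1}^{s-1}\lambda_j=2m$ removing all dependence on the individual eigenvalues.
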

\indent Since the graph $\bar{G}\cup K_p$ is the complement of the graph $G\vee \bar{K_p}$, we can combine Theorems 3.6 and 3.8 to obtain the following result.
\begin{theorem}
Let $G_1(n,m)$ and $G_2(n,m)$ be two connected graphs having $L$-spectra respectively $0=\mu_n<\mu_{n-1}\leq\cdots\mu_1$ and $0=\lambda_n<\lambda_{n-1}\leq\cdots\lambda_1$.
For a positive integer $p\geq n+4$ such that
$$\dfrac{2m}{p+n}<\min(\mu_{n-1}, \lambda_{n-1}),$$
we have $LE(\bar{G_1}\cup K_p)=LE(\bar{G_2}\cup K_p)$ and $LE(G_1\vee \bar{K_p})$=$LE(G_2\vee \bar{K_p})$.
\end{theorem}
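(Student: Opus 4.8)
The plan is to recognise Theorem 3.10 as nothing more than the conjunction of the two already-established results Theorem 3.6 and Theorem 3.8, glued together by the complement identity $\overline{G\vee\bar{K_p}}=\bar{G}\cup K_p$. Concretely, the only thing that needs checking is that, under the single hypothesis ``$p\geq n+4$ and $\dfrac{2m}{p+n}<\min(\mu_{n-1},\lambda_{n-1})$'', the numerical conditions required by both of those theorems are simultaneously fulfilled.

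First I would observe that $p\geq n+4$ forces $p\geq n$, which together with $\dfrac{2m}{p+n}<\min(\mu_{n-1},\lambda_{n-1})$ is exactly the hypothesis of Theorem 3.6; applying that theorem verbatim yields $LE(G_1\vee\bar{K_p})=LE(G_2\vee\bar{K_p})$. Next, the pair of conditions $p\geq n+4$ and $\dfrac{2m}{p+n}<\min(\mu_{n-1},\lambda_{n-1})$ is literally the hypothesis of Theorem 3.8, so that theorem delivers $LE(\bar{G_1}\cup K_p)=LE(\bar{G_2}\cup K_p)$. Stating the two resulting equalities side by side is precisely the assertion of Theorem 3.10.

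It is worth recording why the two families belong in a single statement: since the complement of a join is the disjoint union of the complements and $\overline{\bar{K_p}}=K_p$, we have $\overline{G_i\vee\bar{K_p}}=\bar{G_i}\cup K_p$, so the two $L$-equienergetic families are complements of one another. One could, if desired, try to derive the second equality from the first by passing to complements through Lemma 2.5, but rewriting $|\,n-\mu_i-\tfrac{2m'}{n'}\,|$ in terms of the original Laplacian spectrum introduces a shift by $1$ inside the absolute values (as well as different sign patterns), so this route is no shorter than simply invoking Theorem 3.8, which already performed exactly that bookkeeping.

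The main obstacle here is essentially non-existent: it is the elementary observation that ``$p\geq n+4$'' subsumes the weaker requirement ``$p\geq n$'' of Theorem 3.6 and that the strict inequality on $\dfrac{2m}{p+n}$ is common to both hypotheses. Once that is noted the proof is immediate, and, in the spirit of the remark following Theorem 3.5, the set of admissible $p$ is upward closed, so one again obtains infinite sequences of $L$-equienergetic (indeed non-cospectral) graph pairs.
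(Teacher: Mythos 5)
Your proposal is correct and matches the paper's own treatment: Theorem 3.10 is stated there precisely as the combination of Theorems 3.6 and 3.8 (noting that $\bar{G}\cup K_p$ is the complement of $G\vee\bar{K_p}$), with no further argument needed beyond observing that $p\geq n+4$ implies $p\geq n$ so both hypotheses hold simultaneously. Your additional remarks on the complement relationship and the upward-closed set of admissible $p$ are consistent with the paper's surrounding discussion.
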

\indent Again, if $k$ is the first value of positive integer $p$ satisfying the condition given in the Theorems 3.6 to 3.10, then every integer greater than $k$ also satisfies this condition, hence we will obtain a sequence of $L$-equienergetic graph pairs. Note that such a sequence is possible as we are dealing with finite graphs.\\
\section{Construction of $L$-equienergetic ($Q$-equienergetic) graphs by using product of graphs}
\indent In section 3, we have seen how the operations like union, join and complement of graphs can be used to obtain sequences of $L$-equienergetic graph from any given pair of connected graphs. In this section we will use product of graphs togather with the operations introduced in Section 3 to obtain new families of $L$-equienergetic ($Q$-equienergetic) graphs from any given pair of connected graphs having same number of vertices and edges.
\begin{theorem}
Let $G_1(n,m)$ and $G_2(n,m)$ be two connected graphs having $L$-spectra respectively $0=\mu_n<\mu_{n-1}\leq\cdots\leq\mu_1$ and $0=\lambda_n<\lambda_{n-1}\leq\cdots\leq\lambda_1$.
For a positive integer $p> n$ such that
$$\frac{2m}{p+n}<\min(\mu_{n-1}, \lambda_{n-1}),$$ we have $LE(\tilde{G_1})$=$LE(\tilde{G_2})$, where $\tilde{G_i}=(G_i\cup \bar{K_p})\times K_p$, $i=1, 2$.
\end{theorem}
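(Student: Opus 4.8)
The plan is to follow the template of Theorems 3.6--3.10: read off the $L$-spectrum of $\tilde{G_i}=(G_i\cup\bar{K_p})\times K_p$ from the structural lemmas, compute the average vertex degree $\frac{2m'}{n'}$, use the two hypotheses to fix the sign of each eigenvalue minus this average, and then observe that the resulting closed form for $LE(\tilde{G_i})$ involves $G_i$ only through $n$, $m$ and the trace identity $\sum_{j=1}^{n-1}\mu_j=2m$, so it is the same for $i=1$ and $i=2$.

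First I would assemble the spectrum. By Lemma 2.4 the $L$-spectrum of $G_i\cup\bar{K_p}$ is $\mu_1,\dots,\mu_{n-1}$ together with $0$ of multiplicity $p+1$, and this graph has $n+p$ vertices and $m$ edges; the $L$-spectrum of $K_p$ is $0$ and $p$ (with multiplicity $p-1$). Lemma 2.1 then gives the $L$-spectrum of $\tilde{G_i}$ as the multiset of all pairwise sums, namely $\mu_j$ (once) and $\mu_j+p$ (with multiplicity $p-1$) for $j=1,\dots,n-1$, the value $0$ with multiplicity $p+1$, and the value $p$ with multiplicity $(p+1)(p-1)=p^2-1$. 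Here $n'=p(n+p)$, and from the standard edge count for a cartesian product one gets $m'=(n+p)\binom{p}{2}+pm$, so that $\frac{2m'}{n'}=(p-1)+\frac{2m}{p+n}$. The spectrum of $\tilde{G_2}$ is the same with each $\mu_j$ replaced by $\lambda_j$, and $\tilde{G_2}$ has the same order and size.

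Next comes the sign analysis. For $j=1,\dots,n-1$ the hypothesis $\frac{2m}{p+n}<\mu_{n-1}\le\mu_j$ gives $(\mu_j+p)-\frac{2m'}{n'}=\mu_j+1-\frac{2m}{p+n}>1>0$, while $p>n$ (so $p-1\ge n\ge\mu_1\ge\mu_j$) gives $\mu_j-\frac{2m'}{n'}=\mu_j-(p-1)-\frac{2m}{p+n}\le0$, and clearly $0-\frac{2m'}{n'}<0$. The one eigenvalue whose position relative to the average is not forced is $p$ itself, where $p-\frac{2m'}{n'}=1-\frac{2m}{p+n}$; but since $m$ is common to both graphs, the term $(p^2-1)\bigl|1-\frac{2m}{p+n}\bigr|$ contributes identically to $LE(\tilde{G_1})$ and $LE(\tilde{G_2})$, so it may simply be carried along as an absolute value exactly as in the proof of Theorem 3.8. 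Summing the absolute deviations and using $\sum_{j=1}^{n-1}\mu_j=2m$, all eigenvalue-dependent contributions collapse into an expression in $n$, $m$, $p$ only; the identical computation with the $\lambda_j$'s then yields $LE(\tilde{G_1})=LE(\tilde{G_2})$. The arithmetic is routine; the only delicate point — the closest thing to an obstacle — is checking that the two hypotheses are precisely what is needed to kill the sign ambiguities (with $p>n$ handling the ``low'' copies $\mu_j$ and $\frac{2m}{p+n}<\mu_{n-1}$ handling the ``shifted'' copies $\mu_j+p$) and noticing that the genuinely indeterminate eigenvalue $p$ is harmless because its multiplicity and its distance from the average depend only on $n$, $m$, $p$. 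As with the earlier theorems, once some $p=k$ satisfies the constraint every larger integer does too, so this construction in fact produces a whole sequence of $L$-equienergetic pairs.
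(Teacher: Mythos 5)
Your proposal is correct and follows essentially the same route as the paper's proof: read off the spectrum of $(G_i\cup\bar{K_p})\times K_p$ via the union and Cartesian-product lemmas, compute $\frac{2m'}{n'}=(p-1)+\frac{2m}{p+n}$, fix the signs of $\mu_j+p-\frac{2m'}{n'}$ and $\mu_j-\frac{2m'}{n'}$ from the two hypotheses, carry the sign-ambiguous eigenvalue $p$ as an absolute value (harmless since its value and multiplicity depend only on $n,m,p$), and collapse everything with $\sum_{j=1}^{n-1}\mu_j=2m$. If anything, your treatment is slightly more careful than the paper's (explicitly invoking $\mu_j\le n\le p-1$ from $p>n$, and counting the multiplicity of the eigenvalue $p$ as $(p+1)(p-1)$), but the argument is the same.
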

\begin{proof}
The $L$-spectra of the graphs $G_1\cup \bar{K_p}$ and $G_2\cup \bar{K_p}$ are respectively
$\mu_1, \mu_2, \cdots, \mu_{n-1}, \mu_n=\mu_{n+1}=\cdots=\mu_{p+n}=0$ and
$\lambda_1, \lambda_2, \cdots, \lambda_{n-1}, \lambda_n=\lambda_{n+1}=\cdots=\lambda_{p+n}=0,$.
Also the $L$-spectra of $K_p$ is $p=\tau_1=\tau_2=\cdots=\tau_{p-1}, \tau_p=0.$
So by Lemma 1.1, the $L$-spectra of graphs $\tilde{G_1}$ and $\tilde{G_2}$ are respectively
$\mu_i+\tau_j$ and $\lambda_i+\tau_j$, where $1\leq i \leq {p+n}, 1\leq j \leq p,$
with average vertex degree $\dfrac{2m^{\prime}}{n^{\prime}}=\dfrac{2m}{n+p}+p-1.$
So, for $i=1, 2, \cdots, n-1$, we have
$$p+\mu_i-\dfrac{2m^{\prime}}{n^{\prime}}=\mu_i-\dfrac{2m}{n+p}+1> 0$$ and
$$\mu_i-\dfrac{2m^{\prime}}{n^{\prime}}=\mu_i-\dfrac{2m}{p+n}-p+1\leq 0.$$
Similarly, $p+\lambda_i-\dfrac{2m^{\prime}}{n^{\prime}}> 0$ and
$\lambda_i-\dfrac{2m^{\prime}}{n^{\prime}}\leq 0.$ Therefore,
\begin{align*}
&LE(\tilde{G_1})=\sum\limits_{i=1}^{p+n}\sum\limits_{j=1}^{p}|\mu_i+\tau_j-\dfrac{2m^{\prime}}{n^{\prime}}|\\&
=(p-1)\sum\limits_{i=1}^{p+n}|p+\mu_i-\dfrac{2m^{\prime}}{n^{\prime}}|+\sum\limits_{i=1}^{p+n}|\mu_i-\dfrac{2m^{\prime}}{n^{\prime}}|\\&
=(p-1)\sum\limits_{i=1}^{n-1}(p+\mu_i-\dfrac{2m^{\prime}}{n^{\prime}})+(p-1)\sum\limits_{i=n}^{p+n}|p-\dfrac{2m^{\prime}}{n^{\prime}}|+\sum\limits_{i=1}^{p+n}(\dfrac{2m^{\prime}}{n^{\prime}}-\mu_i)\\&
=(p-1)(n-1)(p-\dfrac{2m^{\prime}}{n^{\prime}})+(p-1)(p+n)|p-\dfrac{2m^{\prime}}{n^{\prime}}|\\&+(p+n)\dfrac{2m^{\prime}}{n^{\prime}}+2m(p-2).
\end{align*}
Also
\begin{align*}
&LE(\tilde{G_2})=\sum\limits_{i=1}^{p+n}\sum\limits_{j=1}^{p}|\lambda_i+\tau_j-\dfrac{2m^{\prime}}{n^{\prime}}|\\&
=(p-1)(n-1)(p-\dfrac{2m^{\prime}}{n^{\prime}})+(p-1)(p+n)|p-\dfrac{2m^{\prime}}{n^{\prime}}|\\&+(p+n)\dfrac{2m^{\prime}}{n^{\prime}}+2m(p-2).
\end{align*}
Hence the result follows.
\end{proof}
\indent A result similar to Theorem 4.1 can be put forward for the connected graphs $G_1$ and $G_2$ for the $Q$-spectra in the following way.
\begin{theorem}
Let $G_1(n,m)$ and $G_2(n,m)$ be two connected graphs having $Q$-spectra respectively $0<\mu^+_n\leq\mu^+_{n-1}\leq\cdots\leq\mu^+_1$ and $0<\lambda^+_n\leq\lambda^+_{n-1}\leq\cdots\leq\lambda^+_1$, with $\mu^+_n, \lambda^+_n> 1$.
For a positive integer $p$ such that  $$\frac{2m}{p+n}<\min(\mu^+_{n}-1, \lambda^+_{n}-1),$$ we have $LE^+((G_1\cup\bar{K_p})\times K_p)=LE^+((G_2\cup\bar{K_p})\times K_p)$.
\end{theorem}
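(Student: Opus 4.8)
The plan is to run the computation of Theorem~4.1 essentially verbatim, with $L$-spectra replaced by $Q$-spectra throughout; the one input that genuinely changes is the signless Laplacian spectrum of $K_p$, which I would recompute at the outset. Since $\bar{K_p}$ has zero signless Laplacian matrix, Lemma~2.4 gives that the $Q$-spectrum of $G_i\cup\bar{K_p}$ is $\mu^+_1,\mu^+_2,\dots,\mu^+_n$ together with $0$ of multiplicity $p$ (and likewise with the $\lambda^+_i$ for $G_2$). Since $L^+(K_p)=(p-2)I+J$, its $Q$-spectrum is $2p-2$ (once) and $p-2$ with multiplicity $p-1$; write $\tau_1=2p-2$ and $\tau_2=\dots=\tau_p=p-2$.

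Next I would apply Lemma~2.1 (cartesian product) to obtain the $Q$-spectrum of $\tilde{G_i}=(G_i\cup\bar{K_p})\times K_p$: it consists of the numbers $\mu^+_i+\tau_j$ for $1\le i\le n$, $1\le j\le p$, together with $0+\tau_j$ each occurring with multiplicity $p$ (from the $p$ zero eigenvalues of $G_i\cup\bar{K_p}$). The product has $n'=p(n+p)$ vertices, and since the trace of $L^+(\tilde{G_i})$ equals $p\cdot 2m+(n+p)(p^2-p)$, its average degree is $\frac{2m'}{n'}=\frac{2m}{n+p}+(p-1)$, exactly as in Theorem~4.1.

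Then comes the sign bookkeeping. For $1\le i\le n$ the two ``graph-dependent'' families of product eigenvalues satisfy
\[
\mu^+_i+\tau_1-\frac{2m'}{n'}=\Bigl(\mu^+_i-1-\frac{2m}{n+p}\Bigr)+p>0,\qquad \mu^+_i+\tau_2-\frac{2m'}{n'}=\mu^+_i-1-\frac{2m}{n+p}>0,
\]
both strict inequalities being precisely the hypothesis $\frac{2m}{p+n}<\mu^+_{n}-1\le\mu^+_i-1$ (this is also where the assumption $\mu^+_n>1$ is used: it guarantees such a positive integer $p$ exists). The remaining product eigenvalues are $\tau_1=2p-2$ (multiplicity $p$) and $\tau_2=p-2$ (multiplicity $p(p-1)$), and neither involves the spectrum of $G_i$ at all, so their contributions to $LE^+$ are the graph-independent quantities $p\,\bigl|\,2p-2-\frac{2m'}{n'}\,\bigr|$ and $p(p-1)\bigl(1+\frac{2m}{n+p}\bigr)$; in particular the sign of $2p-2-\frac{2m'}{n'}$ need not be determined.

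Finally, summing the four blocks and using $\sum_{i=1}^{n}\mu^+_i=2m$, I obtain
\[
LE^+(\tilde{G_i})=2m+n\Bigl(\tau_1-\frac{2m'}{n'}\Bigr)+(p-1)\Bigl(2m+n\bigl(\tau_2-\frac{2m'}{n'}\bigr)\Bigr)+p\,\Bigl|\,2p-2-\frac{2m'}{n'}\,\Bigr|+p(p-1)\Bigl(1+\frac{2m}{n+p}\Bigr),
\]
an expression depending only on $n$, $m$ and $p$; hence $LE^+(\tilde{G_1})=LE^+(\tilde{G_2})$. The only step requiring real care --- and what I regard as the main obstacle --- is getting the signless Laplacian spectrum of $K_p$ right, since it differs from the Laplacian spectrum used in Theorem~4.1, and then checking that the two $\mu^+_i$-dependent families of product eigenvalues lie strictly above $\frac{2m'}{n'}$ (which is exactly what the stated inequality is engineered to give). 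Once those signs are pinned down, everything else is the same routine summation as in Theorem~4.1.
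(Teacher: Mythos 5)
Your proposal is correct and follows essentially the same route as the paper: compute the $Q$-spectrum of $K_p$ as $2p-2,\,p-2$ (multiplicity $p-1$) and of $G_i\cup\bar{K_p}$ as $\mu^+_1,\dots,\mu^+_n,0$ ($p$ times), apply the cartesian-product lemma, get $\frac{2m'}{n'}=\frac{2m}{n+p}+p-1$, use the hypothesis to fix the signs of the $\mu^+_i$-dependent eigenvalues, and sum to a graph-independent expression. The paper stops after the sign analysis with ``proceed as in Theorem 4.1,'' whereas you carry the summation out explicitly; the content is the same.
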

\begin{proof}
The $Q$-spectra of the graphs $(G_1\cup\bar{K_p})\times K_p$ and $(G_2\cup\bar{K_p})\times K_p$ are (by Lemmas 1.1, 1.4) $2p-2+\mu^+_1, 2p-2+\mu^+_2, \cdots, 2p-2+\mu^+_n, (2p-2) (p-times), \{p-2+\mu^+_1, \cdots, p-2+\mu^+_n\} (each (p-1)-times), (p-2) (p(p-1)-times)$ and
$2p-2+\lambda^+_1, 2p-2+\lambda^+_2, \cdots, 2p-2+\lambda^+_n, (2p-2) (p-times), \{p-2+\lambda^+_1, \cdots, p-2+\lambda^+_n\} (each (p-1)-times), (p-2) (p(p-1)-times),$ respectively,
with average vertex degree $\dfrac{2m^{\prime}}{n^{\prime}}=\dfrac{2m}{n+p}+p-1.$
So for $i=1, 2, \cdots, n-1$, we have
$$2p-2+\mu^+_i-\dfrac{2m^{\prime}}{n^{\prime}}=p+\mu^+_i-\dfrac{2m}{n+p}-1> 0$$ and
$$p-2+\mu^+_i-\dfrac{2m^{\prime}}{n^{\prime}}=\mu^+_i-\dfrac{2m}{p+n}-1> 0.$$
Similarly, $2p-2+\lambda^+_i-\dfrac{2m^{\prime}}{n^{\prime}}> 0$ and
$p-2+\mu^+_i-\dfrac{2m^{\prime}}{n^{\prime}}> 0.$\\
The results now follows by proceeding in the same way as in Theorem 4.1.
\end{proof}
\indent Theorem 4.2 is true for connected graphs $G_1$ and $G_2$ having algebraic connectivity greater than one, however if we consider graph $(G\cup \bar{K_p})\times K_{p,p}$ in place of the graph $(G\cup \bar{K_p})\times K_p$, it holds for any connected pair of graphs as seen in the following.
\begin{corollary}
Let $G_1$ and $G_2$ be two connected graphs having $Q$-spectra respectively $0<\mu^+_n\leq\mu^+_{n-1}\leq\cdots\leq\mu^+_1$ and $0<\lambda^+_n\leq\lambda^+_{n-1}\leq\cdots\leq\lambda^+_1$.
For a positive integer $p\geq 2n$ such that  $$\frac{2m}{p+n}<\min(\mu^+_{n}, \lambda^+_{n}),$$ we have $LE^+((G_1\cup\bar{K_p})\times K_{p,p})=LE^+((G_2\cup\bar{K_{p,p}})\times K_p)$.
\end{corollary}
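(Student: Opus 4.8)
The plan is to obtain the full $Q$-spectrum of each side from the two structural lemmas: Lemma 2.4 gives that the $Q$-spectrum of a disjoint union is the union of the $Q$-spectra, and Lemma 1.1 gives that the $Q$-spectrum of a Cartesian product is the multiset of pairwise sums of eigenvalues. The needed building blocks are: $\bar{K_p}$ has $Q$-spectrum $0$ with multiplicity $p$; $K_p$ has $2p-2$ once and $p-2$ with multiplicity $p-1$; $K_{p,p}$, being bipartite and $p$-regular, has $2p$ once, $0$ once and $p$ with multiplicity $2p-2$; and since $\bar{K_{p,p}}=K_p\cup K_p$, its $Q$-spectrum is $2p-2$ with multiplicity $2$ together with $p-2$ with multiplicity $2(p-1)$. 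Once each spectrum is assembled I would compute the mean $\frac{2m'}{n'}$, fix the sign of every eigenvalue block, and collapse $LE^+=\sum_i|\mu^+_i-\frac{2m'}{n'}|$ to a closed form in $n,m,p$.

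First I would treat the left-hand graph $(G_1\cup\bar{K_p})\times K_{p,p}$. Its $Q$-spectrum is the set of sums of $\{\mu^+_1,\dots,\mu^+_n,0^{(p)}\}$ with $\{2p,0,p^{(2p-2)}\}$; this gives $2p(n+p)$ vertices and mean degree $\frac{2m}{n+p}+p$. Using $p\ge 2n$ (so every $\mu^+_k<p$) and $\frac{2m}{n+p}<\mu^+_n$, each of the six blocks has a constant sign, so the absolute values open up and every dependence on the individual $\mu^+_k$ beyond $\sum_k\mu^+_k=2m$ cancels. The sum telescopes to the clean closed form $LE^+\big((G_1\cup\bar{K_p})\times K_{p,p}\big)=2p(n+p)+\frac{8mp(p-1)}{n+p}$, which depends only on $n,m,p$.

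Next I would carry out the analogous computation for the right-hand graph $(G_2\cup\bar{K_{p,p}})\times K_p=(G_2\cup(K_p\cup K_p))\times K_p$. Its $Q$-spectrum is the set of sums of $\{\lambda^+_1,\dots,\lambda^+_n,(2p-2)^{(2)},(p-2)^{(2(p-1))}\}$ with $\{2p-2,(p-2)^{(p-1)}\}$, yielding a graph on $p(n+2p)$ vertices with mean degree $\frac{2m+(p-1)(n+4p)}{n+2p}$. The spectrum now splits into five blocks, and the delicate step is the block of value $2p-4$ of multiplicity $2(p-1)^2$, which lies closest to the mean: a direct computation gives $2p-4-\frac{2m'}{n'}=\frac{p(n-4)-3n-2m}{n+2p}$, so its sign is controlled by $p(n-4)-3n-2m$ and need not be constant over the whole allowed range of $p$. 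I would also have to verify that the block $\lambda^+_k+p-2$ is uniformly negative (otherwise the individual $\lambda^+_k$ survive and the side is not a function of $m$ alone). After fixing the signs of all five blocks and summing, this side too reduces to a closed form in $n,m,p$.

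The hard part will be the final reconciliation of the two closed forms. Because the two graphs have \emph{different} orders, $2p(n+p)$ versus $p(n+2p)$, and different mean degrees, the equality cannot be read off structurally and must be checked as a numerical identity in $n,m,p$. The decisive test is the coefficient of $m$: on the left it is $\frac{8p(p-1)}{n+p}$, while tracking $m$ through the blocks $\lambda^+_k+2p-2$, $\lambda^+_k+p-2$ and through the mean on the right produces a coefficient whose denominator is $n+2p$. Forcing these two coefficients to agree is exactly the crux of the argument, and it is at this step that one must confront whether the two product factors have to be identified (both taken to be $K_{p,p}$) for the stated equality to hold; resolving this mismatch, together with the borderline $2p-4$ block above, is where the real work of the proof lies.
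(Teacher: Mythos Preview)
Your approach---assemble the $Q$-spectrum from the union and product lemmas, compute the mean, fix the sign of each eigenvalue block, and collapse $\sum_i|\mu^+_i-\tfrac{2m'}{n'}|$ to a closed form in $n,m,p$---is precisely the paper's method; its entire proof is the line ``follows by the same argument as in Theorem~4.2.'' Your left-hand computation is correct, and your $Q$-spectrum of $K_{p,p}$ (namely $2p$, $p^{(2p-2)}$, $0$) is the right one; the paper's own listed spectrum is garbled.

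The obstacle you hit on the right is not a gap in your argument but a typographical error in the statement. The sentence introducing the corollary says explicitly that one replaces the factor $K_p$ of Theorem~4.2 by $K_{p,p}$, i.e.\ considers $(G_i\cup\bar{K_p})\times K_{p,p}$ for $i=1,2$; the printed right-hand side $(G_2\cup\bar{K_{p,p}})\times K_p$ is a misprint. Read as intended, the two graphs have the same order $2p(n+p)$ and the same mean degree, your left-side calculation applies verbatim with $\lambda^+_k$ in place of $\mu^+_k$, and equality is immediate because the closed form depends on the eigenvalues only through $\sum_k\lambda^+_k=2m$. So the ``hard reconciliation'' you anticipated---matching denominators $n+p$ against $n+2p$, controlling the $2p-4$ block---simply does not arise; you correctly sensed that the two factors must be identified, and that is indeed the resolution.
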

\begin{proof}
The proof follows by the same argument as in Theorem (4.2) and the fact that the $Q$-spectra of the graph $K_{p,p}$ is $p, \dfrac{p}{2} ((p-2)-times), 0$.
\end{proof}
\indent The following corollary is a consequence of the Theorem 4.1.
\begin{corollary}
Let $G_1(s,m)$ and $G_2(s,m)$ be two connected proper subgraphs of the complete graph $K_n$ having $L$-spectra respectively $0=\mu_s<\mu_{s-1}\leq\cdots\leq\mu_1$ and $0=\lambda_s<\lambda_{s-1}\leq\cdots\leq\lambda_1$, with algebraic connectivity greater than two. For a positive integer $p$ such that  $$\frac{2m}{p+n}<\min(\mu_{n-1}-2, \lambda_{n-1}-2),$$ we have $LE(\tilde{G_1})$=$LE(\tilde{G_2})$, where $G_i=((K_n-E(G_i))\cup K_p)\times K_p$ for $i=1, 2$.
\end{corollary}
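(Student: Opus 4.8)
The statement is meant to follow from Theorem 4.1 applied to the pair of graphs $K_n-E(G_1)$ and $K_n-E(G_2)$, so the plan is to reduce to that setting and then trace through the resulting spectral computation. First I would invoke Lemma 2.6 to record that $H_i:=K_n-E(G_i)$ is a graph on $n$ vertices with $\binom n2-m$ edges whose $L$-spectrum is $n-\mu_1,\dots,n-\mu_s,\;n\;((n-s-1)\text{-times}),\;0$ (and the analogous list with the $\lambda_j$'s for $H_2$). Since $G_i$ is a \emph{proper} subgraph of $K_n$ we have $\mu_1\le s\le n-1<n$, hence $n-\mu_1\ge 1>0$, so $H_1$ and $H_2$ are connected; the hypothesis that the algebraic connectivity of each $G_i$ exceeds $2$ guarantees moreover that every entry $n-\mu_j$ stays bounded away from $0$ after the shifts below, which is what forces the "$-2$" into the numerical condition. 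With the $L$-spectra of $H_1,H_2$ in hand, Lemma 2.4 gives the $L$-spectrum of $H_i\cup K_p$ (adjoin $p\;((p-1)\text{-times})$ and one extra $0$), and then Lemma 2.1 (cartesian product) gives the $L$-spectrum of $\tilde G_i=(H_i\cup K_p)\times K_p$ as all sums $\sigma+\tau$ with $\sigma$ in the spectrum of $H_i\cup K_p$ and $\tau\in\{p\;((p-1)\text{-times}),\,0\}$.

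Next I would compute the average degree $\dfrac{2m'}{n'}$ of $\tilde G_i$ directly from $|E(\tilde G_i)|=p\bigl(\binom n2-m+\binom p2\bigr)+(n+p)\binom p2$ and $|V(\tilde G_i)|=(n+p)p$; this quantity depends only on $n,m,p$ and not on which of $G_1,G_2$ we started from. Then comes the sign analysis: for each block of the spectrum of $\tilde G_i$ one checks, using $p\ge n$ together with $\frac{2m}{p+n}<\min(\mu_{s-1}-2,\lambda_{s-1}-2)$, on which side of $\dfrac{2m'}{n'}$ it lies — in particular the doubly shifted eigenvalues $p+(n-\mu_j)$ and the eigenvalues $n-\mu_j$ themselves must all land below $\dfrac{2m'}{n'}$, exactly as the corresponding inequalities in Theorem 4.1 arrange. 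Once every block is located, the trace identity $\sum(\sigma+\tau)=2m'$ gives $LE(\tilde G_i)=2\sum_{\sigma+\tau\ge 2m'/n'}\bigl(\sigma+\tau-\tfrac{2m'}{n'}\bigr)$, and the $\mu_j$-dependent (resp. $\lambda_j$-dependent) contributions telescope away, leaving a closed expression in $n,m,p,s$ that is the same for $\tilde G_1$ and $\tilde G_2$; hence $LE(\tilde G_1)=LE(\tilde G_2)$.

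The main obstacle is precisely this sign bookkeeping: one must be sure no class of eigenvalues of $\tilde G_i$ straddles the average degree, and this is why the hypothesis is phrased in terms of $\mu_{s-1}-2$ (not $\mu_{s-1}$) and why the algebraic connectivity must exceed $2$ — the two successive additions of $p$ coming from the inner $K_p$ (via the disjoint union) and the outer $K_p$ (via the cartesian product) push the "small" eigenvalues $n-\mu_j$ upward, and the stated condition is exactly what keeps them below $\dfrac{2m'}{n'}$ uniformly in $j$ and in the choice of $G_i$. I would also expect to need an explicit lower bound on $p$ (in the spirit of the $p\ge n+4$ of Theorem 3.8) to pin down the borderline block coming from the eigenvalue $n+p$; checking those boundary cases, and getting the multiplicities right through the three layered operations (complementation inside $K_n$, disjoint union, cartesian product), is where essentially all the work lies.
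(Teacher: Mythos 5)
Your proposal is correct and is essentially the argument the paper intends: the corollary is stated without proof as a consequence of Theorem 4.1, and the intended verification is exactly your route --- Lemma 2.6 for the $L$-spectrum of $K_n-E(G_i)$, the union and Cartesian product lemmas, the common average degree $\dfrac{2m'}{n'}$ depending only on $n,m,p$, and the observation that (for $p\ge n$, which you correctly supply since the paper's corollary omits it but inherits it from Theorem 4.1) both graph-dependent classes $n-\mu_j$ and $p+n-\mu_j$ lie below the average, so their total contribution depends only on $\sum_j\mu_j=2m$; this mirrors the paper's own proofs of Theorem 4.5 and Corollary 3.6. One small remark: your concern about resolving the borderline $n+p$ block (and needing a bound like $p\ge n+4$) is unnecessary, since that block is identical for $\tilde{G_1}$ and $\tilde{G_2}$ and contributes equally whatever its sign; only the $\mu_j$- and $\lambda_j$-dependent classes need a definite sign, and your inequalities handle exactly those.
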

\begin{theorem}
Let $G_1$ and $G_2$ be two connected graphs having $L$-spectra respectively $0=\mu_n<\mu_{n-1}\leq\cdots\leq\mu_1$ and $0=\lambda_n<\lambda_{n-1}\leq\cdots\leq\lambda_1$.
For a positive integer $p\geq 2n$ such that
$$\frac{2m}{p+n}<\min(\mu_{n-1}, \lambda_{n-1}),$$ we have $LE(\tilde{G_1})$=$LE(\tilde{G_2})$, where $\tilde{G_i}=(\bar{G_i}\cup K_p)\times K_p$, $i=1, 2$.
\end{theorem}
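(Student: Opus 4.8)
The plan is to follow the method of Theorem 4.1: assemble the $L$-spectrum of $\tilde{G_i}=(\bar{G_i}\cup K_p)\times K_p$, compute the average degree $\tfrac{2m'}{n'}$, determine the sign of each eigenvalue minus this average, and observe that the resulting expression for $LE(\tilde{G_i})$ depends on $G_i$ only through $n$ and $m$. First I would build the spectrum in three steps. By the complement lemma the $L$-spectrum of $\bar{G_i}$ is $n-\mu_1,\dots,n-\mu_{n-1},0$, and the $L$-spectrum of $K_p$ is $p$ with multiplicity $p-1$ together with $0$; hence by the union lemma the $L$-spectrum of $\bar{G_i}\cup K_p$ is $n-\mu_1,\dots,n-\mu_{n-1}$, then $p$ with multiplicity $p-1$, then $0$ with multiplicity $2$. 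Applying the cartesian-product lemma with the second factor $K_p$ (eigenvalues $p$ with multiplicity $p-1$, and $0$), the $L$-spectrum of $\tilde{G_i}$ consists of the values $n+p-\mu_j$ ($1\le j\le n-1$) each with multiplicity $p-1$, the values $n-\mu_j$ ($1\le j\le n-1$) each with multiplicity $1$, the value $2p$ with multiplicity $(p-1)^2$, the value $p$ with multiplicity $3(p-1)$, and the value $0$ with multiplicity $2$. One checks these multiplicities sum to $p(n+p)=n'$, and that the $\mu_j$ enter every forthcoming sum only through $\sum_{j=1}^{n-1}\mu_j=2m$.

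Next I would count edges of the union and of the cartesian product to get $2m'=p\big(n^2+np+2p^2-2n-2p-2m\big)$, so that
\[
\alpha:=\frac{2m'}{n'}=\frac{n^2+np+2p^2-2n-2p-2m}{n+p},
\]
a function of $n,m,p$ alone. The crux of the proof, and the step I expect to be the main obstacle, is the sign analysis: I must show, under $p\ge 2n$ and $\frac{2m}{p+n}<\min(\mu_{n-1},\lambda_{n-1})$, that the eigenvalue $2p$ exceeds $\alpha$ while every other eigenvalue is at most $\alpha$. Writing $\alpha=n+2p-\beta$ with $\beta=\frac{2np+2n+2p+2m}{n+p}$: the inequality $2p>\alpha$ is $\beta>n$, immediate from $p\ge 2n$ (then $np>n^2$); the inequalities $p\le\alpha$, $n-\mu_j\le\alpha$ and $0<\alpha$ follow routinely from $p\ge 2n$ and $2m\le n(n-1)$; and the delicate inequality $n+p-\mu_j\le\alpha$ for all $j\le n-1$ reduces (worst case $j=n-1$) to $\beta\le p+\mu_{n-1}$, i.e. to
\[
2m\ \le\ \mu_{n-1}(n+p)+\big(p(p-n-2)-2n\big),
\]
where $\frac{2m}{p+n}<\mu_{n-1}$ controls the first summand and $p\ge 2n$ makes the second nonnegative. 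The same computation with the $\lambda$'s handles $G_2$.

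With the signs settled, $\tilde{G_i}$ contributes $2p-\alpha$ on each $2p$-eigenvalue and $\alpha-\lambda$ on every other eigenvalue, so using $\sum_{j=1}^{n-1}\mu_j=2m$,
\begin{align*}
LE(\tilde{G_i})&=(p-1)^2(2p-\alpha)+(p-1)\big[(n-1)(\alpha-n-p)+2m\big]\\
&\quad+(n-1)(\alpha-n)+2m+3(p-1)(\alpha-p)+2\alpha.
\end{align*}
The right-hand side depends only on $n,m,p$ (and $\alpha=\alpha(n,m,p)$); since $G_1(n,m)$ and $G_2(n,m)$ have the same order and size, the identical evaluation for $G_2$ yields $LE(\tilde{G_1})=LE(\tilde{G_2})$. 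Possible coincidences among the listed eigenvalues do no harm, since $LE$ is summed over the spectrum with multiplicity; and, exactly as in Theorems 3.1--4.4, once one admissible $p$ is found every larger integer is admissible too, producing a whole sequence of such $L$-equienergetic pairs.
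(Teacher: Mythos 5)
Your proposal is correct and follows essentially the same route as the paper's own proof: assemble the $L$-spectrum of $(\bar{G_i}\cup K_p)\times K_p$ from the complement, union and Cartesian-product lemmas, compute $\tfrac{2m'}{n'}$, fix the signs of the eigenvalues of the form $n+p-\mu_j$ and $n-\mu_j$, and observe that the resulting expression depends on $G_i$ only through $n$ and $m$ (the paper leaves the final summation implicit, you write it out). The only minor caveat is your justification ``$p\ge 2n$ makes $p(p-n-2)-2n$ nonnegative,'' which fails for the degenerate case $n=2$, $p=4$ (where the theorem is trivial anyway since $G_1=G_2=K_2$); for $n\ge 3$ it holds and your argument goes through, matching the paper's (even less detailed) sign claims.
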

\begin{proof}
The $L$-spectra of the graphs $\bar{G_1}\cup K_p$ and $\bar{G_2}\cup K_p$ are respectively as
$\gamma_1, \gamma_2, \cdots, \gamma_{p+n}$ and $\theta_1, \theta_2, \cdots, \theta_{p+n},$ where
$$\gamma_i=\left\{\begin{array}{lr}n-\mu_1, &\mbox{if $1\leq i \leq {n-1}$};\\
0, &\mbox{if $i=n, p+n$};\\
p, &\mbox{if $n+1\leq i \leq{p+n-1}$}.
\end{array} \right.$$
and
$$\theta_i=\left\{\begin{array}{lr}n-\lambda_i, &\mbox{if $1\leq i \leq{n-1}$};\\
0, &\mbox{if $i=n, p+n$};\\
p, &\mbox{if $n+1\leq i \leq{p+n-1}$}
\end{array} \right.$$
Also $L$-spectra of $K_p$ is
$p=\tau_1=\tau_2=\cdots=\tau_{p-1}, 0=\tau_p.$
So by Lemma 1.1, the $L$-spectra of $\tilde{G_1}$ and $\tilde{G_2}$ are
$\gamma_i+\tau_j$ and $\theta_i+\tau_j$, where $1\leq i \leq{p+n}, 1\leq j \leq{p}$
with average vertex degree $\dfrac{2m^{\prime}}{n^{\prime}}=p-2-\dfrac{2m}{n+p}+\dfrac{p^2+n^2}{p+n}.$
For $i=1, 2, \cdots, n-1$, we have
$$n-\mu-i-\dfrac{2m^{\prime}}{n^{\prime}}=\dfrac{2(p+n)-2p^2}{p+n}+\dfrac{2m}{p+n}-\mu_i\leq 0$$ and
$$p+n-\mu_i-\dfrac{2m^{\prime}}{n^{\prime}}=\dfrac{p(n-p)+2(p+n)}{p+n}+\dfrac{2m}{p+n}-\mu_i\leq 0.$$
Similarly, $n-\lambda_i-\dfrac{2m^{\prime}}{n^{\prime}}\leq 0$ and
$p+n-\lambda_i-\dfrac{2m^{\prime}}{n^{\prime}}\leq 0.$\\
\indent\indent It is now easy to see that $LE(\tilde{G_1})$=$LE(\tilde{G_2})$.
\end{proof}
\indent In this section, till now, we have obtained families of disconnected graphs having have same $L$-energy ($Q$-energy) from any given pair of connected graphs by using cartesian product of graphs. In the next part of this section we will use cartesian product to construct families of connected graphs having same $L$-energy.
\begin{theorem}
Let $G_1$ and $G_2$ be two connected graphs having $L$-spectra respectively $0=\mu_n<\mu_{n-1}\leq\cdots\leq\mu_1$ and $0=\lambda_n<\lambda_{n-1}\leq\cdots\leq\lambda_1$, with algebraic connectivity greater than two.
For a positive integer $p$ such that
$$\frac{2m}{p+n}<\min(\mu_{n-1}-2, \lambda_{n-1}-2),$$ we have $LE(\tilde{G_1})$=$LE(\tilde{G_2})$, where $\tilde{G_i}=(\bar{G_i}\vee{K_p})\times K_p$, $i=1, 2$.
\end{theorem}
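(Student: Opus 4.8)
The plan is to compute the $L$-spectrum of $\tilde{G_i}=(\bar{G_i}\vee K_p)\times K_p$ in three successive stages and then to observe that $LE(\tilde{G_i})$ decomposes into a collection of terms that are manifestly the same for $i=1$ and $i=2$, together with terms that involve the eigenvalues $\mu_j$ (respectively $\lambda_j$) only through the trace identity $\sum_{j=1}^{n-1}\mu_j=2m=\sum_{j=1}^{n-1}\lambda_j$. Since only this one linear functional of the spectrum survives, the two Laplacian energies must agree.

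First I would build the spectrum. By Lemma 2.5 the $L$-spectrum of $\bar{G_i}$ is $n-\mu_1,\dots,n-\mu_{n-1},0$, while that of $K_p$ is $p$ with multiplicity $p-1$ together with $0$. Feeding these into Lemma 2.3 gives the $L$-spectrum of $\bar{G_i}\vee K_p$ on $n+p$ vertices: the value $n+p$ with multiplicity $p$, the values $p+n-\mu_j$ for $j=1,\dots,n-1$, and $0$ once. Applying Lemma 1.1 to take the cartesian product with $K_p$ (that is, adding $0$ once and $p$ with multiplicity $p-1$ to each of the previous eigenvalues) yields the $L$-spectrum of $\tilde{G_i}$: $n+p$ with multiplicity $p$, $2p+n$ with multiplicity $p(p-1)$, $p+n-\mu_j$ once and $2p+n-\mu_j$ with multiplicity $p-1$ for each $j=1,\dots,n-1$, $0$ once, and $p$ with multiplicity $p-1$. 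Counting edges via the join and product formulas (or, equivalently, via the trace) gives $n'=p(n+p)$ and average degree $\bar{d}:=\dfrac{2m'}{n'}=n+2p-2-\dfrac{2m}{n+p}$, which depends only on $n$, $m$, $p$ and hence is identical for $G_1$ and $G_2$.

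The decisive step is to fix the sign of $(\text{eigenvalue})-\bar{d}$ for those eigenvalues that genuinely involve the $\mu_j$. Using the hypothesis, $\mu_j\ge\mu_{n-1}>2+\frac{2m}{n+p}$ for every $j=1,\dots,n-1$, and a one-line computation gives $p+n-\mu_j-\bar{d}=-p+\bigl(2+\tfrac{2m}{n+p}-\mu_j\bigr)<0$ and $2p+n-\mu_j-\bar{d}=2+\tfrac{2m}{n+p}-\mu_j<0$; the same inequalities hold with $\lambda_j$ in place of $\mu_j$. Hence $\sum_{j=1}^{n-1}\bigl|p+n-\mu_j-\bar{d}\bigr|+(p-1)\sum_{j=1}^{n-1}\bigl|2p+n-\mu_j-\bar{d}\bigr|=\bigl(2m+(n-1)(p-2-\tfrac{2m}{n+p})\bigr)+(p-1)\bigl(2m+(n-1)(-2-\tfrac{2m}{n+p})\bigr)$, an expression depending on $G_i$ only through $m$. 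The remaining contributions to $LE(\tilde{G_i})$, namely $p\,|n+p-\bar{d}|+p(p-1)\,|2p+n-\bar{d}|+|0-\bar{d}|+(p-1)\,|p-\bar{d}|$, contain no $\mu_j$ at all, so they are the same for $i=1,2$. Adding the two pieces yields $LE(\tilde{G_1})=LE(\tilde{G_2})$.

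The main obstacle is precisely this sign bookkeeping: one must use $\frac{2m}{p+n}<\mu_{n-1}-2$ (equivalently, algebraic connectivity exceeding $2+\frac{2m}{p+n}$) to push all $\mu_j$-dependent eigenvalues below the average degree $\bar{d}$, after which the $\mu_j$-dependence collapses to the single quantity $2m$ and all the remaining terms cancel between the two graphs. It is worth noting that the signs of the $i$-independent terms never need to be resolved — they cancel automatically — which is why no additional lower bound on $p$ is imposed in the hypothesis.
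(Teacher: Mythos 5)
Your proposal is correct and follows essentially the same route as the paper: compute the $L$-spectrum of $(\bar{G_i}\vee K_p)\times K_p$ via Lemmas 2.5, 2.3 and 1.1, obtain the average degree $n+2p-2-\frac{2m}{n+p}$, and use $\frac{2m}{p+n}<\mu_{n-1}-2$ to show the $\mu_j$-dependent eigenvalues lie below it, so that their contribution reduces to the trace $2m$. The only difference is that you carry out the final summation explicitly, where the paper stops at the sign analysis and declares the conclusion "easy to see."
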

\begin{proof}
The $L$-spectra of the graphs $\bar{G_1}\vee K_p$ and $\bar{G_2}\vee K_p$ are respectively as
$\gamma_1, \gamma_2, \cdots, \gamma_{p+n} $ and $\theta_1, \theta_2, \cdots, \theta_{p+n},$ where
$$\gamma_i=\left\{\begin{array}{lr}p+n-\mu_1, &\mbox{if $1\leq i \leq {n-1}$};\\
p+n, &\mbox{if $n\leq i\leq{p+n-1}$};\\
0, &\mbox{if $i=p+n$}
\end{array} \right.$$
and
$$\theta_i=\left\{\begin{array}{lr}p+n-\lambda_i, &\mbox{if $1\leq i \leq {n-1}$};\\
p+n, &\mbox{if $n\leq i\leq{p+n-1}$};\\
0, &\mbox{if $i=p+n$}.
\end{array} \right.$$
Also $L$-spectra of $K_p$ is
$p=\tau_1=\tau_2=\cdots=\tau_{p-1}, 0=\tau_p.$
So by Lemma 1.1, the $L$-spectra of $\tilde{G_1}$ and $\tilde{G_2}$ are
$\gamma_i+\tau_j$ and $\theta_i+\tau_j$, where $1\leq i \leq{p+n}, 1\leq j \leq p$
with average vertex degree $\dfrac{2m^{\prime}}{n^{\prime}}=2p+n-2-\dfrac{2m}{n+p}.$
For $i=1, 2, \cdots, n-1$, we have\\
$$2p+n-\mu_i-\dfrac{2m^{\prime}}{n^{\prime}}=-(\mu_i-\dfrac{2m}{p+n}-2)\leq 0$$
and $$p+n-\mu_i-\dfrac{2m^{\prime}}{n^{\prime}}=\dfrac{2m}{p+n}-\mu_i+2-p< 0.$$
Similarly, $2p+n-\lambda_i-\dfrac{2m^{\prime}}{n^{\prime}}\leq 0$ and
$p+n-\lambda_i-\dfrac{2m^{\prime}}{n^{\prime}}< 0.$\\
\indent It is now easy to see that $LE(\tilde{G_1})$=$LE(\tilde{G_2})$.
\end{proof}
\indent The following is a consequence of Theorem 4.6.
\begin{corollary}
Let $G_1(s,m)$ and $G_2(s,m)$ be two connected proper subgraphs of the complete graph $K_n$ having $L$-spectra respectively$0=\mu_s<\mu_{s-1}\leq\cdots\leq\mu_1$ and $0=\lambda_s<\lambda_{s-1}\leq\cdots\leq\lambda_1,$ with algebraic connectivity greater than two. For a positive integer $p$ such that  $$\frac{2m}{p+n}<\min(\mu_{n-1}-2, \lambda_{n-1}-2),$$ we have $LE(\tilde{G_1})$=$LE(\tilde{G_2})$, where $(G_i=(K_n-E(G_1))\vee K_p)\times K_p$ for $i=1, 2$.
\end{corollary}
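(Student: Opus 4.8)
The plan is to follow the template of the proof of Theorem 4.6, the only new ingredient being the spectrum of $K_n-E(G_i)$ supplied by Lemma 2.6. First I would record that, since $\mu_s=0$, the $L$-spectrum of $K_n-E(G_i)$ is $\{\,n-\mu_1,\dots,n-\mu_{s-1},\ \underbrace{n,\dots,n}_{n-s},\ 0\,\}$, and likewise $\{\,n-\lambda_1,\dots,n-\lambda_{s-1},\ \underbrace{n,\dots,n}_{n-s},\ 0\,\}$ for $G_2$. In fact $K_n-E(G_i)$ is just $\overline{G_i}$ viewed as a graph on $n$ vertices with $n-s$ isolated vertices, so the corollary is really the instance of Theorem 4.6 in which the ``base'' $n$-vertex graph is disconnected; this is why the hypothesis has to be phrased through the algebraic connectivity $\mu_{s-1}$ of the genuine graph $G_i$ rather than through an $n$-vertex $\mu_{n-1}$ (the printed ``$\mu_{n-1}-2$'' should read ``$\mu_{s-1}-2$'').

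Next I would push this spectrum through the two operations. Since the $L$-spectrum of $K_p$ is $\{\,p\ ((p-1)\text{-times}),\,0\,\}$, Lemma 2.3 gives that $(K_n-E(G_i))\vee K_p$ has $L$-spectrum $\{\,p+n-\mu_1,\dots,p+n-\mu_{s-1},\ (p+n)\ ((n+p-s)\text{-times}),\ 0\,\}$, and then the cartesian-product formula (Lemma 2.1) shows that $\tilde G_i=\bigl((K_n-E(G_i))\vee K_p\bigr)\times K_p$ has as its $L$-spectrum all sums $\gamma+\tau$, with $\gamma$ running over the eigenvalues just listed and $\tau\in\{\,p\ ((p-1)\text{-times}),0\,\}$. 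A short count gives $n'=p(n+p)$ vertices and, using $|E(K_n-E(G_i))|=\binom n2-m$, exactly $m'=p\bigl(\binom n2-m+\binom p2+np\bigr)+(n+p)\binom p2$ edges, whence
\[
\frac{2m'}{n'}=2p+n-2-\frac{2m}{n+p}.
\]
The decisive observation is already visible here: $n'$, $m'$ and hence $\tfrac{2m'}{n'}$ depend only on $n,p,m$, and the multiplicities in the spectrum depend only on $n,p,s$ — nothing in this data sees the individual eigenvalues of $G_i$.

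It then remains to evaluate $LE(\tilde G_i)=\sum|\gamma+\tau-\tfrac{2m'}{n'}|$. Using the hypothesis $\tfrac{2m}{p+n}<\mu_{s-1}-2$ (so $\mu_j>2+\tfrac{2m}{p+n}$ for every $j\le s-1$), together with $p$ being at least the stated threshold, one checks the blocks one at a time: $2p+n-\tfrac{2m'}{n'}=2+\tfrac{2m}{p+n}>0$, while every other eigenvalue — namely $2p+n-\mu_j$, $p+n$, $p+n-\mu_j$, $p$ and $0$ — lies below $\tfrac{2m'}{n'}$. Hence only the eigenvalue $2p+n$, of multiplicity $(n+p-s)(p-1)$, enters with a plus sign, the remaining $p(n+p)-(n+p-s)(p-1)$ eigenvalues enter with a minus sign, and replacing the sum of the latter by $2m'-(2p+n)(n+p-s)(p-1)$ via $\operatorname{tr} L(\tilde G_i)=2m'$ produces a closed expression for $LE(\tilde G_i)$ in terms of $n,p,s,m$ alone. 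Since $G_1$ and $G_2$ share all of $n,p,s,m$, this expression is the same for both, so $LE(\tilde G_1)=LE(\tilde G_2)$.

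The spectrum bookkeeping and the final algebraic simplification are routine; the one place that needs care — and what I would regard as the main obstacle — is the block-by-block sign analysis, in particular verifying $p+n-\tfrac{2m'}{n'}<0$ and $p-\tfrac{2m'}{n'}<0$, which (exactly as in Theorems 3.6 and 4.6) forces $p$ to exceed a threshold determined by $n$ and $m$. Beyond that threshold the inequalities hold uniformly, and that is what yields the promised infinite sequence of $L$-equienergetic pairs $(\tilde G_1,\tilde G_2)$.
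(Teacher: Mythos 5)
Your proposal is correct and is essentially the paper's intended route: the paper gives no separate proof of this corollary, deferring to the argument of Theorem 4.6 combined with Lemma 2.6, which is exactly what you carry out (spectrum of $K_n-E(G_i)$, join with $K_p$, cartesian product, average degree $2p+n-2-\tfrac{2m}{n+p}$, sign analysis, and the observation that everything besides $\sum_j\mu_j=2m$ is determined by $n,p,s,m$), including the correct reading of the misprinted $\mu_{n-1}-2$ as $\mu_{s-1}-2$. One small remark: the extra threshold on $p$ that you invoke to fix the signs of the $G$-independent blocks (the eigenvalues $p+n$, $p$, $2p+n$, $0$) is not actually needed for the equality, since those blocks contribute identically to $LE(\tilde{G_1})$ and $LE(\tilde{G_2})$ whatever their signs; only the $\mu_j$-dependent terms need determined signs, and these follow from the stated inequality alone.
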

\begin{theorem}
Let $G_1$ and $G_2$ be two connected graphs having $L$-spectra respectively $0=\mu_n<\mu_{n-1}\leq\cdots\leq\mu_1$ and $0=\lambda_n<\lambda_{n-1}\leq\cdots\leq\lambda_1$,.
For a positive integer $p\geq n+4$ such that
$$\frac{2m}{p+n}<\min(\mu_{n-1}, \lambda_{n-1}),$$ we have $LE(\tilde{G_1})$=$LE(\tilde{G_2})$, where $\tilde{G_i}=(G_i\vee\bar{K_p})\times K_p$, $i=1, 2$.
\end{theorem}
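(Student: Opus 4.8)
The plan is to follow the template already used for Theorems 4.1, 4.5 and 4.6: determine the full $L$-spectrum of $\tilde{G_i}$ with all multiplicities (via Lemma 1.1 and Lemma 2.3), compute the average vertex degree $\frac{2m'}{n'}$ (which will turn out to depend only on $n$, $m$, $p$ and hence be common to $\tilde{G_1}$ and $\tilde{G_2}$), fix the sign of $\lambda-\frac{2m'}{n'}$ for each eigenvalue $\lambda$, and then evaluate $\sum_{\lambda}|\lambda-\frac{2m'}{n'}|$. Once the absolute values are resolved, the only part of $LE(\tilde{G_i})$ that depends on $G_i$ will be a fixed multiple of $\sum_{j=1}^{n-1}\mu_j=2m$, which is the same number for both graphs.

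First I would compute the spectrum. By Lemma 2.3, since $\bar{K_p}$ has $L$-spectrum $0$ with multiplicity $p$, the $L$-spectrum of $G_1\vee\bar{K_p}$ is $p+\mu_1,\dots,p+\mu_{n-1}$ together with $n$ ($(p-1)$ times), $p+n$, and $0$ (and likewise with $\lambda_j$ for $G_2$). The $L$-spectrum of $K_p$ is $p$ ($(p-1)$ times) and $0$, so by Lemma 1.1 the $L$-spectrum of $\tilde{G_1}=(G_1\vee\bar{K_p})\times K_p$ is the multiset of all sums of an eigenvalue of $G_1\vee\bar{K_p}$ with an eigenvalue of $K_p$; collecting equal terms, this is $2p+\mu_i$ (multiplicity $p-1$) and $p+\mu_i$ (multiplicity $1$) for $i=1,\dots,n-1$, together with $p+n$ (multiplicity $(p-1)^2+1$), $2p+n$ (multiplicity $p-1$), $p$ (multiplicity $p-1$), $n$ (multiplicity $p-1$), and $0$. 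One checks this accounts for $p(n+p)$ eigenvalues, as it must.

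Since vertex degrees add under the Cartesian product, $\frac{2m'}{n'}=(p-1)+\frac{2(m+np)}{n+p}=(p-1)+\frac{2m}{n+p}+\frac{2np}{n+p}$, the same value for $\tilde{G_1}$ and $\tilde{G_2}$. The core of the argument is the sign bookkeeping. The hypothesis $p\geq n+4$ yields $\frac{2np}{n+p}\geq n+1$ (valid since $n\geq2$, which is implicit in the hypothesis), while $\mu_i\leq n$ always and $\frac{2m}{n+p}<\mu_{n-1}\leq\mu_i$ by assumption. With these facts one checks, class by class, that $2p+\mu_i$ and $2p+n$ lie strictly above $\frac{2m'}{n'}$ — for instance $2p+\mu_i-\frac{2m'}{n'}=\frac{p(p-n)}{n+p}+1+\bigl(\mu_i-\frac{2m}{n+p}\bigr)>0$ — whereas $p+\mu_i$, $p+n$, $p$, $n$ and $0$ all lie below $\frac{2m'}{n'}$ — for instance $p+\mu_i-\frac{2m'}{n'}=1+\mu_i-\frac{2m}{n+p}-\frac{2np}{n+p}\leq 1+n-\frac{2m}{n+p}-(n+1)<0$.

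Finally, substituting these signs into $LE(\tilde{G_1})=\sum_{\lambda}|\lambda-\frac{2m'}{n'}|$ and gathering the $\mu_i$-terms, the coefficient of $\mu_i$ is $(p-1)-1=p-2$ (the $p-1$ from the class $2p+\mu_i$, the $-1$ from $p+\mu_i$), so the $G_1$-dependent contribution equals $(p-2)\sum_{i=1}^{n-1}\mu_i=2m(p-2)$, and every remaining term is a function of $n$, $p$ and $\frac{2m'}{n'}$ alone, hence of $n$, $m$, $p$ alone. The identical computation for $\tilde{G_2}$ replaces $\sum\mu_i$ by $\sum\lambda_i=2m$, so $LE(\tilde{G_1})=LE(\tilde{G_2})$. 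I expect the main obstacle to be purely organisational: correctly tracking the multiplicities of the $p(n+p)$ eigenvalues of the threefold composition, and checking that the two hypotheses $p\geq n+4$ and $\frac{2m}{p+n}<\min(\mu_{n-1},\lambda_{n-1})$ are exactly what separates the eigenvalue classes $p+n$, $p$, $n$ (below $\frac{2m'}{n'}$) from the classes $2p+\mu_i$, $2p+n$ (above it).
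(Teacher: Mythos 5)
Your proposal is correct and follows essentially the same route as the paper's own proof: compute the $L$-spectrum of $G_i\vee\bar{K_p}$ via the join lemma, add the $K_p$ Cartesian-product shifts, use the common average degree $p-1+\frac{2m}{n+p}+\frac{2np}{n+p}$, resolve the signs class by class (your explicit use of $p\geq n+4$ to get $\frac{2np}{n+p}\geq n+1$ and $\mu_i\leq n$ is in fact a cleaner justification than the paper's bare assertion that $p+\mu_i-\frac{2m'}{n'}\leq 0$), and reduce the $G_i$-dependence to $(p-2)\sum_i\mu_i=2m(p-2)$. Your multiplicity bookkeeping (total $p(n+p)$ eigenvalues, with $p+n$ of multiplicity $(p-1)^2+1$) matches the paper's computation, so no gap remains.
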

\begin{proof}
The $L$-spectra of the graphs $G_1\vee\bar{K_p}$ and $G_2\vee\bar{K_p}$ are respectively as
$\gamma_1, \gamma_2, \cdots, \gamma_{p+n}$ and $\theta_1, \theta_2, \cdots, \theta_{p+n},$ where
$$\gamma_i=\left\{\begin{array}{lr}p+\mu_1, &\mbox{if $1\leq i \leq {n-1}$};\\
p+n, &\mbox{if $i=n$};\\
n, &\mbox{if $n+1\leq i \leq{p+n-1}$};\\
0, &\mbox{if $i=p+n.$}
\end{array} \right.$$
and
$$\theta_i=\left\{\begin{array}{lr}p+\lambda_i, &\mbox{if $1\leq i \leq{n-1}$};\\
p+n, &\mbox{if $i=n$};\\
n, &\mbox{if ${n+1}\leq i \leq{p+n-1}$};\\
0, &\mbox{if $i=p+n.$}
\end{array} \right.$$
Also, $L$-spectra of $K_p$ is
$p=\tau_1=\tau_2=\cdots=\tau_{p-1}, 0=\tau_p.$
So, by Lemma 1.1, the $L$-spectra of $\tilde{G_1}$ and $\tilde{G_2}$ is
$\gamma_i+\tau_j$ and $\theta_i+\tau_j$, where $1\leq i \leq{p+n}, 1\leq j \leq p,$
with average vertex degree $\dfrac{2m^{\prime}}{n^{\prime}}=p-1+\dfrac{2m}{n+p}+\dfrac{2pn}{p+n}.$
For $i=1, 2, \cdots, n-1$, we have
$$2p+\mu_i-\dfrac{2m^{\prime}}{n^{\prime}}=\dfrac{p(p-n)}{p+n}+\mu_i-\dfrac{2m}{p+n}+1\geq 0$$ and
$$p+\mu_i-\dfrac{2m^{\prime}}{n^{\prime}}=\dfrac{(p+n)\mu_i-2pn}{p+n}+\dfrac{(p+n)-2m}{p+n}\leq 0.$$
Similarly, $2p+\lambda_i-\dfrac{2m^{\prime}}{n^{\prime}}\geq 0$ and
$p+\lambda_i-\dfrac{2m^{\prime}}{n^{\prime}}\leq 0.$ \\
Therefore,
\begin{align*}
&LE(\tilde{G_1})=\sum\limits_{i=1}^{p+n}\sum\limits_{j=1}^{p}|\gamma_i+\tau_j-\dfrac{2m^{\prime}}{n^{\prime}}|\\&
=(p-1)\sum\limits_{i=1}^{n-1}(2p+\mu_i-\dfrac{2m^{\prime}}{n^{\prime}})+(p-1)\sum\limits_{i=n+1}^{p+n-1}|p+n-\dfrac{2m^{\prime}}{n^{\prime}}|\\&
+(p-1)|2p+n-\dfrac{2m^{\prime}}{n^{\prime}}|+\sum\limits_{i=1}^{n+p}|\gamma_i-\dfrac{2m^{\prime}}{n^{\prime}}|+(p-1)|p-\dfrac{2m^{\prime}}{n^{\prime}}|\\&
=(p-1)(n-1)(2p-\dfrac{2m^{\prime}}{n^{\prime}})+2m(p-2)+(p-1)^2|p+n-\dfrac{2m^{\prime}}{n^{\prime}}|\\&+(p-1)|2p+n-\dfrac{2m^{\prime}}{n^{\prime}}|
+(p-1)|p-\dfrac{2m^{\prime}}{n^{\prime}}|+(n-1)(\dfrac{2m^{\prime}}{n^{\prime}}-p)\\&+(p-1)|n-\dfrac{2m^{\prime}}{n^{\prime}}|+|p+n-\dfrac{2m^{\prime}}{n^{\prime}}|+\dfrac{2m^{\prime}}{n^{\prime}}.
\end{align*}
Also,
\begin{align*}
&LE(\tilde{G_2})=\sum\limits_{i=1}^{p+n}\sum\limits_{j=1}^{p}|\theta_i+\tau_j-\dfrac{2m^{\prime}}{n^{\prime}}|\\&
=(p-1)(n-1)(2p-\dfrac{2m^{\prime}}{n^{\prime}})+2m(p-2)+(p-1)^2|p+n-\dfrac{2m^{\prime}}{n^{\prime}}|\\&+(p-1)|2p+n-\dfrac{2m^{\prime}}{n^{\prime}}|
+(p-1)|p-\dfrac{2m^{\prime}}{n^{\prime}}|+(n-1)(\dfrac{2m^{\prime}}{n^{\prime}}-p)\\&+(p-1)|n-\dfrac{2m^{\prime}}{n^{\prime}}|+|p+n-\dfrac{2m^{\prime}}{n^{\prime}}|+\dfrac{2m^{\prime}}{n^{\prime}}.
\end{align*}
\indent Clearly, $LE(\tilde{G_1})=LE(\tilde{G_2})$
\end{proof}
\indent From Theorem 2.1, we have the following observation.
\begin{corollary}
Let $G_1$ and $G_2$ be two connected graphs having L-spectra respectively $0=\mu_n<\mu_{n-1}\leq\cdots\leq\mu_1$ and $0=\lambda_n<\lambda_{n-1}\leq\cdots\leq\lambda_1$,.
For a positive integer $p\geq n$ such that  $$\frac{2m}{p+n}<\min(\mu_{n-1}, \lambda_{n-1}),$$ we have $LE((G_1\vee
K_p)\cup\bar{K_p})=LE((G_2\vee K_p)\cup\bar{K_p})$.
\end{corollary}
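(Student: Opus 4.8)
The plan is to obtain this as a direct application of Theorem 3.1, taking as the base pair not $G_1,G_2$ themselves but the join graphs $H_i:=G_i\vee K_p$, $i=1,2$. Since the join of two nonempty graphs is connected, $H_1$ and $H_2$ are connected; and as $G_1,G_2$ share the order $n$ and the size $m$, the graphs $H_1,H_2$ share the order $\tilde n=n+p$ and the size $\tilde m=m+\binom p2+np$, so they are an admissible input for Theorem 3.1.

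First I would record the $L$-spectrum of $H_i$. Applying Lemma 2.3 with the second factor equal to $K_p$ (whose $L$-spectrum is $p$ with multiplicity $p-1$ together with a single $0$), the $L$-spectrum of $H_1$ consists of $p+\mu_1,\dots,p+\mu_{n-1}$, the value $n+p$ with multiplicity $p$, and a single $0$ (likewise with the $\lambda_j$ for $H_2$). Using the standard bound $\mu_{n-1}\le\mu_1\le n<n+p$, the smallest nonzero Laplacian eigenvalue of $H_1$ is $p+\mu_{n-1}>0$, and similarly that of $H_2$ is $p+\lambda_{n-1}>0$; hence $H_1,H_2$ satisfy the connectivity hypothesis of Theorem 3.1.

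Next I would verify that the spectral-gap hypothesis of Theorem 3.1, for the pair $H_1,H_2$ with the same integer $p$, follows from the hypotheses of the corollary. That hypothesis reads $\dfrac{2\tilde m}{p+\tilde n}<\min\bigl(p+\mu_{n-1},\,p+\lambda_{n-1}\bigr)$, i.e.\ $\dfrac{2\tilde m}{p+\tilde n}-p<\min(\mu_{n-1},\lambda_{n-1})$. A short computation gives
\[
\frac{2\tilde m}{p+\tilde n}-p=\frac{2m+p(p-1)+2np}{n+2p}-p=\frac{2m-p(p+1-n)}{n+2p},
\]
and for $p\ge n$ one has $p+1-n\ge 1$, so (using $2m>0$)
\[
\frac{2m-p(p+1-n)}{n+2p}<\frac{2m}{n+2p}<\frac{2m}{n+p}<\min(\mu_{n-1},\lambda_{n-1}),
\]
the last inequality being precisely the assumption $\frac{2m}{p+n}<\min(\mu_{n-1},\lambda_{n-1})$. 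Thus Theorem 3.1 applies to $H_1,H_2$ and gives $LE(H_1\cup\bar K_p)=LE(H_2\cup\bar K_p)$, which is exactly $LE((G_1\vee K_p)\cup\bar K_p)=LE((G_2\vee K_p)\cup\bar K_p)$, as required.

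The only genuine step is the last paragraph's check that passing from $G_i$ to $G_i\vee K_p$ does not destroy the gap condition; the rest is bookkeeping with Lemma 2.3. One could equally run the direct computation: the $L$-spectrum of $(G_i\vee K_p)\cup\bar K_p$ is $0$ with multiplicity $p+1$, $n+p$ with multiplicity $p$, and $p+\mu_j$ for $1\le j\le n-1$, with average degree $\tfrac{2m+p(p-1)+2np}{n+2p}$; the displayed inequality guarantees that $p+\mu_j$ exceeds this average for every $j$, so after using $\sum_{j=1}^{n-1}\mu_j=2m$ the quantity $LE((G_i\vee K_p)\cup\bar K_p)$ reduces to an expression in $n,m,p$ alone, hence is the same for $i=1$ and $i=2$. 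The possible pitfall on that route is the sign of $p+\mu_j-\frac{2m'}{n'}$, which is exactly what the inequality above controls.
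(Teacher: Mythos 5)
Your proof is correct, and it is worth noting how it relates to the paper's own argument. The paper proves this corollary by a direct computation: it writes down the $L$-spectrum of $(G_i\vee K_p)\cup\bar K_p$ together with the average degree $\frac{2m'}{n'}=\frac{2m+2pn+p(p-1)}{2p+n}$, checks the sign of $p+\mu_i-\frac{2m'}{n'}$ (which it expresses as $\frac{p(p-n+1)}{2p+n}+\mu_i-\frac{2m}{n+2p}\geq 0$ using $p\geq n$), and then says the energy equality follows ``proceeding as in Theorem 2.1'' (a mislabeled back-reference to the earlier union theorem). You instead package the same content as a genuine reduction: take $H_i=G_i\vee K_p$ as a new base pair of connected graphs with common order $n+p$ and size $m+\binom p2+np$, observe via Lemma 2.3 that the algebraic connectivity of $H_i$ is $p+\mu_{n-1}$ (resp.\ $p+\lambda_{n-1}$, using $\mu_{n-1}\le\mu_1\le n$), verify that the gap hypothesis of Theorem 3.1 transfers because $\frac{2\tilde m}{p+\tilde n}-p=\frac{2m-p(p+1-n)}{n+2p}<\frac{2m}{n+p}<\min(\mu_{n-1},\lambda_{n-1})$, and then invoke Theorem 3.1 as a black box. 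The crucial inequality is the same in both treatments, so the mathematics is equivalent, but your version buys modularity — no re-derivation of spectra or energies of the compound graph is needed — and it also gets the bookkeeping right where the paper slips (the eigenvalue $0$ of $(G_i\vee K_p)\cup\bar K_p$ has multiplicity $p+1$, not $p$ as listed in the paper's proof, and the paper's displayed sign inequality misprints $p+\mu_i$ as $p+n$). Your closing remark correctly identifies the only real pitfall of the direct route, namely the sign of $p+\mu_j-\frac{2m'}{n'}$, which is exactly what your transferred gap condition controls.
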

\begin{proof}
The $L$-spectra of the graphs $(G_1\vee K_p)\cup\bar{K_p}$ and $(G_2\vee K_p)\cup\bar{K_p}$ are respectively
$p+\mu_1, p+\mu_2, \cdots, p+\mu_{n-1}, (p+n) (p-times),0 (p-times)$ and
$p+\lambda_1, p+\lambda_2, \cdots, p+\lambda_{n-1}, (p+n) (p-times), 0 (p-times),$
with average vertex degree
$\dfrac{2m^{\prime}}{n^{\prime}}=\dfrac{2pn}{2p+n}+\dfrac{2m}{2p+n}+\dfrac{p(p-1)}{2p+n}.$
So for $i=1, 2, \cdots, n-1$, we have
$$p+n-\dfrac{2m^{\prime}}{n^{\prime}}=\dfrac{p(p-n+1)}{2p+n}+\mu_i-\dfrac{2m}{n+2p}\geq 0$$ and
$$p+\lambda_i-\dfrac{2m^{\prime}}{n^{\prime}}\geq 0.$$
Therefore proceeding as in Theorem 2.1, it can be seen that $LE((G_1\vee K_p)\cup\bar{K_p})=LE((G_2\vee K_p)\cup\bar{K_p})$.
\end{proof}
\indent For any two graphs $G_1$ and $G_2$ having same number of vertices and edges, it is not always true that  $LE(G_1\times K_p)=LE(G_2\times K_p)$. As an example consider the graphs $G_1$ and $G_2$ (Figure 1). By direct calculation it can be seen that for $p=4$
, $LE(G_1\times K_p)=41.70818\neq 42.05078=LE(G_2\times K_p)$; for $p=7$, $LE(G_1\times K_p)=83.4164\neq 84.1016=LE(G_2\times K_p)$; and for $p=6$,
$LE(G_1\times K_p)=69.5139\neq 70.0849=LE(G_2\times K_p).$ However, for a certain class of graphs, whose algebraic connectivity when added to one becomes greater or equal to average vertex degree the above equality holds for $p> n$, which can be seen in the following result.\\

\unitlength 1mm 
\linethickness{0.4pt}
\ifx\plotpoint\undefined\newsavebox{\plotpoint}\fi 
\begin{picture}(89.5,37.75)(0,0)
\put(11,32.5){\line(1,0){19}}
\put(11,32.5){\line(0,-1){8}}
\put(11,24.5){\line(1,0){17.5}}
\put(11.25,24.25){\line(0,-1){6.75}}
\put(28.25,24.5){\line(0,-1){7}}
\put(48.5,32.5){\line(1,0){25.5}}
\put(74,32.5){\line(0,-1){9}}
\put(74,23.75){\line(1,0){10.75}}
\put(59.25,32.25){\line(0,-1){7.25}}
\put(11,32.5){\circle*{1.12}}
\put(11.25,24.5){\circle*{1.12}}
\put(11.25,17.75){\circle*{1.58}}
\put(28.25,24){\circle*{1}}
\put(28.5,17.75){\circle*{1}}
\put(30,32){\circle*{1}}
\put(48.5,32.25){\circle*{1.5}}
\put(59,32.25){\circle*{1.12}}
\put(59.25,25){\circle*{1.12}}
\put(74,23.75){\circle*{1.5}}
\put(84.5,23.5){\circle*{1.5}}
\put(73.75,32.25){\circle*{1.58}}
\put(42.25,5.25){\makebox(0,0)[cc]{Figure 1}}
\put(29,36.25){\makebox(0,0)[cc]{$v_2$}}
\put(8.5,24.25){\makebox(0,0)[cc]{$v_3$}}
\put(31.25,25){\makebox(0,0)[cc]{$v_4$}}
\put(10.75,15.5){\makebox(0,0)[cc]{$v_5$}}
\put(29.5,16.75){\makebox(0,0)[cc]{}}
\put(32.25,17.5){\makebox(0,0)[cc]{$v_6$}}
\put(20,11.75){\makebox(0,0)[cc]{$G_1$}}
\put(47.75,37){\makebox(0,0)[cc]{$u_1$}}
\put(58.75,37){\makebox(0,0)[cc]{$u_2$}}
\put(74,36.5){\makebox(0,0)[cc]{$u_3$}}
\put(59.25,20.5){\makebox(0,0)[cc]{$u_4$}}
\put(74,19){\makebox(0,0)[cc]{$u_5$}}
\put(89.5,23.25){\makebox(0,0)[cc]{$u_6$}}
\put(69.25,11){\makebox(0,0)[cc]{$G_2$}}
\put(10.25,37.75){\makebox(0,0)[cc]{$v_1$}}
\end{picture}
\begin{theorem}
Let $G_1$ and $G_2$ be two connected graphs having $L$-spectra respectively $0=\mu_n<\mu_{n-1}\leq\cdots\leq\mu_1$ and $0=\lambda_n<\lambda_{n-1}\leq\cdots\leq\lambda_1$, with $\min(\mu_{n-1}, \lambda_{n-1})\geq \dfrac{2m}{n}+1$, then for $p> n$, $LE(G_1\times K_p)=LE(G_2\times K_p)$.
\end{theorem}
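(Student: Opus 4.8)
The plan is to imitate the template used in the preceding theorems of this section: write down the $L$-spectrum of $G_i\times K_p$ using Lemma 1.1, decide the sign of each eigenvalue minus the average vertex degree of the product, split the sum defining $LE$ according to these signs, and then collapse it with $\sum_{k}\mu_k=2m$ into a closed form depending only on $m$, $n$ and $p$. Since $G_1$ and $G_2$ have the same number of vertices and edges, this closed form will be the same for both, giving $LE(G_1\times K_p)=LE(G_2\times K_p)$.

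Concretely, the $L$-spectrum of $K_p$ is $p$ with multiplicity $p-1$ together with a single $0$, so by Lemma 1.1 the $L$-spectrum of $\tilde{G_1}:=G_1\times K_p$ consists of the values $\mu_i+p$, each with multiplicity $p-1$, and the values $\mu_i$, each with multiplicity $1$, for $i=1,\dots,n$. Hence $\tilde{G_1}$ has $n^{\prime}=np$ vertices and $2m^{\prime}=2mp+np(p-1)$, so $\dfrac{2m^{\prime}}{n^{\prime}}=\dfrac{2m}{n}+p-1$, and the analogous description holds for $\tilde{G_2}:=G_2\times K_p$ with $\lambda_i$ in place of $\mu_i$. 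Three sign facts are then needed. For $1\le i\le n-1$ the hypothesis $\mu_{n-1}\ge\frac{2m}{n}+1$ gives $\mu_i+p-\dfrac{2m^{\prime}}{n^{\prime}}=\mu_i-\dfrac{2m}{n}+1\ge 0$. For $i=n$ the eigenvalue $\mu_n+p=p$ satisfies $p-\dfrac{2m^{\prime}}{n^{\prime}}=1-\dfrac{2m}{n}\le 0$, since a connected graph on $n\ge 2$ vertices has $m\ge n-1$ and therefore $\frac{2m}{n}\ge 1$. Finally, since every Laplacian eigenvalue of a graph on $n$ vertices satisfies $\mu_i\le n$, and since $p\ge n+1$, we get $\mu_i-\dfrac{2m^{\prime}}{n^{\prime}}=\mu_i-\dfrac{2m}{n}-p+1\le n-p+1\le 0$ for every $i$. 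The same three inequalities hold for $\tilde{G_2}$.

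With the signs settled I would split
\begin{align*}
LE(\tilde{G_1})=(p-1)\sum_{i=1}^{n}\Bigl|\mu_i+p-\frac{2m^{\prime}}{n^{\prime}}\Bigr|+\sum_{i=1}^{n}\Bigl|\mu_i-\frac{2m^{\prime}}{n^{\prime}}\Bigr|.
\end{align*}
In the first sum, the terms with $i\le n-1$ contribute $\sum_{i=1}^{n-1}\bigl(\mu_i-\frac{2m}{n}+1\bigr)=\frac{2m}{n}+n-1$ (using $\sum_{i=1}^{n-1}\mu_i=2m$), and the $i=n$ term contributes $\frac{2m}{n}-1$; the second sum equals $n\cdot\frac{2m^{\prime}}{n^{\prime}}-2m=n(p-1)$. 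Collecting everything yields $LE(\tilde{G_1})=(p-1)\bigl(\frac{4m}{n}+2n-2\bigr)$, and the identical computation gives the same value for $LE(\tilde{G_2})$, so the two are equal. The only genuinely delicate point is the middle sign check, i.e. verifying that $\frac{2m}{n}\ge 1$; this is exactly where connectedness (equivalently $m\ge n-1$) is used, while the restriction $p>n$ is precisely what forces $\mu_i-\frac{2m^{\prime}}{n^{\prime}}\le 0$. Everything else is routine bookkeeping.
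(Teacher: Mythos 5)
Your proof is correct and follows essentially the same route as the paper: expand the $L$-spectrum of $G_i\times K_p$ via the Cartesian-product lemma, compute the average degree $\tfrac{2m}{n}+p-1$, settle the signs, and reduce both energies to a closed form depending only on $n$, $m$, $p$. You are in fact more careful than the paper, which drops the absolute value in the term $\bigl|\mu_n+p-\tfrac{2m^{\prime}}{n^{\prime}}\bigr|=\bigl|1-\tfrac{2m}{n}\bigr|$ with the wrong sign and so arrives at the value $2n(p-1)$ instead of your correct $(p-1)\bigl(\tfrac{4m}{n}+2n-2\bigr)$; since that slip is symmetric in $G_1$ and $G_2$, the equality $LE(G_1\times K_p)=LE(G_2\times K_p)$ is unaffected, but your version of the computation is the one to keep.
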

\begin{proof}
The $L$-spectra of the graphs $G_1\times K_p$ and $G_2\times K_p$ are respectively
$\mu_i+\tau_j$ and $\lambda_i+\tau_j$, where $1\leq i \leq n, 1\leq j \leq p$ and $\tau_j$ is as in Theorem 4.8.
With average degree $\dfrac{2m^{\prime}}{n^{\prime}}=\dfrac{2m}{n}+p-1.$
Therefore,
\begin{align*}
LE(G_1\times K_p)&
=\sum\limits_{i=1}^{n}\sum\limits_{j=1}^{p}|\mu_i+\tau_j-\dfrac{2m^{\prime}}{n^{\prime}}|\\&
=(p-1)\sum\limits_{i=1}^{n}|\mu_i+p-\dfrac{2m}{n}-p+1|+\sum\limits_{i=1}^{n}|\mu_i-\dfrac{2m}{n}-p+1|\\&
=(p-1)\sum\limits_{i=1}^{n}(\mu_i-\dfrac{2m}{n}+1)+\sum\limits_{i=1}^{n}(\dfrac{2m}{n}+p-1-\mu_i)\\&
=2m(p-2)+n(p-2)(1-\dfrac{2m}{n})+pn.
\end{align*}
Also
\begin{align*}
LE(G_2\times K_p)&
=\sum\limits_{i=1}^{n}\sum\limits_{j=1}^{p}|\lambda_i+\tau_j-\dfrac{2m^{\prime}}{n^{\prime}}|\\&
=2m(p-2)+n(p-2)(1-\dfrac{2m}{n})+pn\\&
=LE(G_1\times K_p).
\end{align*}
\end{proof}
\indent We have seen that given a pair of graphs it is always possible to obtain a sequence of graph pairs having same $L-energy$, and thus we have the following observation.
\begin{theorem}
Let $G_1(n,m)$ and $G_1^{\prime}(n,m)$ be two $L$-equienergetic graphs having $L$-spectra respectively $0=\mu_n\leq\mu_{n-1}\leq\cdots\mu_1$ and $0=\mu_n^{\prime}\leq\mu_{n_1}^{\prime}\leq\cdots\mu_1^{\prime},$ and let $G_2(n,m)$ and $G_2^{\prime}(n,m)$ be another pair of $L$-equienergetic graphs having $L$-spectra respectively
$0=\lambda_n\leq\lambda_{n-1}\leq\cdots\lambda_1$ and $0=\lambda_n^{\prime}\leq\lambda_{n_1}^{\prime}\leq\cdots\lambda_1^{\prime},$ then $LE(G_1\vee G_2)=LE(G_1^{\prime}\vee G_2^{\prime})$.
\end{theorem}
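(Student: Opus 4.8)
The plan is to simply write down $LE(G_1\vee G_2)$ explicitly using Lemma 2.3, observe that the shift introduced by the join cancels exactly, and thereby express $LE(G_1\vee G_2)$ as a formula involving only $n$, $m$, $LE(G_1)$ and $LE(G_2)$; the conclusion then follows from the equienergetic hypotheses. First I would record the basic data: the graph $G_1\vee G_2$ has $n'=2n$ vertices and $m'=m+m+n^2=2m+n^2$ edges (every vertex of the first part is joined to every vertex of the second), so its average vertex degree is
\[
\frac{2m'}{n'}=\frac{2(2m+n^2)}{2n}=\frac{2m}{n}+n .
\]
By Lemma 2.3 applied with $n_1=n_2=n$, the $L$-spectrum of $G_1\vee G_2$ is
\[
2n,\ n+\lambda_1,\dots,n+\lambda_{n-1},\ n+\mu_1,\dots,n+\mu_{n-1},\ 0 .
\]

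Next I would compute $LE(G_1\vee G_2)$ term by term. The key point is that for each $i$ one has $\bigl(n+\mu_i\bigr)-\bigl(\tfrac{2m}{n}+n\bigr)=\mu_i-\tfrac{2m}{n}$, and likewise for the $\lambda_j$; the offset $n$ cancels with the $n$ coming from the average degree. Similarly $2n-\bigl(\tfrac{2m}{n}+n\bigr)=n-\tfrac{2m}{n}$ and $0-\bigl(\tfrac{2m}{n}+n\bigr)=-\bigl(\tfrac{2m}{n}+n\bigr)$. Hence
\begin{align*}
LE(G_1\vee G_2)&=\Bigl|n-\tfrac{2m}{n}\Bigr|+\sum_{i=1}^{n-1}\Bigl|\mu_i-\tfrac{2m}{n}\Bigr|+\sum_{j=1}^{n-1}\Bigl|\lambda_j-\tfrac{2m}{n}\Bigr|+\tfrac{2m}{n}+n .
\end{align*}
Now I would use that $\mu_n=0$, so that $\sum_{i=1}^{n-1}\bigl|\mu_i-\tfrac{2m}{n}\bigr|=LE(G_1)-\bigl|0-\tfrac{2m}{n}\bigr|=LE(G_1)-\tfrac{2m}{n}$, and likewise $\sum_{j=1}^{n-1}\bigl|\lambda_j-\tfrac{2m}{n}\bigr|=LE(G_2)-\tfrac{2m}{n}$. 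Substituting gives
\[
LE(G_1\vee G_2)=LE(G_1)+LE(G_2)+n-\tfrac{2m}{n}+\Bigl|n-\tfrac{2m}{n}\Bigr| .
\]

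The right-hand side depends on $G_1$ and $G_2$ only through $n$, $m$, $LE(G_1)$ and $LE(G_2)$. Running the identical computation for $G_1'\vee G_2'$ (which also has $2n$ vertices and $2m+n^2$ edges, and whose smallest Laplacian eigenvalues $\mu_n'=\lambda_n'=0$) yields the same formula with $LE(G_1'),LE(G_2')$ in place of $LE(G_1),LE(G_2)$. Since $LE(G_1)=LE(G_1')$ and $LE(G_2)=LE(G_2')$ by hypothesis, we conclude $LE(G_1\vee G_2)=LE(G_1'\vee G_2')$. I do not expect a genuine obstacle here: unlike Theorems 3.3--4.10 no auxiliary size condition on a parameter is needed, because the join shifts the nonzero part of each spectrum by exactly the value ($n$) that the average degree grows by, so all absolute values collapse cleanly. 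The only point requiring a line of care is the bookkeeping of the isolated eigenvalues $0$ (one from each $G_i$ disappears into the spectrum, a single $0$ survives in the join), which is exactly the step where the term $-\tfrac{2m}{n}$ is produced twice and $+\tfrac{2m}{n}$ once.
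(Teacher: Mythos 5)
Your proposal is correct and follows essentially the same route as the paper: apply the join lemma to get the spectrum $2n,\ n+\mu_i,\ n+\lambda_j,\ 0$ with average degree $\tfrac{2m}{n}+n$, observe the shift by $n$ cancels, and reduce each sum to $LE(G_i)-\tfrac{2m}{n}$. The only cosmetic difference is that you keep the term $\bigl|n-\tfrac{2m}{n}\bigr|$ unevaluated, while the paper simplifies it to $n-\tfrac{2m}{n}$ (valid since $2m\le n(n-1)$), yielding its closed form $LE(G_1)+LE(G_2)+2n-\tfrac{4m}{n}$; either way the expression depends only on $n$, $m$, $LE(G_1)$, $LE(G_2)$, so the conclusion follows.
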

\begin{proof}
If $H=G_1\vee G_2$ and $K=G_{1}^{\prime}\vee G_{2}^{\prime}$, then the $L$-spectra of $H$ and $K$ are respectively
$2n, n+\mu_1, \cdots, n+\mu_{n-1}, n+\lambda_1, n+\lambda_2, \cdots, n+\lambda_{n-1}, 0$ and
$2n, n+\mu_1^{\prime}, \cdots, n+\mu_{n-1}^{\prime}, n+\lambda_1^{\prime}, n+\lambda_2^{\prime}, \cdots, n+\lambda_{n-1}^{\prime}, 0,$
with average vertex degree $\dfrac{2m^{\prime}}{n^{\prime}}=\dfrac{2m+n^2}{n}.$
Since ($G_1$, $G_{1}^{\prime}$) and ($G_2$, $G_{2}^{\prime}$) are $L$-equienergetic graph pairs, therefore $LE(G_1)=LE(G_{1}^{\prime})$ and  $LE(G_2)=LE(G_{2}^{\prime}).$ Now,
\begin{align*}
LE(H)&
=|2n-\dfrac{2m+n^2}{n}|+\sum\limits_{i=1}^{n-1}|n+\mu_i-\dfrac{2m+n^2}{n}|\\&+\sum\limits_{i=1}^{n-1}|n+\lambda_i-\dfrac{2m+n^2}{n}|+|0-\dfrac{2m+n^2}{n}|\\&
=2n+LE(G_1)+LE(G_2)-\dfrac{4m}{n}
\end{align*}
and
\begin{align*}
LE(K)&=|2n-\dfrac{2m+n^2}{n}|+\sum\limits_{i=1}^{n-1}|n+\mu_i^{\prime}-\dfrac{2m+n^2}{n}|\\&+\sum\limits_{i=1}^{n-1}|n+\lambda_i^{\prime}-\dfrac{2m+n^2}{n}|+|0-\dfrac{2m+n^2}{n}|\\&
=2n+LE(G_{1}^{\prime})+LE(G_{2}^{\prime})-\dfrac{4m}{n}.
\end{align*}
Clearly the result follows.
\end{proof}
\indent The next result is a generalization of Theorem 4.11.
\begin{theorem}
Let $(G_1, G_{1}^{\prime}),(G_2, G_{2}^{\prime}),\cdots, (G_k, G_{k}^{\prime})$ be $k$ $L$-equienergetic graph pairs with same number of vertices $n$ and edges $m$, then
$LE(G_1\vee G_2\vee\cdots\vee G_k)=LE(G_{1}^{\prime}\vee G_{2}^{\prime}\vee\cdots\vee G_{k}^{\prime})$.
\end{theorem}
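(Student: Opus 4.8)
The plan is to prove the statement by induction on $k$, with Theorem 4.11 serving as the base case $k=2$; the inductive step amounts to analysing $LE\bigl((G_1\vee\cdots\vee G_{k-1})\vee G_k\bigr)$. In practice it is cleaner to establish, once and for all, a closed formula for the Laplacian energy of a $k$-fold join of graphs that all have $n$ vertices and $m$ edges, and then to read off the conclusion. So the first step is to determine the $L$-spectrum of $G_1\vee\cdots\vee G_k$. One can either iterate Lemma 2.3, or observe that the complement of $G_1\vee\cdots\vee G_k$ is the disjoint union $\bar G_1\cup\cdots\cup\bar G_k$ and then combine Lemmas 2.4 and 2.5. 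Writing $0=\mu_n^{(i)}<\mu_{n-1}^{(i)}\le\cdots\le\mu_1^{(i)}$ for the $L$-spectrum of $G_i$, either route gives that the $L$-spectrum of $G_1\vee\cdots\vee G_k$ consists of the values $(k-1)n+\mu_j^{(i)}$ for $1\le i\le k$ and $1\le j\le n-1$, the value $kn$ with multiplicity $k-1$, and the value $0$ once; a count confirms $k(n-1)+(k-1)+1=kn$ eigenvalues. The only place demanding care is tracking the multiplicity of the top eigenvalue $kn$ as the join is built up one factor at a time (it increases by one with each new factor), but this is precisely the bookkeeping already done for $k=2$ in the proof of Theorem 4.11, and this is the single step I expect to be the main (if mild) obstacle.

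Next I would compute the average degree. The graph $G_1\vee\cdots\vee G_k$ has $kn$ vertices and $km+\binom{k}{2}n^2$ edges (the edges inside the factors plus all edges between distinct factors), so $\dfrac{2m'}{n'}=\dfrac{2m}{n}+(k-1)n$. Substituting into $LE=\sum\bigl|\,\text{eigenvalue}-\tfrac{2m'}{n'}\,\bigr|$, the shift $(k-1)n$ cancels against the $(k-1)n$ in each term $(k-1)n+\mu_j^{(i)}$, leaving $\bigl|\mu_j^{(i)}-\tfrac{2m}{n}\bigr|$; summing over $j=1,\dots,n-1$ gives $LE(G_i)-\tfrac{2m}{n}$, since $\mu_n^{(i)}=0$. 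The contributions of the eigenvalue $kn$ (with multiplicity $k-1$) and of $0$ are $(k-1)\bigl|n-\tfrac{2m}{n}\bigr|$ and $\tfrac{2m}{n}+(k-1)n$ respectively, and here one uses $m\le\binom{n}{2}<\tfrac{n^2}{2}$, so that $n-\tfrac{2m}{n}>0$ and the absolute value drops. Collecting terms yields the closed formula
\[
LE(G_1\vee\cdots\vee G_k)=\sum_{i=1}^{k}LE(G_i)+2(k-1)n-\frac{4(k-1)m}{n},
\]
which for $k=2$ recovers the identity $LE(G_1\vee G_2)=LE(G_1)+LE(G_2)+2n-\tfrac{4m}{n}$ of Theorem 4.11.

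Finally, the right-hand side of this formula depends on the graphs only through $n$, $m$, and the individual Laplacian energies $LE(G_i)$. Since each $G_i'$ also has $n$ vertices and $m$ edges, the same formula holds with every $G_i$ replaced by $G_i'$; and $LE(G_i)=LE(G_i')$ for each $i$ because $(G_i,G_i')$ is an $L$-equienergetic pair. Hence $\sum_{i=1}^{k}LE(G_i)=\sum_{i=1}^{k}LE(G_i')$, and therefore $LE(G_1\vee\cdots\vee G_k)=LE(G_1'\vee\cdots\vee G_k')$. Beyond the multiplicity bookkeeping in the spectrum of the iterated join, the argument is a routine substitution, so no further difficulty is anticipated.
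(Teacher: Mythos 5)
Your proposal is correct and follows essentially the same route as the paper: determine the $L$-spectrum of the $k$-fold join (the values $(k-1)n+\mu_j^{(i)}$ for $1\le j\le n-1$, the eigenvalue $kn$ with multiplicity $k-1$, and $0$), compute the average degree $\tfrac{2m}{n}+(k-1)n$, and arrive at the same closed formula $LE(G_1\vee\cdots\vee G_k)=\sum_{i=1}^{k}LE(G_i)+2n(k-1)-\tfrac{4(k-1)m}{n}$, from which the conclusion is immediate. Your explicit justification for dropping the absolute value in $(k-1)\bigl|n-\tfrac{2m}{n}\bigr|$ via $m\le\binom{n}{2}$ is a small point the paper leaves implicit, but the argument is otherwise the same.
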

\begin{proof}
Let $0=\mu_{ni}\leq\mu_{(n-1)i}\leq\cdots\mu_{1i}$ and $0=\mu_{ni}^{\prime}\leq\mu_{(n-1)i}^{\prime}\leq\cdots\mu_{1i}^{\prime}$ be the $L$-spectra of the graphs $G_i$ and $G_{i}^{\prime}$, respectively. Then applying Lemma 1.3 repeatedly we find that the $L$-spectra of the graphs $(G_1\vee G_2\vee \cdots \vee G_k)$ and $(G_{1}^{\prime}\vee
G_{2}^{\prime} \vee \cdots \vee G_{k}^{\prime})$ are
$kn ((k-1)-times), n(k-1)+\mu_{1i}, \cdots, n(k-1)+\mu_{(n-1)i}, 0$ and
$kn ((k-1)-times), n(k-1)+\mu_{1i}^{\prime}, \cdots, n(k-1)+\mu_{(n-1)i}^{\prime}, 0,$ respectively,
with average vertex degree $\dfrac{2m^{\prime}}{n^{\prime}}=\dfrac{2m+n^2(k-1)}{n}.$\\
Therefore,
\begin{align*}
&LE((G_1\vee G_2\vee \cdots \vee G_k))=(k-1)|kn-\dfrac{(k-1)2m+n^2}{n}|
\\&+\sum\limits_{i=1}^{k}\sum\limits_{j=1}^{n}|n(k-1)+\mu_{ji}-\dfrac{2m+n^2(k-1)}{n}|+|0-\dfrac{2m+n^2(k-1)}{n}|\\&
=(k-1)(kn-\dfrac{2m+n^2(k-1)}{n})+\sum\limits_{i=1}^{k}(LE(G_i)-\dfrac{2m}{n})+\dfrac{2m+n^2(k-1)}{n}\\&
=\sum\limits_{i=1}^{k}LE(G_i)+2n(k-1)-(2k-2)\dfrac{2m}{n}.
\end{align*}
Also,
\begin{align*}
&LE((G_{1}^{\prime}\vee
G_{2}^{\prime} \vee \cdots \vee G_{k}^{\prime}))=(k-1)|kn-\dfrac{2m+n^2(k-1)}{n}|
\\&+\sum\limits_{i=1}^{k}\sum\limits_{j=1}^{n}|n(k-1)+\mu_{ji}^{\prime}-\dfrac{2m+n^2(k-1)}{n}|+|0-\dfrac{2m+n^2(k-1)}{n}|\\&
=\sum\limits_{i=1}^{k}LE(G_{i}^{\prime})+2n(k-1)-(2k-2)\dfrac{2m}{n}.
\end{align*}
Clearly, the result follows.
\end{proof}
\indent Using the Kronecker product of graphs, we now give another construction of sequences of $L$-equienergetic ($Q$-equienergetic) graphs from a given pair of connected graphs.\\
\begin{theorem}
Let $G_1$ and $G_2$ be two connected graphs having $L$-spectra respectively $0=\mu_n<\mu_{n-1}\leq\cdots\leq\mu_1$ and $0=\lambda_n<\lambda_{n-1}\leq\cdots\leq\lambda_1$.
For a positive integer $p\geq n$ such that\\
$$\frac{2m}{p+n}<\min(\mu_{n-1}, \lambda_{n-1}),$$ we have $LE(\tilde{G_1})$=$LE(\tilde{G_2})$, where $\tilde{G_i}=(G_i\vee \bar{K_p})\otimes K_p$, $i=1, 2$.
\end{theorem}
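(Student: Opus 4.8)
The plan is to follow the pattern of Theorems 4.1 and 4.5--4.8: write down the $L$-spectrum of $\tilde{G_i}$ from the preliminary lemmas, determine the sign of every eigenvalue against the average degree $\frac{2m'}{n'}$, and then observe that, once those signs are fixed, $LE(\tilde{G_i})$ involves the spectrum of $G_i$ only through $\sum_{k=1}^{n-1}\mu_k=2m$, which is the same for $G_1$ and $G_2$.

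First I would record the $L$-spectrum of $H_i:=G_i\vee\bar{K_p}$. Since the $L$-spectrum of $\bar{K_p}$ is $0$ with multiplicity $p$, Lemma 2.3 gives (this is exactly the list $\gamma_i$ appearing in the proof of Theorem 4.8) the eigenvalues $p+\mu_1,\dots,p+\mu_{n-1}$, one copy of $p+n$, $n$ with multiplicity $p-1$, and one copy of $0$. The $L$-spectrum of $K_p$ is $p$ with multiplicity $p-1$ together with a single $0$. Applying the Kronecker-product lemma (Lemma 1.2), the $L$-spectrum of $\tilde{G_i}=H_i\otimes K_p$ is the family of products: $p(p+\mu_k)$ with multiplicity $p-1$ for $k=1,\dots,n-1$; $p(p+n)$ with multiplicity $p-1$; $np$ with multiplicity $(p-1)^2$; and $0$ with multiplicity $n+2p-1$. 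The total is $p(n+p)=n'$, as it must be; counting edges ($|E(A\otimes B)|=2|E(A)|\,|E(B)|$, with $|E(H_i)|=m+np$ and $|E(K_p)|=\binom{p}{2}$) gives $m'=p(p-1)(np+m)$, hence $\frac{2m'}{n'}=\frac{2(p-1)(np+m)}{n+p}$ (equivalently, sum the listed eigenvalues and equate with $2m'$).

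The substance is the sign analysis. For $1\le k\le n-1$ we have $\mu_k\ge\mu_{n-1}>\frac{2m}{p+n}$, so
\[p(p+\mu_k)-\frac{2m'}{n'}\;>\;p\Bigl(p+\tfrac{2m}{p+n}\Bigr)-\frac{2(p-1)(np+m)}{n+p}\;=\;\frac{p^2(p-n)+2np+2m}{n+p}\;\ge\;0,\]
the last step using $p\ge n$; the same bound holds with $\lambda_k$ in place of $\mu_k$. For completeness $p(p+n)-\frac{2m'}{n'}>0$ too (it reduces to $p^3+pn^2+2np\ge 2m(p-1)$, true because $2m\le n(n-1)<n^2$), but the exact signs of $p(p+n)$, $np$ and $0$ relative to $\frac{2m'}{n'}$ are immaterial: these three eigenvalues, with multiplicities $p-1$, $(p-1)^2$, $n+2p-1$, are identical for $\tilde{G_1}$ and $\tilde{G_2}$, so their total contribution to $LE$, namely $(p-1)\bigl|p(p+n)-\frac{2m'}{n'}\bigr|+(p-1)^2\bigl|np-\frac{2m'}{n'}\bigr|+(n+2p-1)\frac{2m'}{n'}$, is the same for both graphs.

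Finally, since $p(p+\mu_k)-\frac{2m'}{n'}\ge 0$ for all $k$, the remaining part of $LE(\tilde{G_1})$ is
\[(p-1)\sum_{k=1}^{n-1}\Bigl(p(p+\mu_k)-\frac{2m'}{n'}\Bigr)=(p-1)\Bigl(p^2(n-1)+2mp-(n-1)\frac{2m'}{n'}\Bigr),\]
using $\sum_{k=1}^{n-1}\mu_k=2m$; the identical formula for $\tilde{G_2}$ follows from $\sum_{k=1}^{n-1}\lambda_k=2m$. Adding the two pieces yields $LE(\tilde{G_1})=LE(\tilde{G_2})$. The one genuine difficulty is the inequality $p(p+\mu_k)\ge\frac{2m'}{n'}$, i.e.\ its reduction to $p^2(p-n)+2np+2m\ge 0$; this is exactly where the hypotheses $p\ge n$ and $\frac{2m}{p+n}<\min(\mu_{n-1},\lambda_{n-1})$ are used, and once it is secured the rest is bookkeeping with multiplicities. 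As with the earlier theorems, every integer larger than the least admissible $p$ is again admissible, so one obtains a whole sequence of $L$-equienergetic pairs $(\tilde{G_1},\tilde{G_2})$.
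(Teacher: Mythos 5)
Your proposal is correct and follows essentially the same route as the paper: the same spectra of $G_i\vee\bar{K_p}$ and $K_p$, the same average degree $\frac{2m'}{n'}=\frac{2(p-1)(m+np)}{p+n}$, and the same key inequality $p(p+\mu_k)-\frac{2m'}{n'}\ge 0$ derived from $\mu_k>\frac{2m}{p+n}$ and $p\ge n$. You merely spell out the final bookkeeping that the paper dismisses as ``easy to see,'' including the tidy remark that the eigenvalues $p(p+n)$, $np$, $0$ are common to both graphs so their signs are immaterial, and that the $\mu$-dependent part enters only through $\sum_{k=1}^{n-1}\mu_k=2m$.
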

\begin{proof}
The $L$-spectra of the graphs $G_1\vee \bar{K_p}$ and $G_2\vee \bar{K_p}$ are respectively
$\gamma_1, \gamma_2, \cdots, \gamma_{p+n}$ and $\theta_1, \theta_2, \cdots, \theta_{p+n}$, where
$$\gamma_i=\left\{\begin{array}{lr}p+\mu_1, &\mbox{if $1\leq i \leq {n-1}$};\\
p+n, &\mbox{if $i=n$};\\
n, &\mbox{if ${n+1}\leq i \leq{p+n-1}$};\\
0, &\mbox{if $i=p+n$}.
\end{array} \right.$$
and
$$\theta_i=\left\{\begin{array}{lr}p+\lambda_i, &\mbox{if $1\leq i \leq{n-1}$};\\
p+n, &\mbox{if $i=n$};\\
n, &\mbox{if $n+1\leq i \leq{p+n-1}$};\\
0, &\mbox{if $i=p+n$}.
\end{array} \right.$$
Also, $L$-spectra of $K_p$ is $p=\tau_1=\tau_2=\cdots=\tau_{p-1}, 0=\tau_p.$
So the $L$-spectra of $\tilde{G_1}$ and $\tilde{G_2}$ is
$\gamma_i\tau_j$ and $\theta_i\tau_j$, where $1\leq i \leq{p+n}, 1\leq j \leq p,$
with average vertex degree $\dfrac{2m^{\prime}}{n^{\prime}}=\dfrac{2m(p-1)}{n+p}+\dfrac{2pn(p-1)}{p+n}.$
For $i=1, 2, \cdots, n-1$, we have
$$p(p+\mu_i)-\dfrac{2m^{\prime}}{n^{\prime}}=p\mu_i-\dfrac{2m(p-1)}{p+n}+\dfrac{p(p^2-pn+2n)}{p+n}\geq 0.$$
Similarly $p(p+\lambda_i)-\dfrac{2m^{\prime}}{n^{\prime}}\geq 0.$
\indent It is now easy to see that $LE(\tilde{G_1})$=$LE(\tilde{G_2})$.
\end{proof}
\indent Theorems 4.14 and 4.15 are concerned with the construction of disconnected graphs having same $L$-energy ($Q$-energy) from a given pair of connected graphs.
\begin{theorem}
Let $G_1$ and $G_2$ be two connected graphs having $L$-spectra respectively $0=\mu_n<\mu_{n-1}\leq\cdots\leq\mu_1$ and $0=\lambda_n<\lambda_{n-1}\leq\cdots\leq\lambda_1$.
For a positive integer $p$ such that
$$\frac{2m}{p+n}<\min(\mu_{n-1}, \lambda_{n-1}),$$ we have $LE(\tilde{G_1})$=$LE(\tilde{G_2})$, where $\tilde{G_i}=(G_i\cup \bar{K_p})\otimes K_p$, $i=1, 2$.
\end{theorem}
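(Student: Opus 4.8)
The plan is to mimic the pattern of Theorems 4.8 and 4.13: write down the $L$-spectrum of $\tilde G_i$ explicitly using Lemmas 2.4 and the Kronecker-product lemma, determine the sign of each eigenvalue minus the average degree $\tfrac{2m'}{n'}$, and then observe that $LE(\tilde G_i)$ depends on $G_i$ only through $n$ and $m$.

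First I would record the ingredient spectra. By Lemma 2.4 the $L$-spectrum of $G_1\cup\bar K_p$ is $\mu_1,\dots,\mu_{n-1}$ together with $0$ with multiplicity $p+1$, and similarly $\lambda_1,\dots,\lambda_{n-1}$ (plus $0$ with multiplicity $p+1$) for $G_2\cup\bar K_p$. The $L$-spectrum of $K_p$ is $p$ with multiplicity $p-1$ and a single $0$. Applying the Kronecker-product lemma (the product of eigenvalues), the $L$-spectrum of $\tilde G_1=(G_1\cup\bar K_p)\otimes K_p$ consists of $p\mu_i$, $i=1,\dots,n-1$, each with multiplicity $p-1$, and $0$ with multiplicity $(n-1)+(p+1)(p-1)+(p+1)=n-1+p+p^2$; the total is $(n-1)(p-1)+(n-1+p+p^2)=p(n+p)$, as required. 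The spectrum of $\tilde G_2$ is the same with $\mu_i$ replaced by $\lambda_i$. For the average degree, since a Kronecker product has $2|E(H)|\,|E(K)|$ edges, $\tilde G_i$ has $2m\binom p2=mp(p-1)$ edges on $p(n+p)$ vertices, so $\tfrac{2m'}{n'}=\tfrac{2m(p-1)}{n+p}$.

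Next comes the sign analysis, which is where the hypothesis is used. For $1\le i\le n-1$ we have $\mu_i\ge\mu_{n-1}>\tfrac{2m}{n+p}$, hence
\[
p\mu_i-\tfrac{2m'}{n'}\ \ge\ p\mu_{n-1}-\tfrac{2m(p-1)}{n+p}\ >\ \tfrac{2mp}{n+p}-\tfrac{2m(p-1)}{n+p}\ =\ \tfrac{2m}{n+p}\ >\ 0 ,
\]
so every nonzero eigenvalue exceeds $\tfrac{2m'}{n'}$, while $0-\tfrac{2m'}{n'}<0$; the same bounds hold for $\tilde G_2$ with $\lambda_i$. Consequently
\[
LE(\tilde G_1)=(p-1)\sum_{i=1}^{n-1}\Big(p\mu_i-\tfrac{2m'}{n'}\Big)+(n-1+p+p^2)\,\tfrac{2m'}{n'},
\]
and substituting $\sum_{i=1}^{n-1}\mu_i=2m$ turns this into $2mp(p-1)+\big[(n-1+p+p^2)-(p-1)(n-1)\big]\tfrac{2m'}{n'}$, an expression in $m,n,p$ alone. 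The identical computation for $\tilde G_2$ yields the same value, so $LE(\tilde G_1)=LE(\tilde G_2)$.

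I do not expect a serious obstacle here; it is essentially bookkeeping. The two points that need care are (i) getting the multiplicity of the zero eigenvalue in the Kronecker product right, since $\tilde G_i$ is disconnected with a large kernel, and (ii) checking that the stated inequality on $\mu_{n-1},\lambda_{n-1}$ is exactly what is needed to keep all nonzero eigenvalues above $\tfrac{2m'}{n'}$ (so that no absolute-value signs flip). Once that is in place, the dependence on the individual eigenvalues collapses to their sum $2m$, and the equality of the two Laplacian energies is immediate.
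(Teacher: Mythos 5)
Your proposal is correct and follows essentially the same route as the paper: determine the $L$-spectrum of $(G_i\cup\bar K_p)\otimes K_p$ as $p\mu_i$ (each with multiplicity $p-1$) plus $0$ with multiplicity $p^2+p+n-1$, compute the average degree $\tfrac{2m(p-1)}{n+p}$, verify via the hypothesis that all nonzero eigenvalues exceed it, and conclude since the energy then depends only on $\sum_i\mu_i=2m$, $n$ and $p$. The only difference is that you carry out the final summation explicitly where the paper ends with ``it is now easy to see,'' so your write-up is, if anything, more complete.
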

\begin{proof}
The $L$-spectra of the graphs $\tilde{G_1}$ and $\tilde{G_2}$ are respectively given as
$p\mu_1, p\mu_2, \cdots, p\mu_{n-1} (each (p-1)-times), 0 ((p^2+p+n-1)-times)$ and
$p\lambda_1, p\lambda_2, \cdots, p\lambda_{n-1} (each (p-1)-times), 0 ((p^2+p+n-1)-times),$
with average vertex degree $\dfrac{2m^{\prime}}{n^{\prime}}=\dfrac{2m(p-1)}{p+n}.$
So, for $i=1, 2, \cdots, n-1$, we have
$$p\mu_i-\dfrac{2m^{\prime}}{n^{\prime}}\geq 0$$ and
$$p\lambda_i-\dfrac{2m^{\prime}}{n^{\prime}}\geq 0.$$
Now, it is easy to see that $LE(\tilde{G_1})$=$LE(\tilde{G_2})$.
\end{proof}
\begin{theorem}
Let $G_1$ and $G_2$ be two connected graphs having $Q$-spectra respectively $0<\mu^+_n<\mu^+_{n-1}\leq\cdots\leq\mu^+_1$ and $0<\lambda^+_n<\lambda^+_{n-1}\leq\cdots\leq\lambda^+_1$.
For a positive integer $p> n$ such that  $$\frac{2m}{p+n}<\min(\mu^+_{n}, \lambda^+_{n}),$$ we have $LE^+((G_1\cup\bar{K_p})\otimes K_p)=LE^+((G_2\cup\bar{K_p})\otimes K_p)$.
\end{theorem}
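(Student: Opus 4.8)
The plan is to mimic the proof of Theorem 4.14, replacing the Laplacian data of $K_p$ by its signless Laplacian data. Since the signless Laplacian matrix of $K_p$ equals $(p-2)I+J$, where $J$ is the all-ones matrix of order $p$, the $Q$-spectrum of $K_p$ is $2p-2$ (once) and $p-2$ with multiplicity $p-1$, while the $Q$-spectrum of $\bar{K_p}$ is $0$ with multiplicity $p$. By Lemma 2.4 (union of spectra), the $Q$-spectrum of $G_1\cup\bar{K_p}$ is $\mu^+_1,\mu^+_2,\dots,\mu^+_n$ together with $0$ taken $p$ times, and likewise for $G_2$ with $\lambda^+_i$ in place of $\mu^+_i$. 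Hence, by Lemma 2.2, the $Q$-spectrum of $\tilde{G_1}=(G_1\cup\bar{K_p})\otimes K_p$ consists of all products of these two lists: the values $(2p-2)\mu^+_i$, $i=1,\dots,n$, each once; the values $(p-2)\mu^+_i$, $i=1,\dots,n$, each with multiplicity $p-1$; and $0$ with multiplicity $p+p(p-1)=p^2$. The $Q$-spectrum of $\tilde{G_2}$ is obtained by the same recipe from the $\lambda^+_i$.

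Next I would compute the average vertex degree of $\tilde{G_i}$. Since $\sum_{i=1}^{n}\mu^+_i=2m$ and $(2p-2)+(p-1)(p-2)=p(p-1)$, the sum of the $Q$-eigenvalues of $\tilde{G_i}$ equals $2m\,p(p-1)$; as $\tilde{G_i}$ has $p(p+n)$ vertices, this gives
$$\frac{2m^{\prime}}{n^{\prime}}=\frac{2m(p-1)}{p+n}.$$
The crucial point is that this value depends only on $m$, $n$ and $p$, so it is the same for $\tilde{G_1}$ and $\tilde{G_2}$ (indeed it is the same expression that appears in Theorem 4.14).

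Finally I would settle the signs of the deviations from the average. For the eigenvalues $(2p-2)\mu^+_i$ this is immediate: since $\mu^+_i\geq\mu^+_n>\frac{2m}{p+n}$, we get $(2p-2)\mu^+_i>\frac{4m(p-1)}{p+n}=2\cdot\frac{2m^{\prime}}{n^{\prime}}>\frac{2m^{\prime}}{n^{\prime}}$, and the analogous estimate holds for the $\lambda^+_i$. The delicate step --- which I expect to be the main obstacle --- is to show that every $(p-2)\mu^+_i$ (and every $(p-2)\lambda^+_i$) is also at least $\frac{2m^{\prime}}{n^{\prime}}=\frac{2m(p-1)}{p+n}$, equivalently $\mu^+_i\geq\frac{p-1}{p-2}\cdot\frac{2m}{p+n}$; this is exactly where the hypotheses $p>n$ and $\frac{2m}{p+n}<\mu^+_n$ are used, and one has to make sure they suffice to absorb the factor $\frac{p-1}{p-2}$ (tightening the gap condition slightly, or taking $p$ large, if necessary). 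Granting this, the only $Q$-eigenvalues of $\tilde{G_i}$ lying below the average are the $p^2$ copies of $0$, so, using the identity $\sum_{j}\bigl(\nu_j-\frac{2m^{\prime}}{n^{\prime}}\bigr)=0$, the total positive deviation equals the total negative deviation $p^2\cdot\frac{2m^{\prime}}{n^{\prime}}$. Therefore
$$LE^+(\tilde{G_i})=2p^2\cdot\frac{2m^{\prime}}{n^{\prime}}=\frac{4mp^2(p-1)}{p+n}\qquad(i=1,2),$$
and since the right-hand side does not depend on which of $G_1,G_2$ we started from, $LE^+((G_1\cup\bar{K_p})\otimes K_p)=LE^+((G_2\cup\bar{K_p})\otimes K_p)$.
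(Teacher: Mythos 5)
Your route is exactly the paper's: its proof of this theorem is a one-line reduction to Theorem 4.14 together with the $Q$-spectrum of $K_p$, and your spectrum computation for $(G_i\cup\bar{K_p})\otimes K_p$, the average degree $\frac{2m(p-1)}{p+n}$, and the trace-balance conclusion $LE^+(\tilde{G_i})=2p^2\cdot\frac{2m^{\prime}}{n^{\prime}}$ are precisely the intended steps. The problem is the step you flag and then only ``grant'': that every $(p-2)\mu^+_i$ (and every $(p-2)\lambda^+_i$) lies above the average. This amounts to $\mu^+_i\geq\frac{p-1}{p-2}\cdot\frac{2m}{p+n}$, and since $\frac{p-1}{p-2}>1$ it is not implied by the stated hypothesis $\frac{2m}{p+n}<\min(\mu^+_n,\lambda^+_n)$; the assumption $p>n$ does not help, because nothing bounds from below the gap between $\mu^+_n$ and $\frac{2m}{p+n}$. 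Nor is the step cosmetic: if $\frac{2m}{p+n}<\mu^+_n<\frac{p-1}{p-2}\cdot\frac{2m}{p+n}$ while $\lambda^+_n$ clears the larger threshold, the $p-1$ copies of $(p-2)\mu^+_n$ fall below the average and add $(p-1)\bigl(\frac{2m^{\prime}}{n^{\prime}}-(p-2)\mu^+_n\bigr)$ to $LE^+(\tilde{G_1})$ but not to $LE^+(\tilde{G_2})$, so the two energies produced by this argument need not coincide; the sign claim is where the theorem's content lives.

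Note the contrast with Theorem 4.14, where the nonzero $L$-eigenvalue of $K_p$ is $p$ and the needed inequality $\mu_i\geq\frac{p-1}{p}\cdot\frac{2m}{p+n}$ does follow from the hypothesis because $\frac{p-1}{p}<1$, and with Theorem 4.2, where the authors strengthen the hypothesis to $\frac{2m}{p+n}<\min(\mu^+_n-1,\lambda^+_n-1)$ for precisely this kind of reason. To make your argument complete you must either strengthen the assumption, e.g. to $\frac{p-1}{p-2}\cdot\frac{2m}{p+n}<\min(\mu^+_n,\lambda^+_n)$, or restrict to $p$ large enough that $(p-2)\min(\mu^+_n,\lambda^+_n)\geq\frac{2m(p-1)}{p+n}$. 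The paper's own proof silently skips this point, so your write-up is no less rigorous than the original; but as it stands the proof is conditional on an inequality that the stated hypotheses do not supply.
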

\begin{proof}
The proof follows from Theorem 4.14 and the fact that the $Q$-spectra of $K_p$ is $2p-2, p-2 ((p-1)-times)$.
\end{proof}

\end{document}